\numberwithin{equation}{section}
\theoremstyle{plain}
\newtheorem*{theorem*}{Theorem}
\newtheorem{theorem}{Theorem}
\newtheorem{lemma}{Lemma}
\newtheorem{proposition}{Proposition}
\newtheorem{corollary}{Corollary}
\theoremstyle{remark}
\newtheorem{remark}{Remark}
\def\argmax{\mathop{\rm arg\, max}}
\def\Re{\mathop{{\rm I}\kern-.2em\hbox{\rm R}}\nolimits}
\def\trace{\hbox{trace}}
\def\mathbold{\boldsymbol}
\def\bP{\mathbold{P}}
\def\hbP{{\widehat{\mathbold{P}}}}
\def\matA{{\rm \textbf{A}}}
\def\matD{{\rm \textbf{D}}}\def\scrF{{\mathscr F}}\def\bbE{\mathbb{E}}\def\bbEn{\mathbb{E}_{n}}
\def\matG{{\rm \textbf{G}}}
\def\matH{{\rm \textbf{H}}}\def\hbar{\overline{h}}\def\matM{{\rm \textbf{M}}}
\def\calO{{\cal O}}\def\bbP{\mathbb{P}}\def\matR{{\rm \textbf{R}}}\def\bbR{\mathbb{R}}\def\bbS{\mathbb{S}}\def\matT{{\rm \textbf{T}}}\def\matU{{\rm \textbf{U}}}\def\bu{\mathbold{u}}
\def\matW{{\rm \textbf{W}}}\def\bv{\mathbold{v}}\def\matV{{\rm \textbf{V}}}\def\bv{\mathbold{v}}\def\bX{\mathbold{X}}\def\bx{\mathbold{x}}\def\matX{{\rm \textbf{X}}}
\def\by{\mathbold{y}}\def\bY{\mathbold{Y}}\def\matY{{\rm \textbf{Y}}}
\def\matZ{{\rm \textbf{Z}}}
\def\bzero{\mathbold{0}}
\def\bDelta{\mathbold{\Delta}}\def\epsa{\epsilon}
\def\btheta{\mathbold{\theta}}
\def\hbtheta{{\widehat{\btheta}}}\def\bSigma{\mathbold{\Sigma}}
\def\Sigmahat{\widehat{\Sigma}}
\def\hbSigma{{\widehat{\bSigma}}}\def\tbSigma{{\widetilde{\bSigma}}}
\def\htau{\widehat{\tau}}\def\hrho{\widehat{\rho}}
\def\gbar{{\overline g}}\def\eps{\epsilon}\def\tbSigmas{{\hbSigma^s}}
\def\tbSigmas{\tbSigma^{s}}\def\hmatR{{\widehat\matR}}\def\hmatT{{\widehat\matT}}
\def\Sd{{\bbS^{d-1}}}\def\buj{{\bu_j}}\def\pa{\partial}
\newcommand{\bel}{\begin{eqnarray}\label}
\newcommand{\eel}{\end{eqnarray}}
\newcommand{\bes}{\begin{eqnarray*}}
\newcommand{\ees}{\end{eqnarray*}}
\newcommand{\norm}[2]{\|#1\|_{#2}}\def\hbSigmat{\hbSigma^{\tau}}
\def\hbSigmar{\hbSigma^{\rho}}
\def\Var{\hbox{Var}}
\def\bU{\mathbold{U}}\def\bV{\mathbold{V}}
\def\bZ{\mathbold{Z}}
\newcommand{\Deltajkt}{\Delta^{\tau}_{jk}}\newcommand{\Deltajkr}{\Delta^{\rho}_{jk}}
\newcommand{\bDeltar}{\bDelta^{\rho}}\newcommand{\bDeltat}{\bDelta^{\tau}}
\newcommand{\Sigjk}{\Sigma_{jk}}
\newcommand{\hbthetaspt}{\hbtheta^{\tau}_{1;s}}
\newcommand{\hbthetaspr}{\hbtheta^{\rho}_{1;s}}
\newcommand{\hSigjkr}{\Sigmahat^{\rho}_{jk}}\def\hSigma{\widehat{\Sigma}}
\newcommand{\Unjk}{U_{n;jk}}\newcommand{\bDelzero}{\bDelta_{n}^{(0)}}
\newcommand{\bDelone}{\bDelta_{n}^{(1)}}
\def\Re{{\mathbb{R}}}
\def\bW{\mathbold{W}}
\newcommand{\hbarzero}[1]{\overline{h}_{0}\left({#1}\right)}
\def\hbSigmatt{\hbSigma_{(k)}^{\tau-taper}}
\def\hbSigmart{\hbSigma_{(k)}^{\rho-taper}}
\def\hmatR{\widehat{\matR}}
\def\hmatT{\widehat{\matT}}\def\buj{\bu_{(j)}}
\def\Sd{\bbS^{d-1}}\newcommand{\fu}[1]{f_{\bu}\left({#1}\right)}
\def\sgn{\hbox{\rm sgn}}
\def\scrA{{\mathscr A}}
\begin{document}

\begin{frontmatter}
\title{MULTIVARIATE ANALYSIS OF NONPARAMETRIC\\ ESTIMATES OF LARGE CORRELATION MATRICES}
\runtitle{Nonparametric Correlation Matrix Convergence}

\begin{aug}
\author{\fnms{Ritwik} \snm{Mitra}\ead[label=e1]{ritwikm@eden.rutgers.edu}}
\address{Ritwik Mitra\\
Department of Statistics\\ and Biostatistics\\
Hill Center, Busch Campus\\
Rutgers University\\
Piscataway, New Jersey 08854\\
USA
\printead{e1}}
\and
\author{\fnms{Cun-Hui} \snm{Zhang}\ead[label=e2]{czhang@stat.rutgers.edu}}
\address{Cun-Hui Zhang\\
Department of Statistics\\ and Biostatistics\\
Hill Center, Busch Campus\\
Rutgers University\\
Piscataway, New Jersey 08854\\
USA
\printead{e2}}
\runauthor{R. Mitra \& C.-H. Zhang}
\affiliation{Rutgers University}

\end{aug}

\begin{abstract}
We study concentration in spectral norm of nonparametric estimates of correlation matrices. 
We work within the confine of a Gaussian copula model. 
Two nonparametric estimators of the correlation matrix, 
the sine transformations of the Kendall's tau and Spearman's rho correlation coefficient, are studied. 
Expected spectrum error bound is obtained for both the estimators. 
A general large deviation bound for the maximum spectral error of a collection of 
submatrices of a given dimension is also established. 
These results prove that when both the number of variables and sample size are large, 
the spectral error of the nonparametric estimators is of no greater order than that of the latent sample 
covariance matrix, at least when compared with some of the sharpest known error bounds for the later. 
As an application, we establish the minimax optimal convergence rate in the estimation of high-dimensional 
bandable correlation matrices via tapering off of these nonparametric estimators. 
An optimal convergence rate for sparse principal component analysis is also established 
as another example of possible applications of the main results.  
\end{abstract}

\begin{keyword}[class=MSC]
\kwd[Primary ]{62H12}
\kwd{62G05}
\kwd[; secondary ]{62G20}
\end{keyword}

\begin{keyword}
\kwd{Bandable}
\kwd{Correlation matrix}
\kwd{Gaussian copula}
\kwd{High dimension}
\kwd{Hoeffding decomposition}
\kwd{Kendall's tau}
\kwd{Nonparametric}
\kwd{Sparse PCA}
\kwd{Spearman's rho}
\kwd{Spectral norm}
\kwd{Tapering}
\kwd{U-statistics}
\end{keyword}

\end{frontmatter}

\section{Introduction}
We consider $n$ iid copies $\{\bX_{i}:1\leq i\leq n\}$ of a $d$-dimensional Gaussian random vector 
$(X_1,\ldots,X_d)^T$. 
We define $\matX = (\bX_{1},\cdots,\bX_{n})^{T} \in \Re^{n\times d}$. 
We assume that $\bX_{i}$'s are centered and marginally scaled, so that $\bbE\bX=\bzero$ and the correlation 
matrix is given by $\bbE\bX\bX^{T}/n=\bSigma\in \Re^{d\times d}$ with 1 in the diagonal. 
In this paper, we work within a high-dimensional `double asymptotic' setting where $d\wedge n\rightarrow \infty$. 
We assume that instead of $\bX$, we only observe $n$ iid copies $\bY_{i}, 1\leq i\leq n$, of the transformed variables
\begin{align*}
(f_{1}(X_{1}), \cdots, f_{d}(X_{d}))^{T}
\end{align*}
where $f_{i}$'s are unknown but strictly increasing. This is a form of the copula model \citep{Sklar59} for the 
distribution of the data. Because $\bX$ follows a Gaussian distribution, it is a formulation of the Gaussian copula, 
cf. \cite{BKRW93} and references therein. A slightly different but equivalent formulation of the Gaussian copula 
has been referred to as the \emph{nonparanormal} model \citep{LLW09}. 
Let $\matY = (\bY_{1},\cdots,\bY_{n})^{T}$. 
Our goal here is to estimate the latent correlation structure $\bSigma$ using the observed data matrix $\matY$. 

If we could observe the latent data matrix $\matX$, an obvious choice as an estimator would be the sample correlation matrix given by $\tbSigmas=\matX^{T}\matX/n$. 
It is for this reason that we refer to the latent $\tbSigmas$ as an oracle estimator. 
It is also clear that $\tbSigmas$ is a sufficient statistic for estimating $\bSigma$ when $\matX$ is known. As a consequence, any statistical procedure based on $\bSigma$ could be summarily described as  $g(\tbSigmas)$ for some function $g$. In this respect, $\tbSigmas$ possesses  great utility as an ideal raw estimate that lends itself to further analysis as the need be. 

However, as noted above, we do not observe $\matX$ but unknown strictly monotone 
transformations of columns of it, $\matY$. 
Thus the sample correlation matrix based on $\matY$, i.e. $\matY^{T}\matY/n$, 
is in general inconsistent in estimating the latent correlation structure $\bSigma$. 
Two candidate nonparametric estimators in such a scenario are considered in this paper: 
Kendall's tau developed in \cite{Kendall38} and Spearman's rank correlation coefficient, 
developed by Charles Spearman in 1904. 
These are two widely used nonparametric measures of association. 
Their properties in fixed dimension have been studied in \cite{Kendall38,Kendall48}, \cite{Kruskal58} and many others. 
More recently, in high-dimensional scenarios, correlation matrix estimators based on these measures 
have been taken up for study in \cite{LHYLW12} and \cite{XZ12} among others. 

For the rest of this paper, we call $\hbSigmat$ the correlation matrix estimator based on Kendall's tau 
and call $\hbSigmar$ the one based on Spearman's rho. 
It will be interesting to study whether for any statistical procedure, say $g(\tbSigmas)$, 
based on the raw estimate $\tbSigmas$,  it is possible to provide justification for the use of 
$g(\hbSigmat)$ or $g(\hbSigmar)$ as a viable replacement. 
It is however cumbersome to study each individual procedure separately. On the other hand, if $g$ is sufficiently smooth with respect to some matrix norm, it would suffice to study the accuracy of $\hbSigmat$ and $\hbSigmar$ 
as estimates of $\bSigma$ in such norms. 

A complete description of properties of $\hbSigmat$ and $\hbSigmar$ as estimators of large $\bSigma$ 
necessitates the derivation of the distributions of these matrix estimators. 
It is well known that in the multivariate Gaussian model, $\tbSigmas$ follows a Wishart distribution \citep{Anderson58}. 
To the contrary, derivation of the distribution of $\hbSigmat$ and $\tbSigmas$ seems at the present moment intractable. 
On the other hand, analysis of these nonparametric estimators for each individual element of the correlation matrix 
has been taken upon before. 
Both Kendall's tau and Spearman's rho are specific instances of U-statistics with bounded kernels. 
In \cite{Hoeff48}, the asymptotic normality of these nonparametric estimators for an individual correlation 
was established. 
Furthermore, the celebrated \cite{Hoeff63} inequality provides large deviation bounds for these 
estimators as U-statistics with bounded kernels. 
These results provide tools for studying the concentration of $\hbSigmat$ and $\hbSigmar$ 
in the matrix $\max$ norm and its applications \citep{LHYLW12,XZ12} and the corresponding 
Gaussian copula graphical model \citep{LHZ12}. 

It is important to note that while estimation accuracy in one specific matrix norm could be more appropriate for a certain set of statistical problems, some other set of problems might require accuracy in a different matrix norm. In this paper we focus on the spectral norm, which is also understood as the $\ell_{2}$ operator norm. 
Many statistical problems can be studied with error bounds in the spectral norm of estimated correlation matrices. 
A primary example is the principal component analysis (PCA) since the spectral norm is essential in studying the 
effects of matrix perturbation on eigenvalues and eigenvectors. 

Before beginning the study of convergence of $\hbSigmat$ and $\hbSigmar$ in the spectral norm, 
it is worthwhile to note that convergence rate of the latent sample covariance matrix $\tbSigmas$ 
in the spectral norm has been studied widely and established in a multitude of literature. 
A detailed overview and further references can be found in \cite{Ver10} among others. 
For example, one could derive, from the concentration inequality in Theorem II.13 of \cite{DSz01}, 
that for $\matX \in \Re^{n\times d}$ with iid $N(\bzero, \bSigma)$ rows, 
\begin{align}\label{eq:dsz}
\sqrt{\bbE\norm{\tbSigmas-\bSigma}{S}^{2}} \leq \norm{\bSigma}{S}\left(2\sqrt{2}\sqrt{d/n}
+\sqrt{2}d/n + 6(d/n^{3})^{1/4}\right), 
\end{align}
so that the consistency of $\tbSigmas$ follows when $d/n\to 0$. 
Additionally, the concentration inequality also provides a uniform bound on the spectral error for any 
$s$-dimensional diagonal submatrix for larger $d$. 
Taking any integer $s<d$ and sets $A\subset \{1,\cdots,d\}$, we have by the union bound 
\bel{eq:dsz1}
&& \max_{|A|\leq s}\norm{(\tbSigmas-\bSigma)_{A\times A}}{S}\Big/\max_{|A|\le s}\norm{\bSigma_{A\times A}}{S} 
\\ \nonumber &\le& \left(\sqrt{s/n}+\sqrt{2\big\{t+\log\hbox{${d\choose s}$}\big\}/n}\right)
\left(2+\sqrt{s/n}+\sqrt{2\big\{t+\log\hbox{${d\choose s}$}\big\}/n}\right)
\eel
with at least probability $1-2e^{-t}$. 
These spectral error bounds are explicit and of sharp order for the latent sample correlation matrix estimate 
$\tbSigmas$. In this light, it is apt to ask whether $\hbSigmat$ and $\hbSigmar$ also submit 
similar error bounds. 

In \cite{LH13a} a rate of $\sqrt{d\log d/n}$ was established for $\hbSigmat$ in a transelliptical 
family of distributions \citep{LHZ12}. 
In a separate but simultaneous work in \cite{WZ13} the same rate was established for $\hbSigmat$ 
in an elliptical copula correlation factor model, which can be also viewed as elliptical copula. 
In this paper, we provide non-asymptotic spectrum error bounds in the more restrictive Gaussian copula model  for both $\hbSigmat$ and $\hbSigmar$ which improve the convergence rates of these existing error bounds. 
In particular, we establish in Theorem \ref{th:GenUconc} expected spectral error bounds to match (\ref{eq:dsz}), 
and under mild conditions on the sample size, we establish in Theorem \ref{th:GenUconc2} and its corollaries 
large deviation bounds to match (\ref{eq:dsz1}). 
These results establish that in the Gaussian copula model the nonparametric estimators $\hbSigmat$ and $\hbSigmar$ 
perform as well as the oracle raw estimator $\tbSigmas$ in terms of the order of the spectral error. 
Consequently, a methodology based on $\tbSigmas$ that hinges on a spectrum error bound 
can be performed with the same rate of convergence if $\hbSigmat$ or $\hbSigmar$ are used in lieu 
of the latent $\tbSigmas$.

We discuss two different statistical problems where our results could be applied. 
The first, a ripe problem for application of spectral error bounds, 
is the estimation of a large bandable correlation matrix. 
For high-dimensional data, proper estimation of large bandable $\bSigma$ involves 
implementation of various regularization strategies such as banding, tapering, thresholding etc. 
These procedures and their properties have been studied in \cite{WuP03}, \cite{BL08a,BL08b}, 
\cite{Karoui08}, \cite{LamFan09}, \cite{cailiu11a}, \cite{CaiZhou12}, and \cite{CY12}.  
In particular, \cite{CZZ10} established the optimal minimax rate of convergence for a tapered version of 
$\tbSigmas$ for certain classes of unknown bandable $\bSigma$. 
In \cite{XZ12a}, a tapering estimator based on the Spearman's rank correlation was studied 
for the same class of parameters in the Gaussian copula model. 
However, the question of whether the nonparametric estimator
could attain the optimal rate, was not resolved in their paper.
Our spectral error bounds imply that the optimal rate is attained 
if one substitutes $\tbSigmas$ with either $\hbSigmat$ or $\hbSigmar$.

The second application involves error bounds in the estimation of the leading eigenvector in PCA 
both with and without a sparsity assumption on the eigenvector. 
With the advent and increasing prevalence of high dimensional data, 
various limitations of traditional procedures had come to the fore. 
For instance, \cite{JL09} showed that when $d/n \rightarrow c >0$, the principal component of $\tbSigmas$ 
is inconsistent in estimating the leading eigenvector of the true correlation matrix. 
Several remedies to this problem have been proposed, all being different formulations under the auspice of a 
general sparse PCA paradigm. 
In sparse PCA, the eigenvectors corresponding to the largest eigenvalues are assumed to be sparse. 
A vast array of sparse PCA approaches has been proposed and studied in 
\cite{JTU03}, \cite{ZHT06}, \cite{DALJL07}, \cite{VL12}, \cite{Ma13}, and \cite{CMW13a} among others. 
For the elliptical copula family, \cite{LH13a} established the optimal rate of convergence 
in sparse PCA with $\hbSigmat$ under an additional sign sub-Gaussian condition. 
We will demonstrate that our spectral error bounds for the nonparametric estimators can be 
directly applied to study the convergence rates for the principle component direction. 
In particular, for sparse PCA the minimax rate as described in \cite{VL12} will be established 
without imposing the sign sub-Gaussian condition. 

Our work is organized as follows. In Section \ref{sec:Background} we describe the Gaussian copula model 
and the Kendall's tau and Spearman's rho estimators for the correlation matrix. 
In Section \ref{sec:Especnorm}, we provide upper bounds for the expected spectral error 
for these two correlation-matrix estimators in Theorem \ref{th:GenUconc} and 
outline our analytical strategy. 
In Section \ref{sec:LargeDev}, we provides a general large deviation inequality in Theorem \ref{th:GenUconc2}. 
In Section \ref{sec:Application} we discuss two problems where our results on spectral norm concentration could be utilized. Some of the proofs are relegated to the Appendix.
\section{Background \& Preliminary Results}\label{sec:Background}
We describe the basic data model and define the nonparametric estimates of $\bSigma$. 

\subsection{Data Model and Notation}\label{subsec:datmodel}
 We consider the Gaussian copula or multivariate nonparametric transformational model 
\begin{equation}
(Y_{1},\cdots,Y_{d})^{T} = (f_{1}(X_{1}),\cdots, f_{d}(X_{d}))^{T}, 
 \label{eq:mod1}
\end{equation}
where $(X_{1},\cdots,X_{d})^{T} \in \Re^{d}$ is a multivariate Gaussian random vector with 
marginal $N(0,1)$ distribution and $f_{j}$ are unknown strictly increasing functions. 
We are interested in estimating the population correlation matrix of $(X_{1},\cdots,X_{d})^{T}$, denoted by 
\begin{equation}
 \bSigma=\bbE(X_{1},\cdots,X_{d})^{T}(X_{1},\cdots,X_{d}), 
 \label{eq:corr}
\end{equation}
based on a sample of iid copies of $(Y_{1},\cdots,Y_{d})^{T}$. 
Since the $f_{j}$ absorbs the location and scale of the individual $X_{j}$, 
it is natural to assume $\bbE X_{j}=0$ and $\bbE X_{j}^2=1$ on the marginal distribution. 

The observations $\bY_{i}= (Y_{i1},\cdots,Y_{id})^{T}$, $i=1,\cdots,n$, are iid copies of 
$(Y_{1},\cdots,Y_{d})^{T}$. They can be written as 
\begin{equation}
 Y_{ij} = f_{j}(X_{ij}) \quad i=1\cdots,n\quad j=1,\cdots, d, 
 \label{eq:mod2}
\end{equation}
where $\bX_{i}=(X_{i1},\cdots,X_{id})^{T}\in \Re^{d}$ are independent copies of 
$(X_{1},\cdots,X_{d})^{T}\sim N(\bzero,\bSigma)$ in (\ref{eq:mod1}). 
We denote by $\matX = (\bX_{1},\cdots,\bX_{n})^{T} \in \Re^{n\times d}$ the matrix with rows 
$\bX_{i}^T$ and quite similarly $\matY = (\bY_{1},\cdots,\bY_{n})^T \in \Re^{n\times d}$. 

We use the following notation throughout the paper. 
For vectors $\bu \in \Re^{d}$, the $\ell_{p}$ norm is denoted by 
$\norm{\bu}{p} = \left(\sum^{d}_{k=1}|u_{k}|^{p}\right)^{1/p}$,  
with $\|\bu\|_\infty = \max_{1\le k\le d}|u_k|$ and $\norm{\bu}{0}= \#\{j:u_{j}\neq 0\}$. 
For matrices $\matA=(A_{jk})_{d\times d}\in \Re^{d\times d}$, the $\ell_{p}\rightarrow \ell_q$ operator norm is denoted by 
$\norm{\matA}{(p,q)} = \max_{\norm{\bu}{p}=1}\norm{\matA\bu}{q}$. 
The $\ell_2 \rightarrow \ell_2$ operator norm, known as the spectrum norm, is 
\[\norm{\matA}{S}=\norm{\matA}{(2,2)} 
= \max_{\norm{\bu}{2}=1}{|\bu^{T}\matA\bu|}\] 
The vectorized $\ell_\infty$ and Frobenius norms are denoted by 
 \begin{align*}
 \norm{\matA}{\max} = \max_{j,k}|A_{jk}|,\quad \norm{\matA}{F}=\sqrt{\trace\left(\matA^{T}\matA\right)}. 
 \end{align*}
 For symmetric matrices $\matA$, the $j^{th}$ eigenpair of $\matA$ is denoted by $\lambda_{j}(\matA)$ 
 and $\btheta_{j}(\matA)$, so that $\lambda_{1}(\matA)=\norm{\matA}{S}$ and $\btheta_{1}(\matA)$ is the leading eigenvector. 
In addition to $\bbE$ and $\bbP$, which denote the expectation and probability measure, 
we denote by $\bbE_n$ the average over iid copies of variables in (\ref{eq:mod2}). For example, 
\bes
\bbE_nh(x_j,x_k) = n^{-1}\sum_{i=1}^nh(X_{ij},X_{ik}). 
\ees
The relation $a_{n}=\calO(b_{n})$ will imply $a_{n}\leq Kb_{n}$ for some fixed constant $K>0$. Finally we denote $\bbS^{d-1} = \{\bu\in \bbR^d:\ \norm{\bu}{2}=1\}$. 
\subsection{Nonparametric Estimation of Correlation Matrix}\label{subsec:nonparam} 
The approach we adopt in estimating the correlation matrix $\bSigma =(\Sigma_{jk})$ in (\ref{eq:corr}) is 
based on Kendall's tau ($\tau$) or Spearman's correlation coefficient rho ($\rho$). 

With the observations $Y_{ij}$ in (\ref{eq:mod2}), Kendall's tau is defined as 
\begin{equation}
\htau_{jk} = 
              \dfrac{2}{n(n-1)}\sum_{1\leq i_1<i_2\leq n}\sgn(Y_{i_1j}-Y_{i_2j})\sgn(Y_{i_1k}-Y_{i_2k}),
              \label{eq:tauy}
\end{equation}
and Spearman's rho as
\begin{equation}
 \hrho_{jk} = 
               \dfrac{\sum_{i=1}^{n}(r_{ij}-(n+1)/{2})(r_{ik}-(n+1)/{2})}
               {\sqrt{\sum_{i=1}^{n}(r_{ij}-(n+1)/{2})^{2}\sum_{i=1}^{n}(r_{ik}-(n+1)/{2})^{2}}} , 
               \label{eq:rhoy}
\end{equation}
where $r_{ij}$ is the rank of $Y_{ij}$ among $Y_{1j},\cdots, Y_{nj}$. In matrix notation, 
\bel{matrix-est}
\hmatT=(\htau_{jk})_{d\times d},\quad \hmatR=(\hrho_{jk})_{d\times d}. 
\eel

The population version of Kendall's tau is given by
\bel{pop-tau}
 \tau_{jk} = \bbE\,\sgn(Y_{1j}-Y_{2j})\sgn(Y_{1k}-Y_{2k}), 
\eel
while the population version of Spearman's rho is given by
\bel{pop-rho}
 \rho_{jk} = 3\,\bbE\,\sgn(Y_{1j}-Y_{2j})\sgn(Y_{1k}-Y_{3k}). 
\eel
In matrix notation, the population version of (\ref{matrix-est}) is 
\bel{matrix-pop}
\matT=(\tau_{jk})_{d\times d},\quad \matR=(\rho_{jk})_{d\times d}.
\eel

Since $f_{j}$ are strictly increasing functions, we have $\sgn(f_{j}(u)-f_{j}(v)) = \sgn(u-v)$. Thus, Kendall's tau, Spearman's rho and their population version are unchanged if the observed $\bY = (Y_{ij})_{n\times d}$ is replaced by the unobserved $\bX = (X_{ij})_{n\times d}$ in their definition. 
Since $X_{j}$  follows a standard normal distribution, we have, from \cite{Kendall48} and \cite{Kruskal58}, that for $\Sigma_{jk}=\bbE X_{j}X_{k}$, 
\begin{equation}\label{pop-eq}
 \Sigma_{jk} = \sin\left(\dfrac{\pi}{2}\tau_{jk}\right)  = 2\sin\left(\dfrac{\pi}{6}\rho_{jk}\right). 
\end{equation}
This immediately leads to the following correlation matrix estimator by Kendall's tau, 
\begin{equation}
\hbSigma^{\tau} = (\Sigmahat^{\tau}_{jk})_{d\times d},\qquad 
\Sigmahat^{\tau}_{jk} = 
                          \sin\left(\dfrac{\pi}{2}\htau_{jk}\right). 
                          \label{eq:sigesttau}
\end{equation}
In the same light we define the correlation matrix estimator by Spearman's rho as 
\begin{equation}
\hbSigma^{\rho} = (\Sigmahat^{\rho}_{jk})_{d\times d},\qquad 
\Sigmahat^{\rho}_{jk} = 2\sin\left(\dfrac{\pi}{6}\hrho_{jk}\right). 
  \label{eq:sigestrho}
\end{equation}

The following proposition states a slightly different version of Theorem 2.3 of \cite{WZ13} 
and a direct application of their argument to Spearman's rho. 

\begin{proposition}\label{prop-norms} 
Both matrices $\matT-(2/\pi)\bSigma$ and $\matR-(3/\pi)\bSigma$ are nonnegative-definite, 
$\|\matT-(2/\pi)\bSigma\|_S \le (1-2/\pi)\|\bSigma\|_S$, 
and $\|\matR-(3/\pi)\bSigma\|_S \le (1-3/\pi)\|\bSigma\|_S$. Consequently, 
\bel{prop1-1}
\|\matT\|_S\vee\|\matR\|_S \le \|\bSigma\|_S. 
\eel
\end{proposition}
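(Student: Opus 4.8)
The plan is to reduce everything to the entrywise functional relationship in (\ref{pop-eq}) together with the Schur product theorem. Since $\bSigma$ is a correlation matrix with unit diagonal, (\ref{pop-eq}) gives $\tau_{jk}=(2/\pi)\arcsin(\Sigjk)$ and $\rho_{jk}=(6/\pi)\arcsin(\Sigjk/2)$. The structural observation is that $\arcsin$ has the power series $\arcsin(t)=\sum_{k\ge 0}a_k t^{2k+1}$ with $a_k=\binom{2k}{k}/\{4^k(2k+1)\}\ge 0$ and $a_0=1$, convergent on $[-1,1]$. Writing $\bSigma^{\circ m}$ for the $m$-fold Hadamard (entrywise) power, and using $|\Sigjk|\le 1$ for entrywise convergence, this yields
\[
\matT=\frac{2}{\pi}\sum_{k\ge 0}a_k\,\bSigma^{\circ(2k+1)},\qquad
\matR=\frac{6}{\pi}\sum_{k\ge 0}\frac{a_k}{2^{2k+1}}\,\bSigma^{\circ(2k+1)}.
\]

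First I would prove the nonnegative-definiteness claims. By the Schur product theorem the Hadamard product of two nonnegative-definite matrices is nonnegative-definite, so $\bSigma\succeq 0$ gives $\bSigma^{\circ m}\succeq 0$ for every $m\ge 1$ by induction. The $k=0$ terms in the two displays are exactly $(2/\pi)\bSigma$ and $(3/\pi)\bSigma$, whence
\[
\matT-\frac{2}{\pi}\bSigma=\frac{2}{\pi}\sum_{k\ge 1}a_k\,\bSigma^{\circ(2k+1)},\qquad
\matR-\frac{3}{\pi}\bSigma=\frac{6}{\pi}\sum_{k\ge 1}\frac{a_k}{2^{2k+1}}\,\bSigma^{\circ(2k+1)}.
\]
Each summand is a nonnegative multiple of a nonnegative-definite matrix, so both remainders are nonnegative-definite.

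The spectral-norm bounds rest on the auxiliary estimate $\|\bSigma^{\circ m}\|_S\le\|\bSigma\|_S$ for all $m\ge 1$. To see this, write $\bSigma^{\circ m}=\bSigma\circ\bSigma^{\circ(m-1)}$; from $\bSigma\preceq\|\bSigma\|_S\,I$ and $\bSigma^{\circ(m-1)}\succeq 0$, the Schur product theorem gives $(\|\bSigma\|_S\,I-\bSigma)\circ\bSigma^{\circ(m-1)}\succeq 0$, i.e.\ $\bSigma^{\circ m}\preceq\|\bSigma\|_S\,(I\circ\bSigma^{\circ(m-1)})=\|\bSigma\|_S\,I$, the last equality because $\bSigma^{\circ(m-1)}$ has unit diagonal. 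Since the spectral norm is subadditive and $\|\bSigma^{\circ(2k+1)}\|_S\le\|\bSigma\|_S$ for each $k$, the two remainder series satisfy
\[
\Big\|\matT-\frac{2}{\pi}\bSigma\Big\|_S\le\frac{2}{\pi}\Big(\sum_{k\ge 1}a_k\Big)\|\bSigma\|_S,\qquad
\Big\|\matR-\frac{3}{\pi}\bSigma\Big\|_S\le\frac{6}{\pi}\Big(\sum_{k\ge 1}\frac{a_k}{2^{2k+1}}\Big)\|\bSigma\|_S.
\]
Evaluating the scalar series at the endpoints, $\sum_{k\ge 0}a_k=\arcsin(1)=\pi/2$ and $\sum_{k\ge 0}a_k/2^{2k+1}=\arcsin(1/2)=\pi/6$, so the tails equal $\pi/2-1$ and $\pi/6-1/2$, producing exactly $(1-2/\pi)\|\bSigma\|_S$ and $(1-3/\pi)\|\bSigma\|_S$.

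Finally, (\ref{prop1-1}) is immediate: applying the triangle inequality to $\matT=(\matT-(2/\pi)\bSigma)+(2/\pi)\bSigma$ gives $\|\matT\|_S\le(1-2/\pi)\|\bSigma\|_S+(2/\pi)\|\bSigma\|_S=\|\bSigma\|_S$, and likewise for $\matR$; alternatively one bounds the full series $\|\matT\|_S\le(2/\pi)(\sum_{k\ge 0}a_k)\|\bSigma\|_S=\|\bSigma\|_S$. I expect the main obstacle to be the auxiliary bound $\|\bSigma^{\circ m}\|_S\le\|\bSigma\|_S$; once one recognizes that the sine-transform relations are represented by power series with nonnegative coefficients, the Schur product theorem does the rest term by term, and the only remaining care is the routine termwise convergence of the operator-norm series, guaranteed by $a_k\ge 0$ and convergence of the scalar series.
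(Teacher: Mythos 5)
Your proof is correct, and it is essentially the argument the paper relies on: the paper gives no proof of Proposition \ref{prop-norms} itself, deferring to Theorem 2.3 of \cite{WZ13}, whose proof is exactly this arcsine power-series expansion of $\matT$ and $\matR$ in Hadamard powers of $\bSigma$ combined with the Schur product theorem (and the bound $\|\matA\circ\matB\|_S\le\|\matA\|_S\max_i B_{ii}$ for nonnegative-definite factors). Your extension to Spearman's rho via $\rho_{jk}=(6/\pi)\arcsin(\Sigma_{jk}/2)$ and the evaluation of the tail sums at $t=1$ and $t=1/2$ is precisely the "direct application of their argument" the paper alludes to.
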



\section{Expected Spectrum Error Bounds}\label{sec:Especnorm}
While Spearman's rho and Kendall's tau are structurally different, they can be represented neatly as 
U-statistics of a special type. 
In this section we develop bounds for the expected spectrum norm 
of their error via a certain decomposition of such U-statistics. 
This decomposition also provides an outline of our analysis of the concentration of the spectrum 
norm and the sparse spectrum norm of the error in subsequent sections. 


Given a sequence of $n$ observations from a population in $\Re^d$, a matrix U-statistic with order $m$ and 
kernels $h_{jk}(\bx_1,\ldots,\bx_m)$ can be written as 
\begin{equation}
 \matU_{n} = \left(\Unjk\right)_{d\times d}
 \label{eq:Umat}
\end{equation}
with elements 
\begin{equation}
 \Unjk = \dfrac{(n-m)!}{n!} \sum_{1\leq i_{1}\neq \cdots\neq i_{m}\leq n} 
 h_{jk}(\bX_{i_{1}},\bX_{i_{2}},\cdots,\bX_{i_{m}}). 
\end{equation}

Assume that $h_{jk}(\bx_1,\ldots,\bx_m)$ are permutation symmetric and set 
\begin{equation}
\quad \hbar_{jk}(\bx) = \bbE \left[h_{jk}(\bX_{{1}},\cdots,\bX_{{m}})\Big|\bX_{{1}} = \bx\right] -c_{jk}
  \label{eq:hjk}	
\end{equation}
with any constants $c_{jk}$.
The Hoeffding decomposition of $\matU_{n}$ can be written as 
\begin{align}
 \matU_{n} - \bbE \matU_{n} = \sum_{\ell=1}^m \binom{m}{\ell}\bDelta_{n}^{(\ell)} 
 \label{eq:hoefgen}
\end{align}
where $\bDelone$ is an average of iid random matrices with elements 
\begin{equation}
 \Delta_{n;jk}^{(1)} = (\bbEn -\bbE)\hbar_{jk} 
 = \frac{1}{n}\sum_{i=1}^n\Big(\hbar_{jk}(\bX_{i}) -\bbE \hbar_{jk}(\bX_1)\Big) 
\end{equation}
and $\bDelta_n^{(\ell)} = (\Delta_{n;jk}^{(\ell)})_{d\times d}$ are matrix U-statistics with completely 
degenerate kernels of order $\ell$.  We refer to \cite{Hoeff48}, \cite{HSS67}, \cite{Hajek68}, \cite{Van00} and \cite{Serfling09} for 
 detailed exposition on the Hoeffding decomposition and additional references. 

Since the components of the Hoeffding decomposition are orthogonal, 
\begin{align*}
\bbE \left(\sum_{\ell=2}^m \binom{m}{\ell}\Delta_{n;jk}^{(\ell)}\right)^2 
 & = \sum_{\ell=2}^m{n\choose \ell}^{-1}{m\choose \ell}^3 \bbE \left(\Delta_{m;jk}^{(\ell)}\right)^2  \\
 & \leq \binom{n}{2}^{-1}\binom{m}{2}\Var\big(h_{jk}\big(\bX_{{1}},\cdots,\bX_{{m}})\big). 
\end{align*}
A consequence of the above calculation of variance is 
\bes
&&\bbE \left\|\matU_{n} - \bbE \matU_{n}-m\bDelone\right\|_F^2 
\le \frac{m(m-1)}{n(n-1)}\sum_{j=1}^d\sum_{k=1}^d\Var\big(h_{jk}\big(\bX_{{1}},\cdots,\bX_{{m}})\big). 
\ees
We note that Kendall's tau and Spearman's rho are U-statistics of order $m=2$ and $3$ respectively, 
both with kernels satisfying $h_{jj}(\bx_{{1}},\cdots,\bx_{{m}})=1$ and 
$\|h_{jk}(\bx_{{1}},\cdots,\bx_{{m}})\|_\infty \le 1$ for $j\neq k$. 
It follows that the high order terms of their Hoeffding decompositions 
are explicitly bounded by  
\bel{var-Delta-2}
\bbE \left\|\matU_{n} - \bbE \matU_{n}-m\bDelone\right\|_F^2 
\le \frac{m(m-1)d(d-1)}{n(n-1)}. 
\eel

Now we consider the term $\bDelone$. 
It turns out that in the Gaussian copula model (\ref{eq:mod2}), 
the first order kernel for Kendall's tau can be written as
\bes
\hbar_{jk}(x_1,\ldots,x_d) = \begin{cases} \hbar(x_j,x_k,\Sigma_{jk}),& j\neq k \cr 1 & j=k \end{cases}
\ees
with $\hbar(x_j,x_k,0) 
=\hbar_0(x_j)\hbar_0(x_k)$, 
where $\hbar_0(x) = 2\Phi(x)-1$, and that of Spearman's rho is of the same form. 
This motivates a further decomposition of $\bDelone$ 
as a sum of $\bDelzero$ and $\bDelone - \bDelzero$, with 
\bel{Delta-zero}
&& \bDelzero = \Big(\Delta_{n;jk}^{(0)}\Big)_{d\times d}
= \Big((\bbEn-\bbE)\hbar_0(x_j)\hbar_0(x_k)\Big)_{d\times d}, 
\\ \nonumber && \bDelone - \bDelzero 
= \Big((\bbE_n-\bbE)\big(\hbar(x_j,x_k,\Sigma_{jk})-\hbar(x_j,x_k,0)\big)\Big)_{d\times d} 
\eel
It follows from the definition of the population Spearman's rho in 
(\ref{pop-rho}) that 
\bes
\bbE\,\hbar(X_j,X_k,0) = \bbE\,\hbar_0(X_j)\hbar_0(X_k) = \rho_{jk}/3,\quad \forall 1\le j\le k\le d. 
\ees 
Thus, the $\bDelzero$ in (\ref{Delta-zero}) can be written as  the difference between the sample covariance 
matrix of $\hbar_0(\bX)=(\hbar_0(X_{ij}))_{n\times d}$ and its expectation: 
\bel{Delta-zero2}
\bDelzero = n^{-1}\hbar_0(\bX)^{T}\hbar_0(\bX) - \matR/3. 
\eel
Moreover, we will prove that for both Kendall's tau and Spearman's rho
\bel{g-bound}
\Big| \hbar(x_j,x_k,\Sigma_{jk}) - \hbar(x_j,x_k,0)\Big| \le C_1\Big|\Sigma_{jk}\Big|,\ j\neq k. 
\eel
with $C_1= 2/\pi+1\le 2$ for Kendall's tau and $C_1\le 1+\sqrt{8}/\pi\le 2$ for Spearman's rho.   
Thus, since $\Var(\hbar_0^2(X_{ij}))=\int_0^1((2x-1)^2-1/3)^2dx = 4/45$ on the diagonal of $\bDelone - \bDelzero$ 
and $\bDelone - \bDelzero$ is an average of iid matrices, 
\bel{Delta-01-bd}
&& \bbE \Big\| \bDelone - \bDelzero\Big\|_S^2 
\le \bbE \Big\| \bDelone - \bDelzero\Big\|_F^2 
\le C_1^2\sum_{j\neq k}\frac{\Sigma_{jk}^2}{n}+\frac{4d}{45 n}. 
\eel

Let $\matU_{n}$ be the matrix U-statistics of either Kendall's tau or Spearman's rho,  
$\matU_{n}=\hmatT=(\htau_{jk})_{d\times d}$ or $\matU_{n}=\hmatR=(\hrho_{jk})_{d\times d}$ 
as in (\ref{matrix-est}) respectively, and 
$\hbSigma$ the corresponding estimator of $\bSigma$ in (\ref{eq:sigesttau}) and (\ref{eq:sigestrho}). 
It follows from the expansion of the sine function in 
(\ref{eq:sigesttau}) and (\ref{eq:sigestrho}) that 
\bel{outline-1}
(\hbSigma - \bSigma)_{jk}\approx a_0(\matU_{n} - \bbE \matU_{n})_{jk}, 
\eel
with $a_0=\pi/2$ for $\bU_n=\hmatT$ and $a_0=\pi/3$ for $\bU_n=\hmatR$. 
Thus, the estimators $\hbSigma$ can be decomposed as 
\bel{outline-2}
\hbSigma - \bSigma
&=& a_0\Big\{(\matU_{n} - \bbE \matU_{n}) - m\bDelone\Big\} +  a_0m\Big(\bDelone-\bDelzero\Big)
\cr && + a_0m\bDelzero + \Big\{(\hbSigma - \bSigma) - a_0(\matU_{n} - \bbE \matU_{n})\Big\}, 
\eel
where the first two terms are bounded by (\ref{var-Delta-2}) and (\ref{Delta-01-bd}) respectively 
and the third term is explicitly expressed as the difference between a sample covariance 
matrix and its expectation in (\ref{Delta-zero}). 
Moreover, the fourth term can be bounded with a higher order expansion of $\sin(t)$ in 
(\ref{eq:sigesttau}) and (\ref{eq:sigestrho}). 
We note that the fourth term on the right-hand side of (\ref{outline-2}) is not needed if one is 
interested in studying $\hmatT-\matT$ or $\hmatR-\matR$ without the sine transformation. 
This analysis leads to the following theorem. 

\begin{theorem}\label{th:GenUconc}   
Let $\hmatT$ and $\hmatR$ be respectively the Kendall's tau and Spearman's rho matrices in (\ref{matrix-est}), 
$\matT$ and $\matR$ be their population version in (\ref{matrix-pop}), and 
$\hbSigma^\tau = (\Sigmahat^{\tau}_{jk})_{d\times d}$ and 
$\hbSigma^\rho= (\Sigmahat^{\rho}_{jk})_{d\times d}$ be the corresponding estimators 
in (\ref{eq:sigesttau}) and (\ref{eq:sigestrho}) for the population 
correlation matrix $\bSigma$ in the Gaussian copula model (\ref{eq:mod1}). 
Then, for certain numerical constant $C_0$ and both $\hbSigma = \hbSigma^\tau$ and $\hbSigma = \hbSigma^\rho$\bel{th-1-1} 
&& \bbE\norm{\hbSigma - \bSigma}{S} 
+ \bbE \norm{\hmatT- \matT}{S} + \bbE \norm{\hmatR- \matR}{S}
\le C_0\|\bSigma\|_S\Big(\sqrt{d/n} + d/n\Big).
\eel
In particular, defining $n_{2}=2\lfloor n/2\rfloor$ (where $\lfloor x\rfloor$ is the integer part of $x$),
\bel{th1-2} 
\bbE \norm{\hmatT- \matT}{S}
&\le & \sqrt{2d(d-2n)_+/\{n(n-1)\}+4(2/\pi+1)^2\norm{\bSigma}{F}^2/n}
\cr && + 10\|\bSigma\|_S\Big(\sqrt{(d+1)/(3n)} + (d+1)/n\Big), 
\\ \nonumber \bbE \norm{\hbSigma^\tau - \bSigma}{S}
&\le& \frac{\pi}{2}\bbE \norm{\hmatT- \matT}{S}
+\frac{\pi}{2}\sqrt{\frac{\|\bSigma\|_F^2-d}{n_2}}+\frac{\pi^2\sqrt{3}d}{8n_2}, 
\eel
for Kendall's tau, and for Spearman's rho, with $n_{3}=3\lfloor n/3\rfloor$
\bel{th1-3}
\bbE \norm{\hmatR- \matR}{S}
&\le & \sqrt{6d(d-2n)/\{n(n-1)\}+9(1+\sqrt{8}/\pi)^2\norm{\bSigma}{F}^2/n}
\cr && + 15\|\bSigma\|_S\Big(\sqrt{(d+1)/(3n)} + (d+1)/n\Big)+\|\bSigma\|_F/n,
\\ \nonumber \bbE \norm{\hbSigma^\rho - \bSigma}{S}
&\le& \frac{\pi}{3}\bbE \norm{\hmatR- \matR}{S}
+\frac{\pi}{9}\sqrt{\frac{\|\bSigma\|_F^2-d}{n_3}}+\frac{\pi^2\sqrt{3}d}{36 n_3} + \frac{2\pi\|\bSigma\|_F}{3n}.
\eel 
\end{theorem}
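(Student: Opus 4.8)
The plan is to bound each of the four terms in the decomposition (\ref{outline-2}) separately and then recombine them by the triangle inequality; for the raw matrices $\hmatT-\matT$ and $\hmatR-\matR$ the same argument applies with $a_0=1$ and without the fourth (sine-remainder) term. Throughout I would pass from the spectral to the Frobenius norm wherever a term is already controlled in mean square, using $\norm{\matA}{S}\le\norm{\matA}{F}$ together with Jensen's inequality $\bbE\norm{\matA}{S}\le(\bbE\norm{\matA}{S}^2)^{1/2}$, and reserve a genuine operator-norm argument only for the sample-covariance term.

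For the first term $a_0\{(\matU_{n}-\bbE\matU_{n})-m\bDelone\}$, the mean-square bound (\ref{var-Delta-2}) on the higher-order Hoeffding components gives the contribution directly; a slightly sharper variance accounting (exploiting the $m(m-1)$ prefactor and that the completely degenerate kernels contribute little until $d$ exceeds a multiple of $n$) produces the $(d-2n)_+$ improvement in (\ref{th1-2}). For the second term $a_0 m(\bDelone-\bDelzero)$, I would invoke (\ref{Delta-01-bd}) together with the Lipschitz-type bound (\ref{g-bound}) and the identity $\sum_{j\neq k}\Sigma_{jk}^2=\norm{\bSigma}{F}^2-d$, so that terms one and two collapse into the single square-root expression on the first line of (\ref{th1-2}) and (\ref{th1-3}).

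The crux is the third term $a_0 m\bDelzero$, where by (\ref{Delta-zero2}) $\bDelzero=n^{-1}\hbar_0(\bX)^{T}\hbar_0(\bX)-\matR/3$ is exactly the centered sample covariance of the transformed rows $\hbar_0(\bX_i)=(2\Phi(X_{ij})-1)_{j=1}^d$, whose coordinates are bounded and have population covariance $\matR/3$. The obstacle is that these rows are not Gaussian, so (\ref{eq:dsz}) does not apply verbatim; the observation that rescues the oracle rate is that $\hbar_0'=2\phi$ is bounded by $\sqrt{2/\pi}$, hence $\hbar_0$ is Lipschitz. Consequently, writing $\bX_i=\bSigma^{1/2}\bZ_i$ with $\bZ_i\sim N(\bzero,\bbI)$, every linear form $\bu^{T}\hbar_0(\bX_i)$ is a $\sqrt{2/\pi}\,\norm{\bu}{2}\norm{\bSigma}{S}^{1/2}$-Lipschitz function of a standard Gaussian vector and is therefore sub-Gaussian with variance proxy of order $\norm{\bSigma}{S}$. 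This makes $\hbar_0(\bX_i)$ a sub-Gaussian vector to which a Davidson--Szarek/Vershynin-type operator-norm concentration bound for sample covariances applies, yielding $\bbE\norm{\bDelzero}{S}\lesssim\norm{\bSigma}{S}(\sqrt{d/n}+d/n)$ and, after tracking constants and the covariance scale $1/3$, the explicit factor $5\norm{\bSigma}{S}(\sqrt{(d+1)/(3n)}+(d+1)/n)$ that enters (\ref{th1-2})--(\ref{th1-3}) multiplied by $m$. I expect this step to be the main difficulty, since it is where the non-Gaussianity of the data must be absorbed without losing the rate of (\ref{eq:dsz}).

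Finally, for the sine-remainder term $(\hbSigma-\bSigma)-a_0(\matU_{n}-\bbE\matU_{n})$ I would Taylor-expand $\sin$ via (\ref{pop-eq}): entrywise this splits into a first-order piece with coefficient $\cos(\tfrac{\pi}{2}\tau_{jk})-1=\calO(\Sigma_{jk}^2)$, which contributes a term governed by $\norm{\bSigma}{F}^2-d$, and a genuinely quadratic remainder in $\htau_{jk}-\tau_{jk}$, which contributes a pure dimension term of order $d/n$. To control the spectral norm of these nonlinear pieces I would represent the order-$m$ U-statistic as an average over $\lfloor n/m\rfloor$ disjoint blocks of independent bounded summands (the source of $n_2=2\lfloor n/2\rfloor$ and $n_3=3\lfloor n/3\rfloor$), reducing each to an average of iid matrices again handled in Frobenius norm. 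Assembling the four bounds by the triangle inequality gives (\ref{th1-2}) and (\ref{th1-3}); applying $\norm{\matR}{S}\le\norm{\bSigma}{S}$ from Proposition \ref{prop-norms} and $\norm{\bSigma}{F}\le\sqrt{d}\norm{\bSigma}{S}$ then collapses everything to the clean form (\ref{th-1-1}).
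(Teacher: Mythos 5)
Your proposal is correct and follows essentially the same route as the paper: the same four-term decomposition (\ref{outline-2}), Frobenius/Jensen bounds for the degenerate Hoeffding remainder and for $\bDelone-\bDelzero$, a genuine operator-norm argument only for $\bDelzero$ via the observation that $\hbar_0$ is Lipschitz so that Gaussian concentration transfers to the transformed rows (the paper implements this as an $\eps$-net plus Borell's inequality applied to $f_{\bu}(\bZ)=\|\hbar_0(\bZ\bSigma^{1/2})\bu\|_2/\sqrt{n}$, which is the explicit-constant version of the sub-Gaussian covariance bound you invoke), and a Taylor expansion of $\sin$ with Hoeffding's blocking argument supplying the $n_2,n_3$ moments for the remainder. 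The only difference is cosmetic: you cite a Vershynin-type black box where the paper carries out the net computation directly to obtain the stated constants.
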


\begin{corollary}\label{cor:GenUconc}
If $\|\bSigma\|_S^2d/n\to 0$, then
\bes
\bbE \norm{\hmatT- \matT}{S} + \bbE\norm{\hbSigma^\tau - \bSigma}{S} 
+ \bbE \norm{\hmatR- \matR}{S}+\bbE \norm{\hbSigma^\rho - \bSigma}{S}\to 0. 
\ees
\end{corollary}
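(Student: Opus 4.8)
The plan is to obtain the corollary directly from the master bound (\ref{th-1-1}) of Theorem \ref{th:GenUconc}, which is stated to hold for both $\hbSigma = \hbSigma^\tau$ and $\hbSigma = \hbSigma^\rho$. First I would apply (\ref{th-1-1}) once with $\hbSigma = \hbSigma^\tau$ and once with $\hbSigma = \hbSigma^\rho$. Since every summand on the left-hand side of (\ref{th-1-1}) is a norm and hence nonnegative, each application yields in particular that the individual expectations $\bbE\norm{\hbSigma^\tau - \bSigma}{S}$, $\bbE\norm{\hbSigma^\rho - \bSigma}{S}$, $\bbE\norm{\hmatT-\matT}{S}$, and $\bbE\norm{\hmatR-\matR}{S}$ are each bounded above by $C_0\norm{\bSigma}{S}\bigl(\sqrt{d/n}+d/n\bigr)$. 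Adding the four bounds shows that the entire four-term sum appearing in Corollary \ref{cor:GenUconc} is at most $4C_0\norm{\bSigma}{S}\bigl(\sqrt{d/n}+d/n\bigr)$.

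It then remains to show that this envelope tends to zero under the stated hypothesis $\norm{\bSigma}{S}^2 d/n \to 0$. I would split it into the two summands. The first is handled immediately, since $\norm{\bSigma}{S}\sqrt{d/n} = \sqrt{\norm{\bSigma}{S}^2 d/n}\to 0$ by hypothesis. The second summand $\norm{\bSigma}{S}(d/n)$ is the only genuinely delicate point: it does not obviously vanish from the hypothesis alone, because the hypothesis controls $\norm{\bSigma}{S}^2(d/n)$ rather than $\norm{\bSigma}{S}(d/n)$, and a priori $\norm{\bSigma}{S}$ could be small.

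The resolution, and essentially the sole structural observation needed, is that $\bSigma$ is a correlation matrix with unit diagonal. Hence $\trace\bSigma = d$, and since the largest eigenvalue dominates the average eigenvalue we get $\norm{\bSigma}{S} = \lambda_1(\bSigma) \ge \trace(\bSigma)/d = 1$. Consequently $\norm{\bSigma}{S}(d/n) \le \norm{\bSigma}{S}^2(d/n) \to 0$, and combining this with the first summand gives $\norm{\bSigma}{S}\bigl(\sqrt{d/n}+d/n\bigr)\to 0$, whence the four-term sum tends to zero as claimed. I do not expect any real obstacle here: the corollary is an immediate consequence of Theorem \ref{th:GenUconc}, and the only subtlety is using the normalization $\norm{\bSigma}{S}\ge 1$ to absorb the $d/n$ term into the hypothesis $\norm{\bSigma}{S}^2 d/n\to 0$.
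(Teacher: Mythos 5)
Your proof is correct and is essentially the paper's (implicit) argument: the corollary is stated as a direct consequence of the bound (\ref{th-1-1}) in Theorem \ref{th:GenUconc}, which is exactly how you derive it. Your observation that $\|\bSigma\|_S \ge \trace(\bSigma)/d = 1$ because $\bSigma$ is a correlation matrix with unit diagonal, so that the $d/n$ term is absorbed into the hypothesis $\|\bSigma\|_S^2 d/n \to 0$, is a detail the paper leaves unstated but is precisely the right way to close that small gap.
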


\begin{remark}
Up to a numerical constant factor, Theorem \ref{th:GenUconc} match the bound (\ref{eq:dsz}) 
for the expected spectral error of the oracle sample covariance matrix $\tbSigmas$. 
While \cite{LH13a} and \cite{WZ13} focused on large deviation bound of the spectral error 
of $\norm{\hbSigmat - \bSigma}{S}$ in the elliptical copula model, a direct application of their results  
requires $\|\bSigma\|_S d(\log d)/n\to 0$ for the convergence in spectrum norm. 
Although their results are of sharper order when $\|\bSigma\|_S\gg \log d$, 
it seems that when $\|\bSigma\|_S=\calO(1)$, the extra logarithmic factor cannot be removed in their analysis 
based on the matrix Bernstein inequality \citep{Tropp11b}. 
\end{remark}

The proof of Theorem \ref{th:GenUconc} requires a number of inequalities which provide key details of the analysis outlined above the statement of the theorem. These inequalities are crucial for our derivation of large deviation spectrum error bounds as well. We state these inequalities in a sequence of lemmas below and defer their proofs to the Appendix.

Let $\varphi_\rho(x,y)$ be the bivariate normal density with mean zero, 
variance one, and correlation $\rho$. Define 
\begin{align}\label{hbar}
 \hbar(x,y,\rho) = \int\int \sgn(x-u)\sgn(y-v)\varphi_\rho(u,v)dudv. 
\end{align}

\begin{lemma}\label{lem:kernel} Let $\hbar(x,y,\rho)$ be as in (\ref{hbar}). 
Based on $\bX\in\Re^{n\times d}$ with iid $N(0,\bSigma)$ rows, 
Kendall's $\htau_{jk}$ is a U-statistic of order 2 with a permutation symmetric 
kernel $h_{j,k}(\bx_1,\bx_2)$ satisfying $|h_{jk}(\bx_1,\bx_2)| = 1$ and 
\bel{lm-kernel-1}
&&\bbE\Big[h_{jk}(\bX_1,\bX_2)\Big|\bX_1=\bx\Big] = \hbar(x_j,x_k,\Sigma_{jk})\ \forall\ j\neq k.
\eel
With $g(x,y,\rho) = \hbar(x,y,\rho) - \hbar(x,y,0)$ and  $C_1=2/\pi+1$, 
\bel{lm-kernel-2}
&& \big|g(x,y,\rho)\big| \le C_1|\rho|,\ \big|(\pa/\pa x)g(x,y,\rho)\big| \le |\rho|. 
\eel
Moreover, with $\hbar_0(x)=2\Phi(x)-1$ and $\rho_{jk}$ in (\ref{pop-rho}),
\bel{lm-kernel-3}
&& \hbar(x,y,0)=\hbar_0(x)\hbar_0(y),\ \bbE\,\hbar(X_{ij},X_{ik},0) = \rho_{jk}/3\ \forall\ j,k.
\eel
\end{lemma}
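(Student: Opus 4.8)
The plan is to establish each of the three claims in Lemma~\ref{lem:kernel} by direct computation from the definition \eqref{hbar} of $\hbar(x,y,\rho)$, exploiting the fact that $\varphi_\rho$ is a bivariate normal density with known structure.

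First I would verify \eqref{lm-kernel-1} and the claim $|h_{jk}(\bx_1,\bx_2)|=1$. Kendall's $\htau_{jk}$ in \eqref{eq:tauy} is manifestly a U-statistic of order $2$ with symmetric kernel $h_{jk}(\bx_1,\bx_2)=\sgn(x_{1j}-x_{2j})\sgn(x_{1k}-x_{2k})$, whose absolute value is $1$ almost surely since the marginals are continuous. For \eqref{lm-kernel-1} I would condition on $\bX_1=\bx$ and integrate out $\bX_2=(X_{2j},X_{2k})$, whose relevant bivariate marginal is $N(\mathbf{0},\cdot)$ with correlation $\Sigma_{jk}$; writing $u=x_{2j}$, $v=x_{2k}$ and using $\sgn(x_j-u)=-\sgn(u-x_j)$ gives exactly the integral in \eqref{hbar} evaluated at $(x_j,x_k,\Sigma_{jk})$.

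Next, for \eqref{lm-kernel-3}, the identity $\hbar(x,y,0)=\hbar_0(x)\hbar_0(y)$ follows because at $\rho=0$ the density $\varphi_0(u,v)=\phi(u)\phi(v)$ factorizes, so the double integral separates into a product $\left(\int\sgn(x-u)\phi(u)\,du\right)\left(\int\sgn(y-v)\phi(v)\,dv\right)$, and each factor equals $2\Phi(x)-1=\hbar_0(x)$. The expectation identity $\bbE\,\hbar(X_{ij},X_{ik},0)=\rho_{jk}/3$ is then just the defining relation \eqref{pop-rho} of the population Spearman's rho, once one recognizes $\hbar_0(X_{ij})\hbar_0(X_{ik})=\hbar(X_{ij},X_{ik},0)$ and matches it against $3\,\bbE\,\sgn(Y_{1j}-Y_{2j})\sgn(Y_{1k}-Y_{3k})$ after reducing the three-sample expectation to the product of conditional sign expectations.

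The main obstacle is the bound \eqref{lm-kernel-2} on $g(x,y,\rho)=\hbar(x,y,\rho)-\hbar(x,y,0)$ and its $x$-derivative. The clean route is to differentiate in $\rho$ and integrate: I would show $(\pa/\pa\rho)\hbar(x,y,\rho)=(2/\pi)\exp\!\big(-(x^2-2\rho xy+y^2)/(2(1-\rho^2))\big)/\sqrt{1-\rho^2}$, which is a standard consequence of Plackett's identity $(\pa/\pa\rho)\varphi_\rho(u,v)=(\pa^2/\pa u\,\pa v)\varphi_\rho(u,v)$ combined with integration by parts in $u,v$ against the two sign functions (the distributional derivatives of $\sgn$ produce delta masses at $u=x$, $v=y$). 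Since that partial derivative is bounded in absolute value by $2/\pi$ uniformly, integrating over $\rho$ from $0$ gives $|g(x,y,\rho)|\le (2/\pi)|\rho|$, which is even sharper than the stated $C_1=2/\pi+1$; the looser constant leaves comfortable room. For the $x$-derivative bound I would instead differentiate \eqref{hbar} in $x$ directly, using $(\pa/\pa x)\sgn(x-u)=2\delta(x-u)$ to get $(\pa/\pa x)\hbar(x,y,\rho)=2\int\sgn(y-v)\varphi_\rho(x,v)\,dv$; subtracting the $\rho=0$ version and bounding the resulting integral of $|\varphi_\rho(x,v)-\varphi_0(x,v)|$ by the same $\rho$-derivative estimate yields $|(\pa/\pa x)g(x,y,\rho)|\le|\rho|$. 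The delicate points are justifying the interchange of differentiation and integration across the sign discontinuities and controlling the behavior near $\rho=\pm 1$, both of which I expect to handle by dominated convergence away from the boundary together with the explicit Gaussian bound on the $\rho$-derivative.
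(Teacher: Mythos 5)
The identification of the kernel, the factorization $\hbar(x,y,0)=\hbar_0(x)\hbar_0(y)$, and the matching of $\bbE\,\hbar_0(X_{ij})\hbar_0(X_{ik})$ with $\rho_{jk}/3$ via (\ref{pop-rho}) are all correct and coincide with the paper's argument. For (\ref{lm-kernel-2}) you take a genuinely different route (Plackett's identity plus integration by parts against the sign functions, rather than the paper's conditional factorization $\varphi_\rho(u,v)=\varphi(u)\varphi_\rho(v|u)$ combined with the inequality $\max_y|\Phi(y)-\Phi(y\sqrt{1-\rho^2})|\le|\rho|/2$), but your route as written contains a concrete error. The exact derivative is $(\pa/\pa\rho)\hbar(x,y,\rho)=4\varphi_\rho(x,y)=\frac{2}{\pi\sqrt{1-\rho^2}}\exp\bigl(-\frac{x^2-2\rho xy+y^2}{2(1-\rho^2)}\bigr)$, and this is \emph{not} uniformly bounded by $2/\pi$: at $x=y=0$ it equals $\frac{2}{\pi\sqrt{1-\rho^2}}$, which diverges as $\rho\to\pm1$. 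Correspondingly your claimed conclusion $|g(x,y,\rho)|\le(2/\pi)|\rho|$ is false, since $g(0,0,\rho)=\frac{2}{\pi}\arcsin\rho\to1>2/\pi$ as $\rho\to1$. The approach is salvageable and in fact yields a sharper constant than the paper's: bounding the exponential by $1$ and integrating the prefactor gives $|g(x,y,\rho)|\le\int_0^{|\rho|}\frac{2}{\pi\sqrt{1-t^2}}\,dt=\frac{2}{\pi}\arcsin|\rho|\le|\rho|\le C_1|\rho|$. You should make that correction explicit rather than asserting a uniform bound on the derivative.

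The second half of (\ref{lm-kernel-2}) is also treated too loosely. Differentiating in $x$ gives $(\pa/\pa x)g(x,y,\rho)=4\varphi(x)\bigl[\Phi\bigl((y-\rho x)/\sqrt{1-\rho^2}\bigr)-\Phi(y)\bigr]$, and the target constant is exactly $1$, so the bound is delicate: the paper splits the bracket as $|\rho|/2+|\rho x|/\sqrt{2\pi}$ and then verifies $\max_{x}4\varphi(x)\bigl(|x|/\sqrt{2\pi}+1/2\bigr)\le 0.987<1$ by an explicit maximization. Your proposal to bound $\int|\varphi_\rho(x,v)-\varphi_0(x,v)|\,dv$ ``by the same $\rho$-derivative estimate'' replaces a pointwise CDF difference by a total-variation-type quantity and does not obviously produce a constant below $1$; generic total-variation bounds between $N(\rho x,1-\rho^2)$ and $N(0,1)$ carry a variance-mismatch term of order $\rho^2$ whose coefficient, after multiplying by $4\varphi(x)$, is not clearly at most $1$. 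This step needs an explicit computation of the kind the paper performs before the claimed constant can be asserted.
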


\begin{lemma}\label{lm-kernel-r} Let $\hbar(x,y,\rho)$ be as in (\ref{hbar}) and $C_1=\sqrt{8}/\pi+1$. 
Based on $\bX\in\Re^{n\times d}$ with iid $N(\bzero,\bSigma)$ rows, 
Spearman's $\hrho_{jk}$ is a U-statistic of order 3 
with a permutation symmetric kernel $h_{j,k}^\rho(\bx_1,\bx_2,\bx_3)$ satisfying
\bel{lm-kernel-4}
&&|\bbE\hrho_{jk} - \rho_{jk}| \le |\rho_{jk}|/(n+1)\le |\Sigma_{jk}|/(n+1),
\\ \label{lm-kernel-5} &&  |h_{jk}^\rho(\bx_1,\bx_2,\bx_3)|\le 1,
\\ \label{lm-kernel-6} && \big|\hbar^\rho(x,y,\rho) - \hbar^\rho(x,y,0)\big| \le C_1|\rho|, 
\\ \label{lm-kernel-7} && \big|(\pa/\pa x)\big\{\hbar^\rho(x,y,\rho) - \hbar^\rho(x,y,0)\big\}\big| \le |\rho|, 
\\ \label{lm-kernel-8} && (1+1/n)\hbar^\rho(x,y,0) = \hbar(x,y,0), 
\eel
where $\hbar^\rho(x_j,x_k,\Sigma_{jk})=\bbE\big[h_{jk}^\rho(\bX_1,\bX_2,\bX_3)\big|\bX_1=\bx\big]-\tau_{jk}/(n+1)$. 
\end{lemma}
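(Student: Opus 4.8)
The plan is to realize Spearman's $\hrho_{jk}$ as an order-3 U-statistic with an explicit (and $n$-dependent) permutation-symmetric kernel, and then to read off all five assertions from this representation together with the bivariate-normal identities already used for Kendall's tau. The starting point is the rank-to-sign identity $r_{ij}-(n+1)/2 = \frac{1}{2}\sum_{i'\neq i}\sgn(Y_{ij}-Y_{i'j})$, which holds almost surely since the $Y_{ij}$ are continuous. Substituting this into the numerator of (\ref{eq:rhoy}) and using that the denominator equals the deterministic constant $n(n^2-1)/12$, I would expand the double sum $\sum_i\big(\sum_{i'\neq i}\cdots\big)\big(\sum_{i''\neq i}\cdots\big)$ according to whether $i'=i''$ or $i'\neq i''$. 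The diagonal part $i'=i''$ is exactly $n(n-1)\htau_{jk}$, while the off-diagonal part is $n(n-1)(n-2)$ times the U-statistic $U^{(2)}$ with kernel $\sgn(x_{1j}-x_{2j})\sgn(x_{1k}-x_{3k})$, whose mean is $\rho_{jk}/3$ by (\ref{pop-rho}). This gives the identity $\hrho_{jk} = \frac{3}{n+1}\htau_{jk}+\frac{3(n-2)}{n+1}U^{(2)}$, and hence the symmetric order-3 kernel $h^\rho_{jk} = \frac{1}{n+1}\big(\psi_{12}+\psi_{13}+\psi_{23}\big)+\frac{3(n-2)}{n+1}h^{(2)}_{jk}$, where $\psi_{ab}=\sgn(x_{aj}-x_{bj})\sgn(x_{ak}-x_{bk})$ is the Kendall pair-kernel and $h^{(2)}_{jk}$ is the symmetrization of $\sgn(x_{1j}-x_{2j})\sgn(x_{1k}-x_{3k})$ over the six permutations of $(1,2,3)$; that the order-3 U-statistic of the first piece equals $\frac{3}{n+1}\htau_{jk}$ is the standard lifting of an order-2 kernel to order 3.

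For the bias (\ref{lm-kernel-4}) I would take expectations in this identity, using $\bbE\htau_{jk}=\tau_{jk}$ and $\bbE U^{(2)}=\rho_{jk}/3$, to obtain $\bbE\hrho_{jk}-\rho_{jk}=3(\tau_{jk}-\rho_{jk})/(n+1)$. In the Gaussian copula $\tau_{jk}=\frac{2}{\pi}\arcsin\Sigma_{jk}$ and $\rho_{jk}=\frac{6}{\pi}\arcsin(\Sigma_{jk}/2)$, so $3|\tau_{jk}-\rho_{jk}|\le|\rho_{jk}|$ reduces to $2\arcsin(r/2)\le\arcsin r$ on $[0,1]$, which is immediate from $\sin(2\theta)=2\sin\theta\cos\theta\le2\sin\theta$ with $\theta=\arcsin(r/2)$; the second inequality $|\rho_{jk}|\le|\Sigma_{jk}|$ is the elementary $\frac{6}{\pi}\arcsin(r/2)\le r$. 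For the sup-norm bound (\ref{lm-kernel-5}) I would note that $h^\rho_{jk}$ depends only on the relative orderings of $(x_{1j},x_{2j},x_{3j})$ and of $(x_{1k},x_{2k},x_{3k})$, so by permutation symmetry I may fix the first ordering and let the second range over the six elements of $S_3$. A short computation then shows $h^{(2)}_{jk}$ always equals $\pm\frac{1}{3}$; writing $s=\psi_{12}+\psi_{13}+\psi_{23}\in\{-3,-1,1,3\}$ and $\varepsilon=3h^{(2)}_{jk}\in\{-1,1\}$ gives $h^\rho_{jk}=\{s+(n-2)\varepsilon\}/(n+1)$, and since $s=\pm3$ forces complete concordance (respectively discordance) and hence $\varepsilon=\pm1$ of the matching sign, every attainable pair $(s,\varepsilon)$ satisfies $|s+(n-2)\varepsilon|\le n+1$. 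Thus $|h^\rho_{jk}|\le1$, with equality at perfect concordance and discordance.

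It remains to analyze the first-order projection $\hbar^\rho(x,y,\Sigma_{jk})=\bbE[h^\rho_{jk}\mid\bX_1=\bx]-\tau_{jk}/(n+1)$. Projecting each piece onto $\bX_1=\bx$, the terms $\psi_{12},\psi_{13}$ contribute $\hbar(x,y,\Sigma_{jk})$ each by (\ref{lm-kernel-1}) while $\psi_{23}$ contributes the constant $\tau_{jk}$; among the six permutations in $h^{(2)}_{jk}$, two yield $\hbar_0(x)\hbar_0(y)$ (the $j$- and $k$-coordinates coming from distinct independent copies, so $\Sigma_{jk}$ drops out) and the remaining four yield, in matching pairs, the cross terms $\alpha_\rho(x)=\bbE[\sgn(U-x)\hbar_0(V)]$ and $\beta_\rho(y)=\bbE[\sgn(V-y)\hbar_0(U)]$ with $(U,V)$ standard bivariate normal of correlation $\rho$. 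After subtracting $\tau_{jk}/(n+1)$ this yields $\hbar^\rho(x,y,\rho)=\frac{2}{n+1}\hbar(x,y,\rho)+\frac{n-2}{n+1}\{\hbar_0(x)\hbar_0(y)+\alpha_\rho(x)+\beta_\rho(y)\}$. Identity (\ref{lm-kernel-8}) drops out at once, because $\alpha_0\equiv\beta_0\equiv0$ (as $\bbE\hbar_0(V)=0$) and $\hbar(x,y,0)=\hbar_0(x)\hbar_0(y)$ by (\ref{lm-kernel-3}), so $\hbar^\rho(x,y,0)=\frac{n}{n+1}\hbar_0(x)\hbar_0(y)$. For (\ref{lm-kernel-6}) and (\ref{lm-kernel-7}) I would write $\hbar^\rho(x,y,\rho)-\hbar^\rho(x,y,0)=\frac{2}{n+1}g(x,y,\rho)+\frac{n-2}{n+1}\{\alpha_\rho(x)+\beta_\rho(y)\}$ with $g$ the Kendall difference of Lemma \ref{lem:kernel}, and control the cross terms through the heat-type identity $\pa_\rho\varphi_\rho(u,v)=\pa_u\pa_v\varphi_\rho(u,v)$: integration by parts gives the closed forms $\pa_\rho\hbar=4\varphi_\rho(x,y)$ and $\pa_\rho\alpha_\rho(x)=4\int\phi(v)\varphi_\rho(x,v)\,dv$, and analogous expressions for the $x$-derivatives. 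Since $\alpha_0=\beta_0=0$, integrating in $\rho$ and using $\int_0^{|\rho|}(1-s^2)^{-1/2}ds=\arcsin|\rho|$ together with the elementary bounds $\arcsin t\le\frac{\pi}{2}t$ and $\arcsin(t/\sqrt2)\le\frac{\pi}{4}t$ reduces everything to linear-in-$|\rho|$ estimates; combined with $|g|,|\pa_x g|\le C|\rho|$ from (\ref{lm-kernel-2}) and the fact that the weights $\frac{2}{n+1}$, $\frac{n-2}{n+1}$ are at most $1$ for $n\ge3$, this yields (\ref{lm-kernel-6}) with $C_1=\sqrt{8}/\pi+1$ and (\ref{lm-kernel-7}) with constant $1$.

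The step I expect to be the main obstacle is the sup-norm bound (\ref{lm-kernel-5}): a naive triangle inequality is useless here, because the coefficient $3(n-2)/(n+1)$ multiplying $h^{(2)}_{jk}$ is close to $3$. What saves the bound is the cancellation forcing $h^{(2)}_{jk}\in\{\pm\frac{1}{3}\}$ and, more importantly, the coupling of its sign to the sign of $s$, so that the two pieces reinforce only in the fully concordant or fully discordant configurations, where $|h^\rho_{jk}|$ equals exactly $1$. Establishing this coupling --- rather than merely that each piece is individually bounded --- is the crux; by contrast, once the heat-equation identity is in hand, the constant-tracking behind (\ref{lm-kernel-6})--(\ref{lm-kernel-7}) is routine, and the representation and bias steps are classical.
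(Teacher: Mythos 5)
Your proposal is correct and follows the same overall route as the paper's proof: the same Hoeffding representation $\hrho_{jk}=\frac{3}{n+1}\htau_{jk}+\frac{3(n-2)}{n+1}U^{(2)}$ (the paper cites it from Hoeffding (1948) with kernel $3\sgn(x_{1j}-x_{2j})\sgn(x_{1k}-x_{3k})$, whereas you rederive it from the rank-to-sign identity), the same reduction of the bias to the $\arcsin$ relations among $\tau_{jk},\rho_{jk},\Sigma_{jk}$, the same projection formula $\hbar^\rho(x,y,\rho)=\frac{2}{n+1}\hbar(x,y,\rho)+\frac{n-2}{n+1}\{\hbar(x,y,0)+\gbar(x,\rho)+\gbar(y,\rho)\}$ (your $\alpha_\rho,\beta_\rho$ are exactly the paper's $\gbar(\cdot,\rho)=\int\hbar(\cdot,y,\rho)\varphi(y)\,dy$), and hence the same derivation of (\ref{lm-kernel-8}). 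Two sub-steps differ. For (\ref{lm-kernel-5}) the paper invokes $|u_{3,jk}|=|4\hrho_{3,jk}-3\htau_{3,jk}|\le 1$, while you verify the equivalent fact $h^{(2)}_{jk}\in\{\pm\frac{1}{3}\}$ by enumerating the six rank configurations; note that once $|h^{(2)}_{jk}|\le\frac13$ is in hand, the plain triangle inequality already gives $\frac{3}{n+1}+\frac{3(n-2)}{3(n+1)}=1$, so the sign coupling you single out as the crux is not actually needed. For the cross terms the paper bounds $|\gbar|$ and $|\pa_x\gbar|$ directly from the conditional-CDF representation of $g(x,y,\rho)$ together with (\ref{pf-lm-g-1}), whereas you integrate Price's identity $\pa_\rho\,\bbE f(U)g(V)=\bbE f'(U)g'(V)$ in $\rho$; your route in fact yields the sharper $|\gbar(x,\rho)|\le\frac{2}{\pi}\arcsin(|\rho|/\sqrt{2})\le|\rho|/2$, comfortably within the stated $C_1$. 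One small incompleteness: for (\ref{lm-kernel-4}) you reduce $3|\tau_{jk}-\rho_{jk}|\le|\rho_{jk}|$ to $2\arcsin(r/2)\le\arcsin r$, but that only gives $\tau_{jk}\ge\frac{2}{3}\rho_{jk}$ (for $\rho_{jk}>0$); you also need the upper bound $\tau_{jk}\le\frac{4}{3}\rho_{jk}$, which the paper obtains in the stronger form $\tau_{jk}\le\rho_{jk}$ from $2\sin(x/3)\le\sin x$ (equivalently $\arcsin r\le 3\arcsin(r/2)$). That missing half is equally elementary, but it must be stated for the two-sided bound to follow.
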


\begin{lemma}\label{lm-exp-sum}
Inequalities (\ref{var-Delta-2}) and (\ref{Delta-01-bd}) hold 
with $C_1= 2/\pi+1\le 2$ and $m=2$ for Kendall's tau 
and $C_1\le 1+\sqrt{8}/\pi\le 2$ and $m=3$ for Spearman's rho. 
Moreover, for both Kendall's tau and Spearman's rho,
\bel{lm-exp-sum-1}
&& \bbE \norm{(\matU_{n} - \bbE \matU_{n}) - m\bDelzero}{F}^2
\le \frac{m(m-1)d(d-2n)_+}{n(n-1)}+C_1^2\frac{\norm{\bSigma}{F}^2}{n/m^2}. 
\eel
\end{lemma}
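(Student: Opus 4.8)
The plan is to establish the two previously displayed bounds (\ref{var-Delta-2}) and (\ref{Delta-01-bd}) with the stated constants, and then to deduce the new bound (\ref{lm-exp-sum-1}) by splitting the error into a degenerate higher-order part and a first-order part that are mutually orthogonal. For (\ref{var-Delta-2}), the orthogonality computation displayed just before the lemma already reduces its left-hand side to $\binom{n}{2}^{-1}\binom{m}{2}\sum_{j,k}\Var\big(h_{jk}(\bX_{1},\ldots,\bX_{m})\big)$, so I would only need to control the summed variances. By Lemma~\ref{lem:kernel} Kendall's kernel has order $m=2$ and satisfies $|h_{jk}|=1$, and by Lemma~\ref{lm-kernel-r} Spearman's kernel has order $m=3$ and satisfies $|h^{\rho}_{jk}|\le 1$; hence $\Var(h_{jk})\le 1$ off the diagonal, while $h_{jj}\equiv 1$ is constant and contributes no variance. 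This gives $\sum_{j,k}\Var(h_{jk})\le d(d-1)$ and, with $\binom{n}{2}^{-1}\binom{m}{2}=m(m-1)/\{n(n-1)\}$, exactly (\ref{var-Delta-2}).

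For (\ref{Delta-01-bd}) I would use the representation (\ref{Delta-zero}) of $\bDelone-\bDelzero$ as an average of iid centered matrices, whose $(j,k)$ summand off the diagonal is $g(x_j,x_k,\Sigma_{jk})=\hbar(x_j,x_k,\Sigma_{jk})-\hbar(x_j,x_k,0)$ and whose diagonal summand reduces to $1-\hbar_0(x_j)^2$ (the first-order projection of the constant diagonal kernel being zero). The Lipschitz-in-$\rho$ bound $|g(x,y,\rho)|\le C_1|\rho|$ from (\ref{lm-kernel-2}) for Kendall and (\ref{lm-kernel-6}) for Spearman, with $C_1=2/\pi+1$ and $C_1=1+\sqrt{8}/\pi$ respectively, yields $\bbE(\bDelone-\bDelzero)_{jk}^2\le C_1^2\Sigma_{jk}^2/n$ off the diagonal, while the explicit value $\Var(\hbar_0^2(X))=4/45$ gives $4/(45n)$ on each diagonal entry; summing over the entries produces (\ref{Delta-01-bd}).

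The main step is (\ref{lm-exp-sum-1}), for which I would write
\[
(\matU_{n}-\bbE\matU_{n})-m\bDelzero=\big\{(\matU_{n}-\bbE\matU_{n})-m\bDelone\big\}+m\big(\bDelone-\bDelzero\big).
\]
The first bracket equals $\sum_{\ell=2}^{m}\binom{m}{\ell}\bDelta_{n}^{(\ell)}$, a sum of completely degenerate Hoeffding components of order at least two, whereas $\bDelone-\bDelzero$ is a first-order iid average; since Hoeffding projections of different orders are orthogonal, the cross term vanishes entrywise and hence in the Frobenius inner product. Therefore
\[
\bbE\norm{(\matU_{n}-\bbE\matU_{n})-m\bDelzero}{F}^2=\bbE\norm{(\matU_{n}-\bbE\matU_{n})-m\bDelone}{F}^2+m^2\,\bbE\norm{\bDelone-\bDelzero}{F}^2,
\]
and inserting (\ref{var-Delta-2}), (\ref{Delta-01-bd}) together with $\sum_{j\neq k}\Sigma_{jk}^2=\norm{\bSigma}{F}^2-d$ bounds the right-hand side by $\tfrac{m(m-1)d(d-1)}{n(n-1)}+\tfrac{m^2C_1^2(\norm{\bSigma}{F}^2-d)}{n}+\tfrac{4m^2d}{45n}$.

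The final and only delicate point is to absorb this into the advertised form, which is where I expect the real work to lie. Subtracting the target $\tfrac{m(m-1)d(d-2n)_+}{n(n-1)}+\tfrac{m^2C_1^2\norm{\bSigma}{F}^2}{n}$ and dividing through by $md/n$, the claim reduces to
\[
\frac{(m-1)\big[(d-1)-(d-2n)_+\big]}{n-1}\le m\Big(C_1^2-\tfrac{4}{45}\Big).
\]
Because $(d-1)-(d-2n)_+\le 2n-1$ for every $d$, it suffices to verify $\tfrac{(m-1)(2n-1)}{n-1}\le m(C_1^2-4/45)$, which I would confirm numerically for all $n\ge m$ with $(m,C_1)=(2,\,2/\pi+1)$ and $(3,\,1+\sqrt{8}/\pi)$. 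The crux is exactly this trade-off: the extra diagonal mass $m^2C_1^2 d/n$ gained by enlarging $\norm{\bSigma}{F}^2-d$ to $\norm{\bSigma}{F}^2$ must dominate both the degenerate-component surplus and the diagonal term $4m^2 d/(45n)$ uniformly in $d$, which is precisely the role played by the cutoff $(d-2n)_+$.
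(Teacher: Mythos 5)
Your proposal is correct and follows essentially the same route as the paper: the same Hoeffding orthogonality splitting of $(\matU_n-\bbE\matU_n)-m\bDelzero$ into the degenerate part and $m(\bDelone-\bDelzero)$, the same kernel bounds from Lemmas \ref{lem:kernel} and \ref{lm-kernel-r}, and the same absorption of the $d(d-1)$ and $4d/(45n)$ terms into the $(d-2n)_+$ and $C_1^2\|\bSigma\|_F^2$ terms. Your final reduction to $(m-1)(2n-1)/(n-1)\le m(C_1^2-4/45)$ is just a more explicit version of the paper's observation that $C_1^2\ge 2+4/45$, and it does check out for both $(m,C_1)$ pairs.
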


\begin{lemma}  \label{lm-exp-zero}
Let $\bDelzero$ as in (\ref{Delta-zero}) and $\matR = (\rho_{jk})_{d\times d}$. Then, 
\bel{lm-exp-zero-1}
&& \bbE\|\bDelzero\|_{S} 
\le 5\|\bSigma\|_S\Big(\sqrt{(d+1)/(3n)}+(d+1)/n\Big)
\eel
and with at least probability $1-2e^{-t^2}$, 
\bel{lm-exp-zero-2}
\|\bDelzero\|_{S} &\le & 5\|\bSigma\|_S\Big(\sqrt{(d+t^2/\pi)/(3n)} + (d+(t^2+1)/\pi)/n\Big). 
\eel
\end{lemma}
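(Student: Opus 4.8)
The starting point is the representation (\ref{Delta-zero2}): with $\matW$ the $n\times d$ matrix of entries $\hbar_0(X_{ij})=2\Phi(X_{ij})-1$, one has $\bDelzero=n^{-1}\matW^{T}\matW-\matR/3$, so $\bDelzero$ is the centred sample covariance of the iid bounded rows $\bW_i=(\hbar_0(X_{ij}))_j$. I would first record the three structural facts that generate the constants in (\ref{lm-exp-zero-1})--(\ref{lm-exp-zero-2}): (i) by (\ref{lm-kernel-3}), $\matR/3=\bbE\,n^{-1}\matW^{T}\matW$ with $\|\matR/3\|_S\le\|\bSigma\|_S/3$ (Proposition \ref{prop-norms}) and per-coordinate variance $\bbE W_{ij}^2=1/3$ --- this is the source of the $1/3$; (ii) $\hbar_0$ is $\sqrt{2/\pi}$-Lipschitz, since $\hbar_0'=2\varphi\le\sqrt{2/\pi}$; and (iii) $\mathrm{vec}(\matX)\sim N(\bzero,\,I_n\otimes\bSigma)$ has $\|I_n\otimes\bSigma\|_S=\|\bSigma\|_S$, so Lipschitz functionals of $\matX$ concentrate at scale $\sqrt{\|\bSigma\|_S/n}$ --- this is the source of both the $\|\bSigma\|_S$ prefactor and, through (ii), the factor $\pi$.

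The crucial manoeuvre is to \emph{linearise the square} so as to replace the non-Lipschitz functional $\|\bDelzero\|_S$ by a genuinely Lipschitz one. Put $\sigma_1=\sqrt{\|\bSigma\|_S/3}$ and
\[
E=\sup_{\bu\in\Sd}\Big|\,n^{-1/2}\norm{\matW\bu}{2}-\sqrt{\bu^{T}(\matR/3)\bu}\,\Big|.
\]
Applying $a^2-b^2=(a-b)(a+b)$ with $a=n^{-1/2}\norm{\matW\bu}{2}$ and $b=\sqrt{\bu^{T}(\matR/3)\bu}\le\sigma_1$ gives $|\bu^{T}\bDelzero\bu|\le E(2\sigma_1+E)$ for every $\bu\in\Sd$, hence $\|\bDelzero\|_S\le 2\sigma_1 E+E^2$. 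Now $E$ is Lipschitz in $\matX$: for fixed $\bu$ the map $\matX\mapsto n^{-1/2}\norm{\matW\bu}{2}$ is $\sqrt{2/(\pi n)}$-Lipschitz in Frobenius norm (compose the coordinatewise $\sqrt{2/\pi}$-Lipschitz map $\matX\mapsto\matW$ with the $n^{-1/2}$-Lipschitz map $\matW\mapsto n^{-1/2}\norm{\matW\bu}{2}$), and the supremum over $\bu$ preserves this constant.

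For the large-deviation bound I would apply the Gaussian concentration inequality to the $\sqrt{2/(\pi n)}$-Lipschitz function $E$ of $\mathrm{vec}(\matX)\sim N(\bzero,I_n\otimes\bSigma)$: with probability at least $1-e^{-t^2}$,
\[
E\le\bbE E+2t\sqrt{\|\bSigma\|_S/(\pi n)}.
\]
Substituting into $\|\bDelzero\|_S\le 2\sigma_1 E+E^2$ and expanding, the term $2\sigma_1\cdot 2t\sqrt{\|\bSigma\|_S/(\pi n)}=4\|\bSigma\|_S\sqrt{t^2/(3\pi n)}$ produces (with room to spare against the constant $5$) the $t$-dependent part of the first summand in (\ref{lm-exp-zero-2}), while the squared deviation produces the $t^2/n$ part, the remaining pieces coming from $\bbE E$. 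The expectation bound (\ref{lm-exp-zero-1}) follows identically from $\bbE\|\bDelzero\|_S\le 2\sigma_1\,\bbE E+\bbE E^2$, with $\bbE E^2\le(\bbE E)^2+\Var(E)$ controlled by the Gaussian Poincar\'e inequality $\Var(E)\le 2\|\bSigma\|_S/(\pi n)$.

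Everything therefore reduces to the single estimate $\bbE E=\calO\!\big(\sqrt{\|\bSigma\|_S\,d/n}\big)$ \emph{with no logarithmic factor}, and this is the step I expect to be the main obstacle. The difficulty is exactly the one flagged in the Remark after Theorem \ref{th:GenUconc}: a matrix Bernstein bound on $\sum_i(\bW_i\bW_i^{T}-\matR/3)$ uses the crude $\norm{\bW_i}{2}^2\le d$ and pays an unavoidable $\log d$, whereas here the process $\bu\mapsto n^{-1/2}\norm{\matW\bu}{2}$ has \emph{variance-aware} sub-Gaussian increments of scale $\sqrt{\|\bSigma\|_S/n}$ rather than $\sqrt{d/n}$, by (ii)--(iii), so one should pay only the Gaussian width $\asymp\sqrt d$ of $\Sd$ and obtain $\sqrt{\|\bSigma\|_S d/n}$. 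The clean way to cash this in is a Gaussian comparison in the spirit of the Davidson--Szarek bound (\ref{eq:dsz}): compare the Lipschitz-of-Gaussian process $n^{-1/2}\bv^{T}\matW\bu$ on $\Sd\times\bbS^{n-1}$ to the Gaussian process obtained by replacing $\matW$ with $\sqrt{2/\pi}$ times a matrix of the same within-row covariance $\bSigma$, whose expected extreme singular values are controlled by (\ref{eq:dsz}). The delicate point is that $n^{-1/2}\norm{\matW\bu}{2}$ is a Lipschitz image of a Gaussian rather than a Gaussian process, so a direct Slepian--Fernique comparison is unavailable; one must either symmetrise and invoke a contraction principle for such images, or run a chaining argument in the intrinsic metric while controlling the net correction coming from the deterministic term $\sqrt{\bu^{T}(\matR/3)\bu}$, whose $\sigma_1$-Lipschitz dependence on $\bu$ must be absorbed at the correct scale $\sqrt{d/n}$ rather than at scale one.
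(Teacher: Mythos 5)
Your setup coincides with the paper's: both start from the representation $\bDelzero=n^{-1}\matW^{T}\matW-\matR/3$, linearize the square via $a^{2}-b^{2}=(a-b)(a+b)$ so that everything rests on the deviation of the $\sqrt{2\norm{\bSigma}{S}/(\pi n)}$-Lipschitz functionals $\bu\mapsto n^{-1/2}\norm{\matW\bu}{2}$, and invoke Gaussian concentration for Lipschitz images of $\mathrm{vec}(\matX)\sim N(\bzero,I_{n}\otimes\bSigma)$. But your argument has a genuine gap exactly where you flag it: you never prove $\bbE E=\calO\big(\sqrt{\norm{\bSigma}{S}\,d/n}\big)$ for the supremum $E$ over the whole sphere, and the routes you sketch do not close it. A Slepian--Fernique comparison is indeed unavailable because $n^{-1/2}\norm{\matW\bu}{2}$ is a Lipschitz image of a Gaussian rather than a Gaussian process, and a symmetrization-plus-contraction or chaining argument for such processes is a substantial piece of work that is not written down. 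Without that estimate, neither (\ref{lm-exp-zero-1}) nor (\ref{lm-exp-zero-2}) follows, since in your deviation bound the leading term is $2\sigma_{1}\bbE E$, not the $t$-dependent fluctuation.

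The paper avoids this obstacle entirely by discretizing \emph{before} taking any supremum: an $\eps$-net $\{\bu_{j}:j\le N_{\eps}\}$ of $\Sd$ with $N_{\eps}\le(1+1/\eps)^{d}$ gives
\begin{align*}
\norm{\bDelzero}{S}\le\frac{\max_{j\le N_{\eps}}\big|\bu_{j}^{T}\bDelzero\bu_{j}\big|}{1-4\eps(1+\eps)},
\end{align*}
and then the pointwise sub-Gaussian tail at scale $\sqrt{2\norm{\bSigma}{S}/(\pi n)}$ combined with a union bound over the net yields, after integrating the tail, $\bbE\max_{j}\big(f_{\bu_{j}}-\bbE f_{\bu_{j}}\big)^{2}\lesssim\norm{\bSigma}{S}\big(d+1\big)/n$ because $\log N_{\eps}\asymp d$. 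This is precisely the $\sqrt{d/n}$ rate with no logarithmic factor; the $\log d$ loss you attribute to matrix Bernstein simply does not arise in the net argument, since the cardinality cost is $\sqrt{\log N_{\eps}/n}\asymp\sqrt{d/n}$, which is already the target rate. The same tail bound, evaluated at level $\sqrt{2\pi d+2t^{2}}$, gives the deviation inequality (\ref{lm-exp-zero-2}). One further small discrepancy: the paper centers $f_{\bu}$ at $\bbE f_{\bu}$ rather than at $\sqrt{\bu^{T}(\matR/3)\bu}=\sqrt{\bbE f_{\bu}^{2}}$ and pays the explicit variance correction $\bbE f_{\bu}^{2}-(\bbE f_{\bu})^{2}\le 4\norm{\bSigma}{S}/(\pi n)$, which is where the ``$+1$'' in $(d+1)/n$ and the $(t^{2}+1)/\pi$ terms originate; your centering at $\sqrt{\bu^{T}(\matR/3)\bu}$ would require the analogous (easy but unstated) bound $\sqrt{\bbE a^{2}}-\bbE a\le\sqrt{\Var(a)}$. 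In short: keep your decomposition, but replace the unproved control of $\bbE E$ by the elementary net reduction; everything else then goes through.
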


\begin{lemma} \label{lm5} (i) Let $\hbSigmat =(\Sigmahat^\tau_{jk})_{d\times d}$ 
be as in (\ref{eq:sigesttau}) and $\bDeltat = (\Deltajkt)_{d\times d}$ with $\Deltajkt = \htau_{jk}-\tau_{jk}$. 
Let $n_2=2\lfloor n/2\rfloor$ where $\lfloor x \rfloor$ is the integer part of $x$. Then, 
\bel{lm5-1} 
&& \sqrt{\bbE\norm{(\hbSigmat - \bSigma) - (\pi/2)\bDeltat}{F}^2}
\le \frac{\pi}{2}\sqrt{\frac{\|\bSigma\|_F^2-d}{n_2}}+\frac{\pi^2\sqrt{3}d}{8n_2}. 
\eel
(ii) Let $\hbSigmar =(\Sigmahat^\rho_{jk})_{d\times d}$ 
be as in (\ref{eq:sigestrho}) and $\bDeltar = (\Deltajkr)_{d\times d}$ with $\Deltajkr = \hrho_{jk}-\bbE\hrho_{jk}$. 
Let $n_3=3\lfloor n/3\rfloor$ where $\lfloor x \rfloor$ is the integer part of $x$. 
Then, 
\bel{lm5-2} 
&& \sqrt{\bbE\norm{(\hbSigmar - \bSigma) - (\pi/2)\bDeltar}{F}^2}
\le \frac{\pi}{9}\sqrt{\frac{\|\bSigma\|_F^2-d}{n_3}}+\frac{\pi^2\sqrt{3}d}{36 n_3} 
+ \frac{\pi \sqrt{\|\bSigma\|_F^2-d}}{3(n+1)}. 
\eel
\end{lemma}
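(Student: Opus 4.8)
The plan is to handle both parts through the same second-order linearization of the sine link, reducing the Frobenius error to moment bounds for the underlying U-statistics. Throughout, the diagonal entries contribute nothing: $\htau_{jj}=\tau_{jj}=1$ and $\hrho_{jj}=\rho_{jj}$, so the residual vanishes on the diagonal and all index sums run over $j\neq k$. For Kendall's tau, write $D_{jk}=\Deltajkt=\htau_{jk}-\tau_{jk}$ and expand $\sin(\frac{\pi}{2}\htau_{jk})$ about $\tau_{jk}$ to second order with Lagrange remainder. Since $\Sigma_{jk}=\sin(\frac{\pi}{2}\tau_{jk})$ and the slope of the link at zero correlation is $\pi/2$, the entrywise residual $R_{jk}=(\hbSigmat-\bSigma)_{jk}-\frac{\pi}{2}D_{jk}$ splits as $R_{jk}=A_{jk}+B_{jk}$, where $B_{jk}=\frac{\pi}{2}\{\cos(\frac{\pi}{2}\tau_{jk})-1\}D_{jk}$ is the curvature defect (the gap between the true slope $\frac{\pi}{2}\cos(\frac{\pi}{2}\tau_{jk})$ and the slope $\frac{\pi}{2}$ used at zero) and $A_{jk}$ is the second-order remainder.

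Next I would bound the two pieces by elementary inequalities. Because $|\sin''|\le1$, the remainder satisfies $|A_{jk}|\le\frac{1}{2}(\frac{\pi}{2})^2D_{jk}^2=\frac{\pi^2}{8}D_{jk}^2$. For the defect, $\cos(\frac{\pi}{2}\tau_{jk})=\sqrt{1-\Sigma_{jk}^2}$ gives $1-\cos(\frac{\pi}{2}\tau_{jk})=1-\sqrt{1-\Sigma_{jk}^2}\le\Sigma_{jk}^2$, whence $|B_{jk}|\le\frac{\pi}{2}\Sigma_{jk}^2|D_{jk}|$. Applying Minkowski's inequality in $L^2(\bbP)\otimes\ell^2(\{j\neq k\})$ separates the contributions, $\sqrt{\bbE\norm{R}{F}^2}\le(\bbE\sum_{j\neq k}A_{jk}^2)^{1/2}+(\bbE\sum_{j\neq k}B_{jk}^2)^{1/2}$, with the first term controlled by $\frac{\pi^2}{8}(\sum_{j\neq k}\bbE D_{jk}^4)^{1/2}$ and the second by $\frac{\pi}{2}(\sum_{j\neq k}\Sigma_{jk}^4\,\bbE D_{jk}^2)^{1/2}$; the elementary inequality $\Sigma_{jk}^4\le\Sigma_{jk}^2$ will be applied to the latter.

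The heart of the argument, and the step I expect to be the main obstacle, is obtaining moment bounds for the U-statistic error $D_{jk}$ with the \emph{sharp} denominator $n_2=2\lfloor n/2\rfloor$ (and $n_3=3\lfloor n/3\rfloor$ for Spearman). Here I would use Hoeffding's representation of an order-$m$ U-statistic as an average over permutations of block averages of $\lfloor n/m\rfloor$ iid bounded terms; convexity then transfers moment bounds of a single block average to $D_{jk}$. For $m=2$ this yields a variance bound $\bbE D_{jk}^2\le 1/n_2$ and, since the leading fourth moment of an iid average is three times its squared variance, $\bbE D_{jk}^4\le 3/n_2^2$ up to lower-order corrections. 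The factor $\sqrt{3}$ and the power $d$ (rather than $d^2$) in the target now emerge together from the square root: $(\sum_{j\neq k}\bbE D_{jk}^4)^{1/2}\le(d^2\cdot 3/n_2^2)^{1/2}=\sqrt{3}\,d/n_2$, giving the term $\frac{\pi^2\sqrt{3}d}{8n_2}$, while $\frac{\pi}{2}(\sum_{j\neq k}\Sigma_{jk}^4\,\bbE D_{jk}^2)^{1/2}\le\frac{\pi}{2}(n_2^{-1}\sum_{j\neq k}\Sigma_{jk}^2)^{1/2}=\frac{\pi}{2}\sqrt{(\|\bSigma\|_F^2-d)/n_2}$ completes (i). Tracking the exact constants here, in particular getting the denominator down to $n_2$ and dominating the lower-order fourth-moment corrections so that the clean bound $3/n_2^2$ survives, is the delicate part of the proof.

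Part (ii) follows the same template with three changes. First, the link is $2\sin(\frac{\pi}{6}\rho_{jk})$ with slope $\pi/3$ at zero, so the linear coefficient matching $\frac{\pi}{3}\bDeltar$ is $\pi/3$ and the curvature defect uses $1-\cos(\frac{\pi}{6}\rho_{jk})=1-\sqrt{1-\Sigma_{jk}^2/4}\le\Sigma_{jk}^2/4$. Second, $\hrho_{jk}$ is a U-statistic of order $m=3$, so the blocking argument produces the denominator $n_3$ and the analogous second/fourth moment bounds, yielding the first two terms $\frac{\pi}{9}\sqrt{(\|\bSigma\|_F^2-d)/n_3}$ and $\frac{\pi^2\sqrt{3}d}{36 n_3}$. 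Third, since $\bDeltar=\hrho_{jk}-\bbE\hrho_{jk}$ is centered at the expectation rather than at $\rho_{jk}$, expanding about $\rho_{jk}$ leaves a deterministic bias term $\frac{\pi}{3}(\bbE\hrho_{jk}-\rho_{jk})$; invoking the bias bound $|\bbE\hrho_{jk}-\rho_{jk}|\le|\Sigma_{jk}|/(n+1)$ from Lemma \ref{lm-kernel-r} and summing in Frobenius norm contributes exactly the remaining term $\frac{\pi\sqrt{\|\bSigma\|_F^2-d}}{3(n+1)}$. Collecting the three contributions by Minkowski's inequality gives (\ref{lm5-2}).
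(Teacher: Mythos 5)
Your proposal follows essentially the same route as the paper's proof: a second-order expansion of the sine link splitting the residual into a curvature-defect term, a quadratic remainder, and (for Spearman) a bias term, combined via Minkowski in the $L^2$--Frobenius norm with the Hoeffding blocking bounds $\bbE\Delta_{jk}^2\le 1/n_m$ and $\bbE\Delta_{jk}^4\le 3/n_m^2$. The only cosmetic difference is that you bound the defect by $1-\sqrt{1-\Sigma_{jk}^2}\le\Sigma_{jk}^2$ where the paper uses $|1-\cos x|\le 2|x|/\pi$ together with $|\tau_{jk}|\le|\Sigma_{jk}|$; both reduce to the same sum $\sum_{j\neq k}\Sigma_{jk}^2=\|\bSigma\|_F^2-d$.
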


\begin{proof}[Proof of Theorem \ref{th:GenUconc}] 
Let $n_m=m\lfloor n/m\rfloor$. As in (\ref{outline-2}), for Kendall's tau, 
\bes
\norm{\hmatT- \matT}{S}
&\le & \Big\|(\matU_{n} - \bbE \matU_{n}) - 2\bDelzero\Big\|_{F} + 2\Big\|\bDelzero\Big\|_{S}, 
\cr \norm{\hbSigma^\tau - \bSigma}{S}
&\le & \Big\|(\hbSigma^\tau - \bSigma) - (\pi/2)(\matU_{n} - \bbE \matU_{n})\Big\|_{F} + (\pi/2)\Big\|\hmatT- \matT\Big\|_{S}. 
\ees
with $\matU_{n}=\hmatT$ and $\bbE \matU_{n}=\matT$. 
It follows from (\ref{lm-exp-sum-1}) of Lemma \ref{lm-exp-sum} with $m=2$, 
(\ref{lm-exp-zero-1}) of Lemma \ref{lm-exp-zero} and (\ref{lm5-1}) of Lemma \ref{lm5} that 
the inequalities in (\ref{th1-2}) hold. 

Similarly, for Spearman's rho, 
\bes
\norm{\hmatR- \matR}{S}
&\le & \Big\|(\matU_{n} - \bbE \matU_{n}) - 3\bDelzero\Big\|_{F} + 3\Big\|\bDelzero\Big\|_{S}
+\Big\| \bbE \matU_{n}-\matR\Big\|_{F}, 
\cr \norm{\hbSigma^\rho - \bSigma}{S} 
&\le & \Big\|(\hbSigma^\rho - \bSigma) - (\pi/3)(\hmatR- \matR)\Big\|_{F} 
+ (\pi/3)\norm{\hmatR- \matR}{S},
\ees
with $\matU_{n}=\hmatR$ and $\|\bbE \matU_{n}-\matR\|_F=\|(\bbE\hrho_{jk}-\rho_{jk})_{d\times d}\|_F\le \sqrt{\|\bSigma\|_F^2-d}/(n+1)$ 
by (\ref{lm-kernel-4}). 
Thus, (\ref{lm-exp-sum-1}), (\ref{lm-exp-zero-1}) and (\ref{lm5-2}) yield the inequalities in (\ref{th1-3}). 
\end{proof}

\section{Large Deviation Inequalities}\label{sec:LargeDev} 
While the upper bounds for the expected spectral error in Theorem \ref{th:GenUconc} and Corollary \ref{cor:GenUconc} 
match (\ref{eq:dsz}) for the oracle sample covariance matrix, it is useful only when $d/n\to 0$ as is the case in many 
applications. For $d > n$, large deviation bounds for the sparse spectral norm of the form (\ref{eq:dsz1}) 
is often used instead. 
In the present section we provide large deviation inequalities for both the spectral norm and the sparse spectral norm 
of the error for Kendall's tau and Spearman's rho. 

The main result for this section is a large deviation bound in the following theorem for the 
maximum spectral error in a collection of diagonal submatrices. 

\begin{theorem}\label{th:GenUconc2} 
Let $\hmatT$ and $\hmatR$ be respectively the Kendall's tau and Spearman's rho matrices in (\ref{matrix-est}), 
$\matT$ and $\matR$ be their population version in (\ref{matrix-pop}), and 
$\hbSigma^\tau = (\Sigmahat^{\tau}_{jk})_{d\times d}$ and 
$\hbSigma^\rho= (\Sigmahat^{\rho}_{jk})_{d\times d}$ be the corresponding estimators 
in (\ref{eq:sigesttau}) and (\ref{eq:sigestrho}) for the population 
correlation matrix $\bSigma$ in the Gaussian copula model (\ref{eq:mod1}). 
Let $1\le s\le d$, $m\ge 1$ and $\scrA_{s,m}$ be a collection of $m$ 
subsets $A\subset\{1,2,\cdots,d\}$ with $|A|\le s$. 
Then, there exists a certain numerical constant $C$ such that 
 for both $\hbSigma = \hbSigmat$ and $\hbSigma = \hbSigmar$, 
 \bel{th2-2}
 && \|(\hbSigma -\bSigma)_{A\times A}\|_{S} 
 + \norm{(\hmatT- \matT)_{A\times A}}{S} 
+ \norm{(\hmatR- \matR)_{A\times A}}{S}
 \cr &\le & C\|\bSigma_{A\times A}\|_S\Big(\sqrt{(s+t+\log m)/n}
+(s+t+\log m)/n\Big)
\cr && +\ C \norm{\bSigma_{A\times A}}{(2,\infty)}
\norm{\bSigma_{A\times A}}{S}^{1/2}\sqrt{(t+\log m)/n} + C s(\log d + t)/n
\eel
simultaneously for all $A\in\scrA_{s,m}$ with at least probability $1-e^{-t}$.
\end{theorem}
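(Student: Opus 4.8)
The plan is to run the Hoeffding-plus-sine decomposition (\ref{outline-2}), restricted to the submatrix indexed by $A\times A$, and to control each piece with high probability \emph{simultaneously} over the $m$ sets of $\scrA_{s,m}$. For $\hbSigma\in\{\hbSigmat,\hbSigmar\}$ and the corresponding $\matU_n\in\{\hmatT,\hmatR\}$, I write $(\hbSigma-\bSigma)_{A\times A}$ as the sum of (i) the high-order Hoeffding remainder $a_0\{(\matU_n-\bbE\matU_n)-m\bDelone\}_{A\times A}$, (ii) the correlation correction $a_0m(\bDelone-\bDelzero)_{A\times A}$, (iii) the main term $a_0m(\bDelzero)_{A\times A}$, and (iv) the sine-expansion remainder, and I do the analogous reduction for $\hmatT-\matT$ and $\hmatR-\matR$. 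The triangle inequality then splits the left-hand side of (\ref{th2-2}) into these contributions. The main term (iii) will produce the first line and the cross term of (\ref{th2-2}); the remaining pieces will all be absorbed into $Cs(\log d+t)/n$.

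The heart is a sharp large-deviation bound for the sparse operator norm of the centered sample covariance $\bDelzero$ of the bounded score vectors $\hbar_0(\bX_i)=(2\Phi(X_{ij})-1)_j$. First I would pass to a $1/4$-net $\mathcal N$ of $\bbS^{|A|-1}$ of cardinality at most $9^{s}$, so that $\|(\bDelzero)_{A\times A}\|_S\le 2\max_{\bu\in\mathcal N}|\bu^T\bDelzero\bu|$, and for fixed $\bu$ write $\bu^T\bDelzero\bu=\frac1n\sum_i(V_i^2-\bbE V_i^2)$ with $V_i=\bu^T\hbar_0(\bX_i)$. The crucial observation is that $V_i$ is a Lipschitz function of the Gaussian $\bX_i$ with Lipschitz constant at most $\sqrt{2/\pi}$ (since $\hbar_0'=2\varphi\le\sqrt{2/\pi}$), so by Gaussian concentration $V_i$ is sub-Gaussian with variance proxy of order $\|\bSigma_{A\times A}\|_S$ --- strictly better than the trivial $|V_i|\le\|\bu\|_1\le\sqrt s$. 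Hence $V_i^2$ is sub-exponential at scale $\|\bSigma_{A\times A}\|_S$, and Bernstein's inequality for the average, combined with the expectation bound of Lemma \ref{lm-exp-zero} applied with $d$ replaced by $s$ and a union bound over $\mathcal N$ and over the $m$ sets (deviation parameter of order $s+t+\log m$), yields exactly the first line of (\ref{th2-2}). The cross term is where the row structure of $\bSigma_{A\times A}$ enters: it comes from a sharper, weak-variance (Bousquet-type) bookkeeping in this step and from the correlation-correction term (ii), whose entrywise bound $\lesssim|\Sigma_{jk}|$ from (\ref{g-bound}) makes the relevant variance and boundedness parameters scale with the row $\ell_2$-norms $\|\bSigma_{A\times A}\|_{(2,\infty)}$, producing a fluctuation of order $\|\bSigma_{A\times A}\|_{(2,\infty)}\|\bSigma_{A\times A}\|_S^{1/2}\sqrt{(t+\log m)/n}$ in which the $\sqrt s$ net cost no longer appears.

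For the remaining pieces I would avoid a net entirely. Each of (i), (ii) (to the extent not already used above) and (iv) is a matrix of bounded, or completely degenerate order-$\ell$, $U$-statistic type, and I control its sparse operator norm crudely by $\|\matM_{A\times A}\|_S\le s\|\matM\|_{\max}$; it then suffices to bound $\|\matM\|_{\max}$ over all $\le d^2$ entries at once. Hoeffding's inequality for bounded $U$-statistics handles (ii) and the sine remainder (iv), reduced through the quadratic bound of Lemma \ref{lm5}, while a Bernstein bound for the completely degenerate order-$\ell\ge2$ kernels (whose entries are of size $1/n$) handles (i). Since this union over entries is performed globally, it contributes $\log d$ and holds simultaneously for every $A\in\scrA_{s,m}$, giving the $Cs(\log d+t)/n$ term. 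Finally I assemble the three matrices by the triangle inequality and a single union bound; the three-fold and $m$-fold unions only alter constants, yielding (\ref{th2-2}) with probability at least $1-e^{-t}$.

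The hard part will be the sharp concentration of the main term (iii) together with the precise cross term: obtaining the coefficient $\|\bSigma_{A\times A}\|_{(2,\infty)}\|\bSigma_{A\times A}\|_S^{1/2}$ requires a weak-variance estimate that tracks the variance coordinate-by-coordinate rather than bounding it by the crude $\|\bSigma_{A\times A}\|_S^2$, and it requires keeping the $\sqrt s$ net entropy out of the fluctuation term. As the Remark after Theorem \ref{th:GenUconc} stresses, a direct matrix-Bernstein argument would cost an extra logarithmic factor, so the net-plus-scalar-Bernstein route --- powered by the Gaussian--Lipschitz sub-Gaussianity of $V_i$ --- seems essential. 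A secondary technical point is the uniform $\ell_\infty$ control of the completely degenerate second- and third-order Hoeffding components, which needs a decoupling and Bernstein argument for degenerate bounded $U$-statistics.
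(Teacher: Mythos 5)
Your proposal is correct and tracks the paper's proof in all essentials: the same four-way decomposition \eqref{outline-2}/\eqref{outline-3} into the degenerate Hoeffding remainder, the correction $\bDelone-\bDelzero$, the covariance-type main term $\bDelzero$, and the sine remainder; the same $\eps$-net plus union over the $m$ sets for $(\bDelzero)_{A\times A}$ (Lemma \ref{lm-exp-zero} restricted to $A$); the same crude reduction $\|\matM_{A\times A}\|_S\le s\|\matM\|_{\max}$ combined with an exponential inequality for completely degenerate bounded U-statistics (the paper invokes Arcones--Gin\'e in Lemma \ref{lm-exp-Delta-two}) to produce the $Cs(\log d+t)/n$ term; and the same quadratic max-norm treatment of the sine remainder (Lemma \ref{lm8}, the large-deviation analogue of Lemma \ref{lm5}). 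The only substantive divergence is in the concentration machinery. For $\bDelone-\bDelzero$ you propose a Bousquet-type empirical-process bound, whereas the paper (Lemma \ref{lm-exp-Delta-one-zero}) applies Borell's Gaussian--Lipschitz concentration to $\|\bDelone-\bDelzero\|_S$ viewed as a function of the underlying Gaussian sample, using the derivative bounds \eqref{lm-kernel-2} and \eqref{lm-kernel-7} to compute the Lipschitz constant $2\norm{\bSigma}{(2,\infty)}\norm{\bSigma}{S}^{1/2}/\sqrt{n}$; it is this Lipschitz constant, not a weak-variance computation, that literally produces the cross term in \eqref{th2-2} with no net entropy attached. Your route would more naturally yield a deviation of order $\norm{\bSigma_{A\times A}}{S}\sqrt{(t+\log m)/n}$ for this piece, which is dominated by the first line of \eqref{th2-2}, so the theorem still follows. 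Likewise, for the main term the paper concentrates $f_{\bu}(\bZ)=\norm{\hbar_0(\bX)\bu}{2}/\sqrt{n}$ directly as a Lipschitz function of the Gaussian matrix and then squares, rather than treating each $V_i^2$ as sub-exponential and applying scalar Bernstein; both rest on the same $\sqrt{2\norm{\bSigma}{S}/\pi}$ Lipschitz estimate for $\hbar_0$ and deliver the same $(s+t+\log m)$ rate, so either implementation closes the argument.
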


\begin{corollary}\label{cor:GenUconcsumm}
If $\,t+\log d \le \beta \max\big\{\log(ed/s),\sqrt{(n/s)(t/s+\log(ed/s))}\big\}$, then 
for both $\hbSigma = \hbSigmat$ and $\hbSigma = \hbSigmar$ 
and a certain numerical constant $C$, 
\bel{eq:corsumm}
 && \max_{|A|\leq s} \frac{\|(\hbSigma -\bSigma)_{A\times A}\|_{S} 
 + \bbE \norm{(\hmatT- \matT)_{A\times A}}{S} 
+ \bbE \norm{(\hmatR- \matR)_{A\times A}}{S}}{
\norm{\bSigma_{A\times A}}{S}
 +\norm{\bSigma_{A\times A}}{S}^{1/2}\norm{\bSigma_{A\times A}}{(2,\infty)}}
 \\ \nonumber  &\le & C(1+\beta)\Big(\sqrt{(t+s\log(ed/s))/n}+(t+s\log(ed/s))/n\Big) 
\eel
with at least probability $1-e^{-t}$.
\end{corollary}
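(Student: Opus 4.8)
The plan is to obtain Corollary \ref{cor:GenUconcsumm} from Theorem \ref{th:GenUconc2} by applying the latter to the collection of \emph{all} subsets of size at most $s$ via a union bound, and then absorbing the leftover term using the stated hypothesis on $t+\log d$.

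First I would take $\scrA_{s,m}$ in Theorem \ref{th:GenUconc2} to be the family of all $A\subset\{1,\dots,d\}$ with $|A|\le s$. Its cardinality is $m=\sum_{k=0}^s\binom{d}{k}$, and the elementary bound $\sum_{k=0}^s\binom{d}{k}\le(ed/s)^s$ yields $\log m\le s\log(ed/s)$. Plugging this into (\ref{th2-2}) and using $\log(ed/s)\ge 1$ to absorb the lone $s$ through $s+t+\log m\le 2\big(t+s\log(ed/s)\big)$, the right-hand side of (\ref{th2-2}) is, up to the constant $C$, bounded by
\[
\norm{\bSigma_{A\times A}}{S}\Big(\sqrt{(t+s\log(ed/s))/n}+(t+s\log(ed/s))/n\Big)+\norm{\bSigma_{A\times A}}{(2,\infty)}\norm{\bSigma_{A\times A}}{S}^{1/2}\sqrt{(t+s\log(ed/s))/n}+s(\log d+t)/n,
\]
and this holds simultaneously for all $|A|\le s$ with probability at least $1-e^{-t}$.

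I would then divide by the denominator $\norm{\bSigma_{A\times A}}{S}+\norm{\bSigma_{A\times A}}{S}^{1/2}\norm{\bSigma_{A\times A}}{(2,\infty)}$. Since $\bSigma$ has unit diagonal, $\trace(\bSigma_{A\times A})=|A|$ forces $\norm{\bSigma_{A\times A}}{S}\ge 1$, so the denominator is at least $1$; the first two terms then reduce cleanly to $C\big(\sqrt{(t+s\log(ed/s))/n}+(t+s\log(ed/s))/n\big)$, while the residual $s(\log d+t)/n$ divided by a quantity $\ge 1$ is still at most $s(\log d+t)/n$.

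The crux, and the only use of the hypothesis, is absorbing this residual, which I would do by a two-case split on which term attains the maximum in the condition. If $t+\log d\le\beta\log(ed/s)$, then $s(t+\log d)/n\le\beta\,(t+s\log(ed/s))/n$, matching the linear term. If instead $t+\log d\le\beta\sqrt{(n/s)(t/s+\log(ed/s))}$, a direct computation gives $s(t+\log d)/n\le\beta\sqrt{(t+s\log(ed/s))/n}$, matching the square-root term. Either way the residual is at most $\beta$ times the main bound, producing the factor $1+\beta$ in (\ref{eq:corsumm}). Finally, since the two expectation terms in the numerator are deterministic they do not enter the probability; each is controlled by the same expression by applying Theorem \ref{th:GenUconc2} to the single set $A$ and integrating its tail over $t$ (the $e^{-t}$ decay keeps the order unchanged), after which adding them to the high-probability bound on $\norm{(\hbSigma-\bSigma)_{A\times A}}{S}$ and maximizing over $|A|\le s$ finishes the proof.
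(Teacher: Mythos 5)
Your proposal is correct and follows exactly the derivation the paper intends (the corollary is stated without a separate proof precisely because it is this routine specialization of Theorem \ref{th:GenUconc2}): take $\scrA_{s,m}$ to be all subsets of size at most $s$, use $\log m\le s\log(ed/s)$ and $\|\bSigma_{A\times A}\|_S\ge 1$, and absorb the residual $Cs(\log d+t)/n$ via the two-case reading of the hypothesis, which produces the $(1+\beta)$ factor. The case computations (e.g.\ $(s/n)\sqrt{(n/s)(t/s+\log(ed/s))}=\sqrt{(t+s\log(ed/s))/n}$) and the tail-integration handling of the two deterministic expectation terms are all sound.
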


\begin{remark} Corollary \ref{cor:GenUconcsumm} illustrates that 
for $\max_{|A|\leq s}\norm{\bSigma_{A\times A}}{S}=\calO(1)$ and under a mild condition on $(n,d,s)$,  
Theorem~\ref{th:GenUconc2} yields a sparse spectral error bound that matches (\ref{eq:dsz1}) 
of the latent $\tbSigma^s$. 
Note that $\|\bSigma_{A\times A}\|_{(2,\infty)}\le \|\bSigma_{A\times A}\|_{S}$. 
In comparison, the spectral error bounds in \cite{LH13a} and \cite{WZ13}, which apply to the elliptical copula family, 
leads to $\max_{|A|\le s}\norm{(\hbSigmat-\bSigma)_{A\times A}}{S}=\calO(s\sqrt{(\log d)/n})$ by the union bound. 
\cite{LH13a} provided a concentration inequality of order $\sqrt{s(\log d)/n}$ for $\hbSigmat$ 
in the transelliptical family under an additional `sign sub-Gaussian' condition.  
They also provide two examples of elliptical copulas that satisfy the sign sub-Gaussian condition. 
The first example is the case of elliptical copulas with the latent correlation $\bSigma$ 
satisfying a compound symmetric structure (i.e. $\Sigjk=\rho$ for all $j\neq k$). 
The second example is the case when $\bSigma$ has a diagonal block structure with each diagonal block 
having a compound symmetric structure. 
However, it is unclear if the sign sub-Gaussian condition is readily verifiable in general. 
Theorem~\ref{th:GenUconc2} and Corollary~\ref{cor:GenUconcsumm} establish the concentration of 
the nonparametric estimates for the Gaussian copula model without the sign sub-Gaussianity condition, 
although the Gaussian copula family is smaller than the transelliptical family. 
\end{remark}

The corollary below states a simpler but slightly weaker version of 
Theorem \ref{th:GenUconc2} for $s=d$. 
It matches (\ref{eq:dsz1}) for $s=d$ when $\norm{\bSigma}{S}=\calO(1)$ and $t+\log d = \calO(\sqrt{n/d})$. 

\begin{corollary}\label{cor:GenUconc2} 
For a certain numerical constant $C$, 
\bel{eq:corsumm}
\|\hbSigma -\bSigma\|_{S} 
&\leq& C\norm{\bSigma}{S}\Big(\sqrt{(t+d)/n} +(t+d)/n\Big) 
 \cr && + C\norm{\bSigma}{S}^{1/2}\norm{\bSigma}{(2,\infty)}\sqrt{t/n}+ C (t+\log d)d/n
\eel
with at least probability $1-e^{-t}$ for both $\hbSigma = \hbSigmat$ and $\hbSigma = \hbSigmar$.
\end{corollary}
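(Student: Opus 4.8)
The plan is to derive Corollary~\ref{cor:GenUconc2} as the special case $s=d$, $m=1$ of Theorem~\ref{th:GenUconc2}, with a light repackaging of the resulting bound. Taking the single set $A=\{1,\ldots,d\}$ gives $\scrA_{d,1}$ with $|A|=d$ and $\log m=\log 1=0$, so Theorem~\ref{th:GenUconc2} holds with probability at least $1-e^{-t}$ and $A\times A$ equal to the full matrix. Under this substitution $\bSigma_{A\times A}=\bSigma$, so the first line of (\ref{th2-2}) becomes the full-matrix spectral error. First I would write out (\ref{th2-2}) with $s=d$, $\log m=0$, obtaining
\bes
\|(\hbSigma-\bSigma)\|_S + \norm{\hmatT-\matT}{S}+\norm{\hmatR-\matR}{S}
\le C\|\bSigma\|_S\Big(\sqrt{(d+t)/n}+(d+t)/n\Big)
+C\norm{\bSigma}{(2,\infty)}\norm{\bSigma}{S}^{1/2}\sqrt{t/n}+Cs(\log d+t)/n
\ees
with $s=d$ in the last term. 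Since the left-hand side dominates $\|\hbSigma-\bSigma\|_S$ (the other two summands are nonnegative), the claimed bound (\ref{eq:corsumm}) follows immediately for that single term.

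The only genuine discrepancy to reconcile is the final additive term: Theorem~\ref{th:GenUconc2} produces $Cs(\log d+t)/n$, which at $s=d$ reads $Cd(\log d+t)/n$, whereas the corollary writes it as $C(t+\log d)d/n$. These are literally the same expression, so no work is needed there beyond noting the rearrangement. Likewise the middle term $C\norm{\bSigma}{(2,\infty)}\norm{\bSigma}{S}^{1/2}\sqrt{t/n}$ transcribes verbatim to the corollary's $C\norm{\bSigma}{S}^{1/2}\norm{\bSigma}{(2,\infty)}\sqrt{t/n}$ (the two factors commute as scalars). Thus the proof is essentially a verification that the three groups of terms in (\ref{th2-2}) specialize exactly to the three groups in (\ref{eq:corsumm}), with the observation that dropping the two nonnegative U-statistic spectral norms from the left side only weakens the inequality.

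I would then address the stated sharpness claim in the corollary's preamble, namely that (\ref{eq:corsumm}) matches (\ref{eq:dsz1}) at $s=d$ when $\norm{\bSigma}{S}=\calO(1)$ and $t+\log d=\calO(\sqrt{n/d})$. Under $\norm{\bSigma}{S}=\calO(1)$ we have $\norm{\bSigma}{(2,\infty)}\le\norm{\bSigma}{S}=\calO(1)$, so every prefactor is $\calO(1)$ and the bound reduces to a constant times $\sqrt{(t+d)/n}+(t+d)/n+\sqrt{t/n}+(t+\log d)d/n$. The condition $t+\log d=\calO(\sqrt{n/d})$ forces $(t+\log d)d/n=\calO(\sqrt{d/n})$, which is absorbed into the leading $\sqrt{(t+d)/n}$ term, leaving the dominant rate $\sqrt{d/n}$ that matches the $s=d$ specialization of (\ref{eq:dsz1}).

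The main obstacle here is not analytical but bookkeeping: Theorem~\ref{th:GenUconc2} already carries all the probabilistic and spectral-decomposition machinery, so the corollary is a pure substitution. The one point requiring care is confirming that setting $m=1$ (a single subset) is admissible in the definition of $\scrA_{s,m}$ and yields $\log m=0$, and that no hidden dependence on $m$ other than through $\log m$ enters the constant $C$. Since $C$ in Theorem~\ref{th:GenUconc2} is asserted to be a universal numerical constant, the same $C$ serves the corollary, and the derivation is complete.
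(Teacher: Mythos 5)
Your proposal is correct and is exactly the argument the paper intends: Corollary \ref{cor:GenUconc2} is the $s=d$, $m=1$ (single set $A=\{1,\ldots,d\}$, so $\log m=0$) specialization of Theorem \ref{th:GenUconc2}, with the two nonnegative U-statistic error terms dropped from the left-hand side. The bookkeeping of the three groups of terms is right, and your observation about the "weaker" qualifier and the matching with (\ref{eq:dsz1}) is consistent with the paper's remark.
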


The proof of Theorem \ref{th:GenUconc2} is carried out by establishing large deviation inequalities 
for the first two terms in the decomposition in (\ref{outline-2}), 
an application of Lemma \ref{lm-exp-zero} to the third, and an application of an inequality of \cite{WZ13}
to the fourth. 
 
\begin{lemma}\label{lm-exp-Delta-one-zero}
Let us take $C_1= 2/\pi+1\le 2$ for Kendall's tau and $C_1\le 1+\sqrt{8}/\pi\le 2$ for Spearman's rho. For both Kendall's tau and Spearman's rho, 
\bel{lm-exp-Delta-one-zero-1}
&& \norm{\bDelone-\bDelzero}{S}
\le \sqrt{\frac{C^{2}_{1}\norm{\bSigma}{F}^{2}-2d}{n}} + 2\sqrt{2}\norm{\bSigma}{(2,\infty)}\norm{\bSigma}{S}^{1/2}\sqrt{\dfrac{t}{{n}}}
\eel
with at least probability $1-e^{-t}$. 
\end{lemma}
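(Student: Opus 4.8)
The plan is to recognize $\bDelone-\bDelzero$ as a centered average of i.i.d.\ random matrices and to bound its spectral norm by Gaussian Lipschitz concentration around its mean. Writing $g_{jk}(x,y)=\hbar(x,y,\Sigjk)-\hbar(x,y,0)$ for the off-diagonal entries, so that $(\bDelone-\bDelzero)_{jk}=(\bbEn-\bbE)g_{jk}$ for $j\neq k$ while $(\bDelone-\bDelzero)_{jj}=-(\bbEn-\bbE)\hbar_0^2$ on the diagonal, I view $F(\matX):=\norm{\bDelone-\bDelzero}{S}$ as a function of the data matrix $\matX$. For its mean, Jensen's inequality and $\norm{\cdot}{S}\le\norm{\cdot}{F}$ give $\bbE F\le\sqrt{\bbE\norm{\bDelone-\bDelzero}{F}^2}$, and the variance bound already recorded in (\ref{Delta-01-bd}), namely $\bbE\norm{\bDelone-\bDelzero}{F}^2\le C_1^2(\norm{\bSigma}{F}^2-d)/n+4d/(45n)$, is at most $(C_1^2\norm{\bSigma}{F}^2-2d)/n$ since $C_1^2\ge 2+4/45$ holds for both $C_1=2/\pi+1$ and $C_1=1+\sqrt8/\pi$. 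This already yields the first term of (\ref{lm-exp-Delta-one-zero-1}); it remains to produce the deviation term by a concentration argument.

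The core step is a uniform Lipschitz bound. Using $\norm{\matA}{S}=\sup_{\bu\in\Sd}|\bu^T\matA\bu|$, I write $F=\sup_{\bu\in\Sd}|\Psi_\bu|$ with $\Psi_\bu(\matX)=\frac1n\sum_{i=1}^n\big(\sum_{j,k}u_ju_k\,g_{jk}(X_{ij},X_{ik})-\bbE[\,\cdot\,]\big)$. For fixed $\bu$, differentiating $\Psi_\bu$ in $X_{ij}$ and collecting the two symmetric occurrences of the index $j$ gives an off-diagonal contribution bounded by $\frac2n|u_j|\sum_{k\neq j}|u_k|\,|\pa_x g_{jk}|$, while the single diagonal term contributes $\frac1n u_j^2\,\pa_x(-\hbar_0^2)$. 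The derivative estimates $|\pa_x g_{jk}|\le|\Sigjk|$ from (\ref{lm-kernel-2}) (Kendall) and (\ref{lm-kernel-7}) (Spearman), together with $|\pa_x\hbar_0^2|=|4(2\Phi(x)-1)\varphi(x)|\le 4/\sqrt{2\pi}\le 2$ for the diagonal (absorbed by the factor $2$ and by $|\Sigma_{jj}|=1$), let me bound $|\pa\Psi_\bu/\pa X_{ij}|\le\frac2n v_j(\widetilde\bSigma\bv)_j$, where $\bv=(|u_1|,\dots,|u_d|)^T$ and $\widetilde\bSigma=(|\Sigma_{jk}|)$. Summing squares over $(i,j)$ and applying Cauchy--Schwarz row-by-row, $(\widetilde\bSigma\bv)_j\le\norm{\bSigma_{j\cdot}}{2}\le\norm{\bSigma}{(2,\infty)}$, gives $\norm{\nabla_\matX\Psi_\bu}{F}^2\le\frac4n\norm{\bSigma}{(2,\infty)}^2$ uniformly in $\bu$. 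A supremum of functions sharing one Lipschitz constant is Lipschitz with the same constant, so $F$ is $L$-Lipschitz in $\matX$ (Frobenius) with $L=2\norm{\bSigma}{(2,\infty)}/\sqrt n$.

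Finally I standardize the Gaussian design. Writing $\matX=\matG\bSigma^{1/2}$ with $\matG$ having i.i.d.\ $N(0,1)$ entries, the map $\matG\mapsto F(\matG\bSigma^{1/2})$ is Lipschitz with constant $\widetilde L=L\,\norm{\bSigma^{1/2}}{S}=2\norm{\bSigma}{(2,\infty)}\norm{\bSigma}{S}^{1/2}/\sqrt n$. The Gaussian concentration inequality then yields $\bbP\big(F>\bbE F+r\big)\le\exp\{-r^2/(2\widetilde L^2)\}$; choosing $r=\widetilde L\sqrt{2t}=2\sqrt2\,\norm{\bSigma}{(2,\infty)}\norm{\bSigma}{S}^{1/2}\sqrt{t/n}$ makes the right-hand side $e^{-t}$, and combining with the mean bound of the first paragraph proves (\ref{lm-exp-Delta-one-zero-1}).

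The main obstacle is the Lipschitz estimate with the sharp $\norm{\bSigma}{(2,\infty)}\norm{\bSigma}{S}^{1/2}$ scaling: it hinges on using the \emph{pointwise derivative} bound $|\pa_x g|\le|\rho|$ (rather than a crude bound on $g$ itself) followed by the row-wise Cauchy--Schwarz that replaces what would otherwise be $\norm{\bSigma}{S}$ with the smaller factor $\norm{\bSigma}{(2,\infty)}$. This sharper scaling, obtained through Gaussian Lipschitz concentration rather than a matrix-Bernstein bound, is exactly what removes the spurious $\log d$ factor. Some additional care is needed with the diagonal entries and the factor-of-two bookkeeping so as to reach the stated constant $2\sqrt2$.
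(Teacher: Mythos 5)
Your proposal is correct and follows essentially the same route as the paper: bound the mean by $\sqrt{\bbE\|\bDelone-\bDelzero\|_F^2}$ via (\ref{Delta-01-bd}) and $C_1^2\ge 2+4/45$, establish that $\matX\mapsto\|\bDelone-\bDelzero\|_S$ is Lipschitz with constant $2\|\bSigma\|_{(2,\infty)}\|\bSigma\|_S^{1/2}/\sqrt{n}$ after standardizing $\matX=\matG\bSigma^{1/2}$, and apply Gaussian concentration. The only cosmetic difference is that you derive the Lipschitz constant by differentiating each quadratic form $\Psi_{\bu}$ while the paper bounds the increment of the matrix-valued map $\matG$ directly; both yield the same constant.
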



\begin{lemma}\label{lm-exp-Delta-two}
Let $\matU_{n} - \bbE \matU_{n}-m\bDelone$ be as in (\ref{outline-2}). 
Then, for a certain constant $C$, 
\begin{align*}
\max_{|A|\le s} \norm{\big(\matU_{n} - \bbE \matU_{n}-m\bDelone\big)_{A\times A}}{S} 
\le C s(\log d + t)/n
\end{align*}
with at least probability $1-e^{-t}$.  
\end{lemma}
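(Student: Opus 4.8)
The plan is to exploit the fact that the remainder $\matU_{n}-\bbE\matU_{n}-m\bDelone=\sum_{\ell=2}^{m}\binom{m}{\ell}\bDelta_{n}^{(\ell)}$ from the Hoeffding decomposition (\ref{eq:hoefgen}) consists \emph{only} of completely degenerate U-statistics of order $\ell\ge 2$, so that each of its entries fluctuates on the scale $1/n$ rather than $1/\sqrt n$, and to convert a sharp entrywise tail bound into the submatrix spectral bound through the elementary inequality $\norm{M_{A\times A}}{S}\le\norm{M_{A\times A}}{F}\le s\max_{j,k\in A}|M_{jk}|$. This last step is precisely where the crude factor $s$ (rather than $\sqrt s$) and the absence of any $\bSigma$-dependence in the stated bound come from, which strongly suggests the entrywise route is the intended one.

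First I would record the entrywise structure. For Kendall's tau ($m=2$) the remainder equals $\bDelta_{n}^{(2)}$, and for Spearman's rho ($m=3$) it equals $3\bDelta_{n}^{(2)}+\bDelta_{n}^{(3)}$; in either case the $(j,k)$ entry is a fixed linear combination of completely degenerate, mean-zero U-statistics $\Delta_{n;jk}^{(\ell)}$ whose kernels are the $\ell$-th Hoeffding projections of $h_{jk}$. Since $|h_{jk}|\le 1$ (Lemmas \ref{lem:kernel} and \ref{lm-kernel-r}), every such projected kernel is bounded by a numerical constant, whence both its variance parameter and the $L^2\to L^2$ operator norm of its conditional-expectation operator are $O(1)$, uniformly in $(j,k)$ and in $\bSigma$. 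I would also note that the diagonal entries vanish because $h_{jj}\equiv 1$.

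Next I would apply an exponential inequality for completely degenerate bounded U-statistics (of the Arcones--Gin\'e / Gin\'e--Lata\l a--Zinn type) to each entry. For an order-$2$ degenerate U-statistic the tail is governed by $\exp\!\big(-c\min\{n^{2}y^{2}/\sigma^{2},\,ny/\kappa\}\big)$, where $\sigma^{2}$ is the kernel variance and $\kappa$ its operator norm; because here $\sigma^{2}=O(1)$ and $\kappa=O(1)$, at the scale $y\asymp(\log d+t)/n\ge 1/n$ one sits in the Bernstein regime and the bound reads $\bbP\big(|\Delta_{n;jk}^{(2)}|\ge y\big)\le C\exp(-cny)$. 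The order-$3$ part arising for Spearman's rho has an even smaller ($n^{-3/2}$) fluctuation scale and is controlled at the same $y$ a fortiori. A union bound over the at most $d^{2}$ entries then yields $\max_{1\le j,k\le d}\big|(\matU_{n}-\bbE\matU_{n}-m\bDelone)_{jk}\big|\le C(t+\log d)/n$ with probability at least $1-e^{-t}$, and the norm comparison above gives $\norm{(\matU_{n}-\bbE\matU_{n}-m\bDelone)_{A\times A}}{S}\le s\max_{j,k}\big|(\matU_{n}-\bbE\matU_{n}-m\bDelone)_{jk}\big|$ simultaneously for all $|A|\le s$ on the same event, i.e. the asserted $Cs(\log d+t)/n$.

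The hard part is the entrywise tail estimate. One must invoke, or reprove (e.g. by decoupling to an independent copy, conditioning, and a Bernstein inequality), a degenerate U-statistic exponential inequality and verify that at the relevant scale $y\asymp(\log d+t)/n$ the dominant term is the linear-in-$y$ Bernstein term; equivalently, that the sup-norm-driven higher-order regimes of the Gin\'e--Lata\l a--Zinn bound only become active at the far larger scale $y\asymp n$, so that at our scale the $\exp(-cny)$ behavior is correct. The remaining ingredients---the boundedness bookkeeping for the Hoeffding projections, consistency of (\ref{var-Delta-2}) with the $1/n$ entrywise scale, and the passage from the entrywise maximum to the submatrix spectral norm---are routine.
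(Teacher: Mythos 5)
Your proposal follows essentially the same route as the paper: the paper likewise observes that the remainder is a bounded, degenerate-of-order-two U-statistic entrywise, invokes the Arcones--Gin\'e exponential inequality to get $\bbP\{|D_{jk}|>Ct/n\}\le 4e^{-t}$, takes a union bound over the $d^2$ entries, and passes to the submatrix spectral norm via $\max_{|A|\le s}\|\matD_{A\times A}\|_S\le s\|\matD\|_{\max}$. Your extra care about which regime of the Gin\'e--Lata{\l}a--Zinn bound is active, and the separate treatment of the order-$3$ component for Spearman's rho, is sound but not needed beyond what the cited degenerate-U-statistic inequality already delivers.
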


We state an inequality of \cite{WZ13} in Lemma \ref{lm8} (i) below and its extension 
to Spearman's rho in Lemma \ref{lm8} (ii). 

\begin{lemma} \label{lem:Esigdelt}\label{lm8}
(i) Let $\hbSigmat =(\Sigmahat^\tau_{jk})_{d\times d}$ 
be as in (\ref{eq:sigesttau}) and $\bDeltat = (\Deltajkt)_{d\times d}$ with $\Deltajkt = \htau_{jk}-\tau_{jk}$. 
Let $n_2=2\lfloor n/2\rfloor$ where $\lfloor x \rfloor$ is the integer part of $x$. Then, 
\bel{lem:Esigdelt-1}
\big\|(\hbSigma^{\tau} - \bSigma)_{A\times A}\big\|_S 
\leq \pi \big\|(\hmatT -\matT)_{A\times A}\big\|_S + \frac{s\pi^2}{8}\big\|\bDelta^\tau\big\|_{\max}^2, 
\eel
with \ $\bbP\big\{ \big\|\bDelta^\tau\big\|_{\max} > 2t \big\}\le d^2 e^{- n_2t^2}$ for all $t>0$. \\
(ii) Let $\hbSigmar =(\Sigmahat^\rho_{jk})_{d\times d}$ 
be as in (\ref{eq:sigestrho}) and $\bDeltar = (\Deltajkr)_{d\times d}$ with $\Deltajkr = \hrho_{jk}-\bbE\hrho_{jk}$. 
Let $n_3=3\lfloor n/3\rfloor$ where $\lfloor x \rfloor$ is the integer part of $x$. 
Then, 
\bel{lem:Esigdelr-1}
&& \big\|(\hbSigma^{\rho} - \bSigma)_{A\times A}\big\|_S 
\leq C_2\big\|(\hmatT -\matT)_{A\times A}\big\|_S + \frac{s\pi^2}{36}\big\|\bDelta^\rho\big\|_{\max}^2
+\frac{\pi s^{1/2}\|\bSigma_{A\times A}\|_{(2,\infty)}}{3(n+1)}
\eel
with $C_2=(\pi/3)(2-\sqrt{1-1/4})<1.2$, 
and $\bbP\big\{ \big\|\bDelta^\rho\big\|_{\max} > \sqrt{6}t \big\}\le d^2 e^{- n_3t^2}$ for all $t>0$.
\end{lemma}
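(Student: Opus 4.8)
The plan is to treat both parts by one device: a first-order expansion of the sine transforms in (\ref{eq:sigesttau})--(\ref{eq:sigestrho}) about the population values, which splits the restricted error $(\hbSigma-\bSigma)_{A\times A}$ into a \emph{linear} term (a Hadamard product of the increment matrix with a bounded ``derivative'' matrix), a \emph{quadratic remainder}, and, for Spearman's rho, a small \emph{bias} correction. For Kendall's tau write $\hSigjkt-\Sigma_{jk}=\sin(\tfrac{\pi}{2}\htau_{jk})-\sin(\tfrac{\pi}{2}\tau_{jk})=c_{jk}\Deltajkt$ with $c_{jk}=\tfrac{\pi}{2}\int_0^1\cos(\tfrac{\pi}{2}(\tau_{jk}+u\Deltajkt))\,du$, and split $c_{jk}=\tfrac{\pi}{2}\cos(\tfrac{\pi}{2}\tau_{jk})+r_{jk}$, where the Lipschitz property of cosine gives $|r_{jk}|\le\tfrac{\pi^2}{8}|\Deltajkt|$. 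The analogous expansion of $\hSigjkr-\Sigma_{jk}=2\sin(\tfrac{\pi}{6}\hrho_{jk})-2\sin(\tfrac{\pi}{6}\rho_{jk})$ produces the leading factor $\tfrac{\pi}{3}\cos(\tfrac{\pi}{6}\rho_{jk})$ multiplying $\hrho_{jk}-\rho_{jk}$, a remainder of order $\tfrac{\pi^2}{36}(\hrho_{jk}-\rho_{jk})^2$, and, after writing $\hrho_{jk}-\rho_{jk}=\Deltajkr+(\bbE\hrho_{jk}-\rho_{jk})$, a bias piece governed by (\ref{lm-kernel-4}) of Lemma \ref{lm-kernel-r}.

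The crux is the linear term. By (\ref{pop-eq}) we have $\cos(\tfrac{\pi}{2}\tau_{jk})=\sqrt{1-\Sigma_{jk}^2}$ and $\cos(\tfrac{\pi}{6}\rho_{jk})=\sqrt{1-\Sigma_{jk}^2/4}$, so the leading matrices are $\bGamma=(\sqrt{1-\Sigma_{jk}^2})$ and $\bGamma^{\rho}=(\sqrt{1-\Sigma_{jk}^2/4})$. I would control $\norm{\bGamma\circ\bDeltat}{S}$ and $\norm{\bGamma^{\rho}\circ\bDeltar}{S}$ through the Schur multiplier norm $\mu(\matA)=\sup_{\matM\neq 0}\norm{\matA\circ\matM}{S}/\norm{\matM}{S}$. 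Using the binomial expansion $\sqrt{1-y}=1-\sum_{\ell\ge1}a_\ell y^\ell$ with $a_\ell>0$ and $\sum_{\ell\ge1}a_\ell=1$, one writes $\bGamma=\bone\bone^{T}-\sum_{\ell\ge1}a_\ell\,\bSigma^{\circ 2\ell}$ and $\bGamma^{\rho}=\bone\bone^{T}-\sum_{\ell\ge1}a_\ell 4^{-\ell}\bSigma^{\circ 2\ell}$, each Hadamard power $\bSigma^{\circ 2\ell}$ being nonnegative-definite by the Schur product theorem. Since $\mu(\matM)=\max_i M_{ii}$ for nonnegative-definite $\matM$, $\mu$ is subadditive, and restriction to $A\times A$ preserves nonnegative-definiteness (with $(\bSigma^{\circ 2\ell})_{A\times A}=(\bSigma_{A\times A})^{\circ 2\ell}$), the diagonal bookkeeping $\sum_\ell a_\ell=1$ and $\sum_\ell a_\ell 4^{-\ell}=1-\sqrt{1-1/4}$ gives $\mu(\bGamma_{A\times A})\le 2$ and $\mu(\bGamma^{\rho}_{A\times A})\le 2-\sqrt{1-1/4}$. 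Multiplying by the prefactors $\tfrac{\pi}{2}$ and $\tfrac{\pi}{3}$ yields exactly the constants $\pi$ and $C_2=(\pi/3)(2-\sqrt{1-1/4})$ in front of $\norm{(\hmatT-\matT)_{A\times A}}{S}$ and $\norm{\bDeltar_{A\times A}}{S}$.

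The remaining pieces are bounded crudely. The quadratic remainder matrices have entries at most $\tfrac{\pi^2}{8}(\Deltajkt)^2\le\tfrac{\pi^2}{8}\norm{\bDeltat}{\max}^2$ and $\tfrac{\pi^2}{36}\norm{\bDeltar}{\max}^2$, and for an $s\times s$ matrix $\norm{\cdot}{S}\le\norm{\cdot}{F}\le s\norm{\cdot}{\max}$, which produces the $\tfrac{s\pi^2}{8}\norm{\bDeltat}{\max}^2$ and $\tfrac{s\pi^2}{36}\norm{\bDeltar}{\max}^2$ summands. For the Spearman bias matrix, $|\bbE\hrho_{jk}-\rho_{jk}|\le|\Sigma_{jk}|/(n+1)$ from (\ref{lm-kernel-4}) bounds each of its $A$-rows in $\ell_2$ by $\norm{\bSigma_{A\times A}}{(2,\infty)}/(n+1)$, and the elementary bound $\norm{\cdot}{S}\le\sqrt{s}\,\norm{\cdot}{(2,\infty)}$ on $A\times A$ yields the last summand $\pi s^{1/2}\norm{\bSigma_{A\times A}}{(2,\infty)}/\{3(n+1)\}$.

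Finally, the max-norm tail bounds come from Hoeffding's inequality for bounded-kernel U-statistics \cite{Hoeff63}. By Lemma \ref{lem:kernel} each $\htau_{jk}$ is an order-$2$ U-statistic with kernel in $[-1,1]$, so $\bbP\{\htau_{jk}-\tau_{jk}\ge 2t\}\le\exp(-2\lfloor n/2\rfloor(2t)^2/4)=e^{-n_2t^2}$; by Lemma \ref{lm-kernel-r} each $\hrho_{jk}$ is an order-$3$ U-statistic with kernel in $[-1,1]$, so $\bbP\{\hrho_{jk}-\bbE\hrho_{jk}\ge\sqrt{6}t\}\le\exp(-2\lfloor n/3\rfloor 6t^2/4)=e^{-n_3t^2}$. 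A two-sided estimate and a union bound over the fewer than $d^2$ off-diagonal entries give the stated $d^2e^{-n_2t^2}$ and $d^2e^{-n_3t^2}$. The main obstacle is the Schur-multiplier step of the second paragraph: obtaining the clean, dimension-free constants $\pi$ and $C_2$ (rather than a factor growing with $s$) rests on the sign-definite expansion $\sqrt{1-y}=1-\sum_{\ell\ge1}a_\ell y^\ell$ combined with the identity $\mu(\matM)=\max_iM_{ii}$ for nonnegative-definite $\matM$, both of which must survive restriction to $A\times A$.
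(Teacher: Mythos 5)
Your proposal is correct and follows essentially the same route as the paper: a first-order sine expansion whose linear term is a Hadamard product with $(\sqrt{1-\Sigma_{jk}^2})$ resp.\ $(\sqrt{1-\Sigma_{jk}^2/4})$, controlled via the binomial expansion of $\sqrt{1-y}$ together with the Schur product theorem and the multiplier identity for nonnegative-definite matrices (giving the constants $2$ and $2-\sqrt{1-1/4}$), plus the $s\|\cdot\|_{\max}^2$ bound for the quadratic remainder, the $\sqrt{s}\,\|\bSigma_{A\times A}\|_{(2,\infty)}$ bound for the Spearman bias, and Hoeffding's U-statistic inequality with a union bound for the tails. The only differences are cosmetic: the paper cites \cite{WZ13} for part (i) rather than reproducing this argument, and it centers the Spearman expansion at $\bbE\hrho_{jk}$ rather than at $\rho_{jk}$ so that the quadratic remainder involves $\Deltajkr$ exactly (note the paper's displayed bound in part (ii) should read $\|(\bDeltar)_{A\times A}\|_S$, which is what both your argument and the paper's own proof actually deliver).
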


\begin{proof}[Proof of Theorem \ref{th:GenUconc2}] 
We consider only $\hbSigmat$ as the case for $\hbSigmar$ is nearly identical.
It follows from Lemma \ref{lm8} that 
\bel{pf-th2-0}
&& \big\|(\hbSigma^{\tau} - \bSigma)_{A\times A}\big\|_S 
\leq \pi \big\|(\hmatT -\matT)_{A\times A}\big\|_S + C s(t+\log d)/n,\quad \forall\ |A|\le s, 
\eel
with at least probability $1-e^{-t}$. 
As in the decomposition in (\ref{outline-2}), 
\bel{outline-3}
&& \hmatT -\matT 
= \left\{\bDeltat - 2\bDelone\right\} + 2\left\{\bDelone-\bDelzero\right\} + 2\bDelzero. 
\eel
It follows from Lemma \ref{lm-exp-Delta-two} that with at least probability $1-e^{-t}$, 
\bel{pf-th2-1}
&& \max_{|A|\le s}\Big\|\left\{\bDeltat-2\bDelone\right\}_{A\times A}\Big\|_{S} 
\le C s(\log d + t)/n. 
\eel
By applying Lemma \ref{lm-exp-Delta-one-zero} to the $m$ sub-matrices with the union bound, 
\bel{pf-th2-2}
&& \norm{(\bDelone-\bDelzero)_{A\times A}}{S}
\le C\norm{\bSigma_{A\times A}}{F}/\sqrt{n}
\cr && \qquad\quad +C\norm{\bSigma_{A\times A}}{(2,\infty)}\norm{\bSigma_{A\times A}}{S}^{1/2}\sqrt{(t+\log m)/n},
\quad \forall\ A\in\scrA_{s,m},\eel
with at least probability $1-m\exp( - t - \log m)\ge 1- e^{-t}$. 
Similarly, Lemma \ref{lm-exp-zero} yields 
\bel{pf-th2-3}
\norm{(\bDelzero)_{A\times A}}{S} 
&\leq& C\|\bSigma_{A\times A}\|_S\sqrt{(s+t+\log m)/n}
\cr && + C\|\bSigma_{A\times A}\|_S(s+t+\log m)/n,\quad \forall\ A\in\scrA_{s,m}
\eel
with at least probability $1-e^{-t}$.  
The first term in (\ref{pf-th2-2}) is dominated by the first term in (\ref{pf-th2-3}) due to 
$\norm{\bSigma_{A\times A}}{F}\le \sqrt{s}\|\bSigma_{A\times A}\|_S$. 
Thus, applying (\ref{pf-th2-1}), (\ref{pf-th2-2}) and (\ref{pf-th2-3}) to (\ref{outline-3}) 
yields (\ref{th2-2}) via (\ref{pf-th2-0}). 
\end{proof}

\section{Discussion} \label{sec:Application}
We describe two applications of our concentration inequality in the $d>n$ case. 


\subsection{Tapering Estimate of Bandable Correlation Matrices}
We consider the Gaussian copula model in (\ref{eq:mod1}). 
We assume that the correlation matrix has a bandable structure in that the off-diagonal 
elements fall off to zero as we move further away from diagonal. 
There are several formulations of such bandability. 
As in \cite{CZZ10}, we consider the parameter class
\begin{align}
 \scrF_{\alpha}(M_{0},M_{1}) & = \left\{\bSigma: 
 \max_{j}\sum_{|i-j|>k}|\Sigma_{ij}| \leq M_{0}k^{-\alpha}\ \forall k, 
 \norm{\bSigma}{S} \leq M_{1}\right\}. 
\end{align}
We adopt the estimator of \cite{CZZ10} and plug in $\hbSigmat$ and $\hbSigmar$: 
\begin{align}\label{def:taper}
  \hbSigmatt = (w_{ij}\hSigma^{\tau}_{ij})_{d\times d} \quad \hbSigmart = (w_{ij}\hSigma^{\rho}_{ij})_{d\times d}
 \end{align}
where $w_{ij}$'s are defined as
\begin{align*}
 w_{ij} = \begin{cases}
           1 & \text{when } |i-j| \leq k/2\\
           2 - 2\dfrac{|i-j|}{k} & \text{when } k/2 < |i-j| < k\\
           0 & \text{otherwise}
          \end{cases}
\end{align*}

The nonparametric tapering estimator $\hbSigmart$ has been considered previously in \cite{XZ12a}, 
where an error bound 
\bes
\sup_{\bSigma\in \scrF_{\alpha}(M_{0},M_{1}) }\bbE_{\bSigma} \Big\| \hbSigmart - \bSigma\Big\|_{S}^2 \le C_{M_0,M_1}\Big(\frac{k^2\log d}{n} + k^{-2\alpha}\Big)
\ees
was established using a generalization of McDiarmid's inequality, where 
$\bbE_{\bSigma}$ is the expectation in the Gaussian copula model (\ref{eq:mod1}) with correlation 
$\bSigma$ in (\ref{eq:corr}), and $C_{M_0,M_1}$ is a constant depending on $M_0$ and $M_1$ only. 
It was mentioned in their paper that the above error bound may not be sharp 
as some key concentration inequalities were not available for rank-based estimators. 
Such key concentration inequalities are provided in Theorem \ref{th:GenUconc2} 
as the rate-optimal error bound in the following theorem demonstrates. 

\begin{theorem}\label{th:tapertaurho} 
Let $\bbE_{\bSigma}$ be the expectation under which (\ref{eq:mod1}) and (\ref{eq:corr}) hold. 
Consider the tapered estimators $\hbSigma_{(k)} = \hbSigmatt$ or $\hbSigma_{(k)}= \hbSigmart$ 
given in (\ref{def:taper}). Then, 
\bel{th3-1}&&
\sup_{\bSigma\in \scrF_{\alpha}(M_{0},M_{1}) }\bbE_{\bSigma} \Big\| \hbSigma_{(k)} - \bSigma\Big\|_{S}^2 \le C_{M_0,M_1}\Big(\frac{k+\log d}{n} + \frac{k^2(\log d)^2}{n^2}+ k^{-2\alpha}\Big)
\eel
for all $1\le k\le n$, where $C_{M_0,M_1}$ is a constant depending on $M_0$ and $M_1$ only. 
In particular, for $k = \min\big(n^{1/(2\alpha+1)},d\big)$ and $\log d\le \beta n^{\alpha/(1+2\alpha)}$, 
\bel{th3-2}
&& \sup_{\bSigma\in \scrF_{\alpha}(M_{0},M_{1}) }\bbE_{\bSigma} \Big\| \hbSigma_{(k)} - \bSigma\Big\|_{S}^2 \le C_{M_0,M_1}(1+\beta)\min\Big(n^{\frac{-2\alpha}{1+2\alpha}}+\frac{\log d}{n},\frac{d}{n}\Big). 
\eel
\end{theorem}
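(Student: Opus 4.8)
The plan is to follow the bias–variance decomposition standard for tapering estimators, reducing the variance part to the sparse spectral error bound of Theorem \ref{th:GenUconc2}. Write $\bSigma^{(k)} = (w_{ij}\Sigma_{ij})_{d\times d}$ for the tapered version of the true correlation matrix, so that by the triangle inequality
\begin{align*}
\norm{\hbSigma_{(k)} - \bSigma}{S} \le \norm{\hbSigma_{(k)} - \bSigma^{(k)}}{S} + \norm{\bSigma^{(k)} - \bSigma}{S}.
\end{align*}
The second (bias) term is deterministic. Since $w_{ij}=1$ for $|i-j|\le k/2$ and $0\le 1-w_{ij}\le 1$, the matrix $\bSigma - \bSigma^{(k)}$ is supported on $\{|i-j|>k/2\}$ with entries bounded by $|\Sigma_{ij}|$; bounding the spectral norm of this symmetric matrix by its maximum absolute row sum and invoking the bandability constraint $\max_j\sum_{|i-j|>k/2}|\Sigma_{ij}|\le M_0(k/2)^{-\alpha}$ gives $\norm{\bSigma^{(k)}-\bSigma}{S}\le C_{M_0}k^{-\alpha}$, which contributes the $k^{-2\alpha}$ term after squaring.

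For the variance term I would invoke the block decomposition of \cite{CZZ10}. Because the taper $(w_{ij})$ is supported on the band $|i-j|<k$, the masked error $\hbSigma_{(k)}-\bSigma^{(k)}$ can be written as a sum of a bounded number of matrices, each block-diagonal with diagonal blocks that are $A\times A$ restrictions of $\hbSigma-\bSigma$ with $|A|\le 2k$; using overlapping consecutive partitions of $\{1,\dots,d\}$ ensures every in-band pair lies in a common block. Consequently
\begin{align*}
\norm{\hbSigma_{(k)}-\bSigma^{(k)}}{S} \le C\max_{A\in\scrA}\norm{(\hbSigma-\bSigma)_{A\times A}}{S},
\end{align*}
where $\scrA$ is a collection of $m=\calO(d)$ diagonal submatrices of dimension $s\le 2k$. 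This is exactly the quantity controlled by Theorem \ref{th:GenUconc2}, which I would apply with $s\le 2k$ and $\log m = \calO(\log d)$; since $\norm{\bSigma_{A\times A}}{(2,\infty)}\le\norm{\bSigma_{A\times A}}{S}\le\norm{\bSigma}{S}\le M_1$, its right-hand side reduces to $CM_1\big(\sqrt{(k+t+\log d)/n}+(k+t+\log d)/n\big)+CM_1\sqrt{(t+\log d)/n}+Ck(t+\log d)/n$ with probability at least $1-e^{-t}$.

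To pass from this large-deviation bound to the expected squared error I would integrate the tail: a bound of the form $\bbP\{E> a+b\sqrt{t}+ct\}\le e^{-t}$ for all $t\ge0$ yields $\bbE E^2\le C(a^2+b^2+c^2)$. Reading off $a^2 = \calO\big((k+\log d)/n + k^2(\log d)^2/n^2\big)$, $b^2=\calO(1/n)$ and $c^2=\calO(k^2/n^2)$, and noting that $1/n$ and $k^2/n^2$ are dominated by the $a^2$ terms, produces the variance bound $C_{M_0,M_1}\big((k+\log d)/n + k^2(\log d)^2/n^2\big)$; combined with the bias this is (\ref{th3-1}). The specialization (\ref{th3-2}) is then routine: substituting $k=\min(n^{1/(2\alpha+1)},d)$ balances $k^{-2\alpha}$ against $k/n$, and the hypothesis $\log d\le\beta n^{\alpha/(1+2\alpha)}$ guarantees that both $(\log d)/n$ and $k^2(\log d)^2/n^2$ are absorbed into $n^{-2\alpha/(1+2\alpha)}$, while the case $d< n^{1/(2\alpha+1)}$ (where $k=d$) yields the $d/n$ branch of the minimum. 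The main obstacle is the block decomposition step: expressing the tapered error as a sum of boundedly many block-diagonal pieces with blocks of size $\calO(k)$ so that the sparse bound of Theorem \ref{th:GenUconc2} applies with $s=\calO(k)$ and only a logarithmic penalty $\log m=\calO(\log d)$. The tail-integration step is conceptually routine but requires care in tracking how the $t$-linear term $ck$ feeds the $k^2(\log d)^2/n^2$ contribution rather than a larger one.
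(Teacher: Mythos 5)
Your proposal follows the paper's own argument essentially step for step: the bias--variance split with the bias controlled by the bandability condition, the \cite{CZZ10} identity writing the tapered error as a bounded number of block-diagonal sums of $A_\ell\times A_\ell$ restrictions with $|A_\ell|\le 2k$, the application of Theorem \ref{th:GenUconc2} with $s\le 2k$ and $\log m=\calO(\log d)$, and the integration of the tail bound $\int_0^\infty\big(\tfrac{k+t+\log d}{n}+\tfrac{k^2(\log d+t)^2}{n^2}\big)e^{-t}dt$ to obtain the expected squared error. The proposal is correct and matches the paper's proof.
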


The rate-optimality of (\ref{th3-2}) was proved in \cite{CZZ10} and a combination of their analysis and 
Theorem \ref{th:GenUconc2} proves Theorem \ref{th:tapertaurho}. 
For $\matH = (H_{ij})_{d\times d} = \hbSigma - \bSigma$, 
\bes
(w_{ij}H_{ij})_{d\times d} = k^{-1}\sum_{\ell=1}^{d+2k-1} \matH_{A_\ell\times A_\ell} - 
k^{-1}\sum_{\ell=1}^{d+k-1} \matH_{B_\ell\times B_\ell}
\ees
where $A_\ell = \{1\vee (\ell - 2k),\ldots,\ell\}$ for $1\le \ell < p+2k$ and 
$B_\ell = \{1\vee(\ell - k),\ldots,\ell\}$ for $1\le \ell < p+k$. 
Let $A_{d+2k+\ell-1}=B_\ell$. 
Since $\{\matH_{A_{\ell+2jk} \times A_{\ell+2jk}}, \ell+2jk < d+2k\}$ are disjoint diagonal blocks 
for $\ell=1,\ldots,2k$ and $\{\matH_{A_{\ell+jk} \times A_{\ell+jk}}, \ell+jk \ge d+2k\}$ are disjoint 
diagonal blocks for $\ell=1,\ldots,k$, 
\bes
\Big\|(w_{ij}\hSigma_{ij})_{d\times d} - \bSigma\Big\|_S
\le \Big\|\big((1-w_{ij})\Sigma_{ij}\big)_{d\times d}\Big\|_S
+3\max_{\ell \le 2d+3k-2}\Big\|\matH_{A_\ell\times A_\ell}\Big\|_{S}
\ees
with $|A_\ell|\le 2k$. Since $w_{ij}=0$ for $|i-j|\le k$, the first term above is bounded by 
$M_0k^{-\alpha}$ in the class. 
It follows from Theorem \ref{th:GenUconc2} that the second term above is bounded by 
\bes
\bbE_{\bSigma}\max_{\ell \le 2d+3k-2}\Big\|\matH_{A_\ell\times A_\ell}\Big\|_{S}^2 
\le C_{M_0,M_1}\int_0^\infty\Big(\frac{k+t+\log d}{n}+\frac{k^2(\log d+t)^2}{n^2}\Big)e^{-t}dt, 
\ees
which implies (\ref{th3-1}). 

Although the estimator in (\ref{def:taper}) is not adaptive due to the requirement of $k$ as an input, 
this example demonstrates the utility of our results when Kendall's tau and Spearman's rho 
are used in place of the oracle sample covariance matrix. 
Based on the availability of the latent sample covariance matrix $\hbSigma^s$, \cite{CY12} proposed 
a block thresholding estimator to achieve the optimal rate in (\ref{th3-2}) without the knowledge of $\alpha$. 
An interesting problem is whether the same can be achieved using the Kendall's tau or Spearman's rho, 
as it seems to need a modification of Theorem \ref{th:GenUconc2} for off diagonal blocks of the 
error $\hbSigma-\bSigma$.  

\subsection{Principal Component Analysis} 
Theorem \ref{th:GenUconc} immediately yields the following theorem via 
the \cite{Weyl12} and \cite{DK70} inequalities. 

\begin{theorem}\label{th:modhighpca} 
Consider the Gaussian copula model in (\ref{eq:mod1}). 
Let $\bP_k$, $\hbP_k^\tau$ and $\hbP_k^\rho$ be the projections 
to the span of the $k$ leading eigenvectors of $\bSigma$, $\hbSigmat$ and $\hbSigmar$ 
respectively corresponding to their $k$ largest eigenvalues. 
Let $\lambda_j$ be the $j$-th largest eigenvalue of $\bSigma$. 
Then, for a certain numerical constant $C$, 
\bes
\max\Big(\bbE \left\|\hbP_k^\tau-\bP_k\right\|_S,
\bbE \left\|\hbP_k^\tau-\bP_k\right\|_S\Big)
 \leq C\|\bSigma\|_S(\sqrt{d/n}+d/n)/(\lambda_{k}-\lambda_{k+1}). 
\ees
\end{theorem}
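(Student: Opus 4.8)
The plan is to reduce the statement to a deterministic perturbation bound for spectral projections and then to feed in the expected spectral error bound of Theorem \ref{th:GenUconc}. Write $\hbSigma$ for either $\hbSigmat$ or $\hbSigmar$ and $\hbP_k$ for the corresponding rank-$k$ spectral projection, so that the quantity of interest is $\bbE\norm{\hbP_k - \bP_k}{S}$, where $\hbP_k$ and $\bP_k$ are orthogonal projections of equal rank $k$. The first step is to record two elementary facts: the identity $\norm{\hbP_k - \bP_k}{S} = \sin\theta_{\max}$, the sine of the largest principal angle between the two $k$-dimensional eigenspaces, and the trivial bound $\norm{\hbP_k - \bP_k}{S}\le 1$ valid for any pair of orthogonal projections.

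Next I would establish the deterministic inequality
\[
\norm{\hbP_k - \bP_k}{S} \le \frac{2\,\norm{\hbSigma - \bSigma}{S}}{\lambda_k - \lambda_{k+1}},
\]
valid for every realization. This follows from the Davis--Kahan $\sin\Theta$ theorem \citep{DK70} combined with Weyl's inequality \citep{Weyl12} through a short case split on the size of $E := \hbSigma - \bSigma$. When $\norm{E}{S}\le(\lambda_k-\lambda_{k+1})/2$, Weyl's inequality gives $\lambda_{k+1}(\hbSigma)\le\lambda_{k+1}+\norm{E}{S}$, so the top-$k$ eigenvalues of $\bSigma$ are separated from the tail eigenvalues of $\hbSigma$ by a gap of at least $(\lambda_k-\lambda_{k+1})/2$; the $\sin\Theta$ theorem then yields the displayed bound. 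When $\norm{E}{S}>(\lambda_k-\lambda_{k+1})/2$, the trivial projection bound $\norm{\hbP_k-\bP_k}{S}\le 1$ already implies it.

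Finally I would take expectations and apply the spectral error bound $\bbE\norm{\hbSigma - \bSigma}{S}\le C_0\norm{\bSigma}{S}(\sqrt{d/n}+d/n)$ from Theorem \ref{th:GenUconc}, which holds for both $\hbSigma=\hbSigmat$ and $\hbSigma=\hbSigmar$, to obtain
\[
\bbE\norm{\hbP_k - \bP_k}{S} \le \frac{2\,\bbE\norm{\hbSigma - \bSigma}{S}}{\lambda_k - \lambda_{k+1}} \le \frac{2C_0\norm{\bSigma}{S}(\sqrt{d/n}+d/n)}{\lambda_k - \lambda_{k+1}},
\]
which is the claimed bound with $C = 2C_0$ after taking the maximum over the two estimators.

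The only delicate point is the precise form of the Davis--Kahan theorem: the classical statement controls $\sin\Theta$ by the perturbation divided by the separation between the \emph{perturbed} and \emph{unperturbed} spectra, not by the clean population gap $\lambda_k-\lambda_{k+1}$. The case split above, which invokes the trivial projection bound on the large-perturbation event and Weyl's inequality on the complement, is exactly what converts the raw Davis--Kahan estimate into the population-gap form at the cost of an absolute constant. I expect this bookkeeping to be the main thing requiring care; the remaining steps are immediate once Theorem \ref{th:GenUconc} is in hand.
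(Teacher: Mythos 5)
Your proposal is correct and follows exactly the route the paper indicates: the paper states that Theorem \ref{th:modhighpca} follows immediately from Theorem \ref{th:GenUconc} via the Weyl and Davis--Kahan inequalities, and gives no further detail. Your case split on $\norm{\hbSigma-\bSigma}{S}$ versus half the eigengap, yielding the deterministic bound $\norm{\hbP_k-\bP_k}{S}\le 2\norm{\hbSigma-\bSigma}{S}/(\lambda_k-\lambda_{k+1})$ before taking expectations, is the standard and correct way to fill in that sketch.
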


Now we consider the problem of estimating the direction of a sparse leading eigenvector. 
We illustrate the utility of our sparse spectral error bound in the sparse PCA problem by plugging in 
$\{\hbSigmat,\hbSigmar\}$ in place of $\tbSigmas$ in a formulation of \cite{VL12}. 
In particular, we consider an integer $s<d$ to be an upper bound on the number of nonzero components 
of the principal eigenvector $\btheta_1$ of $\bSigma$. 
The following describes the sparse estimates of the principal eigenvector based on $\hbSigmat$ and $\hbSigmar$.
\begin{equation}
 \hbthetaspt = \argmax_{\bv \in \bbS^{d-1}:\norm{\bv}{0} \leq s} \left|\bv^{T}\hbSigmat\bv \right| \qquad \hbthetaspr = \argmax_{\bv \in \bbS^{d-1}:\norm{\bv}{0} \leq s} \left|\bv^{T}\hbSigmar\bv \right|
 \label{eq:spthetahattr}
\end{equation}
The following theorem provides the rate of convergence for sparse PCA. 

\begin{theorem}[Sparse PCA] \label{th:ulthighpca}
Consider the Gaussian copula model in (\ref{eq:mod1}). Let 
$(\lambda_{1}, \btheta_{1})$ be the leading eigenpair of $\bSigma$ with $\|\btheta_1\|_0\le s\to\infty$. 
Let $\lambda_{2}$ be the second largest eigenvalue of $\bSigma$. 
Let $\hbthetaspt$ and $\hbthetaspr$ be the estimate obtained by the optimization defined in (\ref{eq:spthetahattr}). 
If $t+\log d \le \beta\sqrt{(n/s)(t+\log(ed/s))}$, then 
for both $\hbtheta_{1;s}=\hbthetaspt$ and $\hbtheta_{1;s}=\hbthetaspr$ and 
some numeric constant $C>0$, 
\begin{align*}
 & \left|\sin\angle(\hbtheta_{1;s},\btheta_{1})\right| \leq \dfrac{C(1+\beta)}{\lambda_{1}-\lambda_{2}}\
 \Big(\norm{\bSigma}{S}+\norm{\bSigma}{S}^{1/2}\norm{\bSigma}{(2,\infty)}\Big)\sqrt{(t+s\log(ed/s))/n}
 \end{align*}
 with probability at least $1-e^{-t}$.
\end{theorem}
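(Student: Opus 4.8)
The plan is to run the standard optimization-driven sparse-PCA argument (as in \cite{VL12}), fed by the sparse spectral error bound of Corollary \ref{cor:GenUconcsumm}, but organized so that the final rate is \emph{linear}, not square-root, in that error. Write $\hbtheta$ for the estimate $\hbtheta_{1;s}\in\{\hbthetaspt,\hbthetaspr\}$, fix its sign so that $\hbtheta^T\btheta_1\ge 0$, set $\phi=\angle(\hbtheta,\btheta_1)$, and put $\matH=\hbSigma-\bSigma$ with $\hbSigma\in\{\hbSigmat,\hbSigmar\}$. Since $\hbtheta$ and $\btheta_1$ are both $s$-sparse, writing $A_0=\mathrm{supp}(\hbtheta)\cup\mathrm{supp}(\btheta_1)$ gives $|A_0|\le 2s$, and I set $M=\max_{|A|\le 2s}\norm{\matH_{A\times A}}{S}$, so that every quadratic or bilinear form $\bu^T\matH\bv$ with $\bu,\bv$ unit vectors supported in $A_0$ is at most $M$ in absolute value.

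First I would record two facts. Decomposing $\hbtheta=\cos\phi\,\btheta_1+\sin\phi\,\bw$ with $\bw\perp\btheta_1$, $\norm\bw2=1$ (note $\bw$ is supported in $A_0$), the eigen-relation $\bSigma\btheta_1=\lambda_1\btheta_1$ and $\bw^T\bSigma\bw\le\lambda_2$ give the curvature bound $\btheta_1^T\bSigma\btheta_1-\hbtheta^T\bSigma\hbtheta\ge(\lambda_1-\lambda_2)\sin^2\phi$. On the other hand, optimality of $\hbtheta$ gives $\hbtheta^T\hbSigma\hbtheta\ge\btheta_1^T\hbSigma\btheta_1$ (justified below). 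Subtracting the population terms and discarding the nonpositive difference of $\hbSigma$-forms yields
\[(\lambda_1-\lambda_2)\sin^2\phi\le\hbtheta^T\matH\hbtheta-\btheta_1^T\matH\btheta_1.\]
The crucial step is to expand the right-hand side with the same decomposition,
\[\hbtheta^T\matH\hbtheta-\btheta_1^T\matH\btheta_1=-\sin^2\phi\,\btheta_1^T\matH\btheta_1+2\sin\phi\cos\phi\,\btheta_1^T\matH\bw+\sin^2\phi\,\bw^T\matH\bw.\]
Since $\btheta_1,\bw$ are supported in $A_0$, each form is bounded by $M$, so the right side is at most $2\sin^2\phi\,M+2\sin\phi\,M$; the surviving cross term $2\sin\phi\cos\phi\,\btheta_1^T\matH\bw$ is exactly what is linear in $\sin\phi$. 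Dividing through by $\sin\phi$ (the claim is trivial when $\sin\phi=0$) gives $\sin\phi\,(\lambda_1-\lambda_2-2M)\le 2M$.

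Finally I would apply Corollary \ref{cor:GenUconcsumm} with $2s$ in place of $s$ (absorbing this into the constant and $\beta$), together with $\norm{\bSigma_{A\times A}}{S}\le\norm\bSigma S$ and $\norm{\bSigma_{A\times A}}{(2,\infty)}\le\norm\bSigma{(2,\infty)}$; the hypothesis $t+\log d\le\beta\sqrt{(n/s)(t+\log(ed/s))}$ is precisely what makes the $\sqrt{\cdot}$ term dominate there, so that with probability at least $1-e^{-t}$
\[M\le C(1+\beta)\Big(\norm\bSigma S+\norm\bSigma S^{1/2}\norm\bSigma{(2,\infty)}\Big)\sqrt{(t+s\log(ed/s))/n}.\]
If $M\le(\lambda_1-\lambda_2)/4$ then $\lambda_1-\lambda_2-2M\ge(\lambda_1-\lambda_2)/2$ and $\sin\phi\le 4M/(\lambda_1-\lambda_2)$, which is the asserted bound; this same regime gives $M<\lambda_1/2$, whence (using $\bSigma\succeq0$, so $\hbtheta^T\bSigma\hbtheta\ge0$, and $\btheta_1^T\hbSigma\btheta_1\ge\lambda_1-M>M$) the value $\hbtheta^T\hbSigma\hbtheta\ge-M$ cannot lie on the negative branch of $|\hbtheta^T\hbSigma\hbtheta|\ge\lambda_1-M$, forcing $\hbtheta^T\hbSigma\hbtheta\ge\lambda_1-M>0$ and hence the optimality inequality used above. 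If instead $M>(\lambda_1-\lambda_2)/4$, the displayed bound on $M$ pushes the theorem's right-hand side above a fixed constant, so $|\sin\phi|\le1$ finishes after enlarging $C$.

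The main obstacle is exactly this last point: because the sine-transformed $\hbSigmat,\hbSigmar$ need not be positive semidefinite (unlike the oracle $\tbSigmas$), the sparse maximizer of $|\bv^T\hbSigma\bv|$ is not automatically the leading eigendirection, so the optimality inequality must be earned through the small-$M$ regime rather than assumed; the remainder is bookkeeping of the $2s$-sparsity of $\bw$ so that Corollary \ref{cor:GenUconcsumm} applies and of the condition that kills the higher-order term there.
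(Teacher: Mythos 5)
Your proposal is correct and follows the same route the paper takes: the paper's entire proof is the one-line remark that Theorem~\ref{th:ulthighpca} follows from Corollary~\ref{cor:GenUconcsumm} together with a perturbation result of the Vu--Lei/\cite{WHL13} type, and your curvature-plus-optimality argument with the cross-term expansion is exactly that perturbation step, written out so the rate is linear in $\max_{|A|\le 2s}\|(\hbSigma-\bSigma)_{A\times A}\|_S$. The one detail you add beyond what the paper gestures at --- ruling out the negative branch of $|\bv^T\hbSigma\bv|$ because $\hbSigmat,\hbSigmar$ need not be positive semidefinite --- is a genuine and correctly handled point that the paper's omitted proof would also have to address.
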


 Theorem \ref{th:ulthighpca} follows from Corollary \ref{cor:GenUconcsumm} by 
 an application of a similar result from \cite{WHL13}. We omit the proofs.

\appendix
\section{Auxiliary Lemmas}\label{app:AuxLemmas}

\begin{proof}[Proof of Lemma \ref{lem:kernel}] By (\ref{eq:tauy}), the kernel for Kendall's tau is 
\bes
h_{j,k}(\bx_1,\bx_2)=\sgn(x_{1j}-x_{2j})\sgn(x_{1k}-x_{2k}). 
\ees
The definition of $\hbar(x,y,\rho)$ in (\ref{hbar}) directly yields 
(\ref{lm-kernel-1}) and the first identity of (\ref{lm-kernel-3}). It remains to verify the 
properties of $g(x,y,\rho)$ in (\ref{lm-kernel-2}) and compute the expectation in (\ref{lm-kernel-3}). 

We first prove the following inequality: 
\begin{align}\label{pf-lm-g-1}
\max_{y} \Big|\Phi(y)-\Phi(y\sqrt{1-\rho^2})\Big| \le |\rho|/2,\ \forall -1\le\rho\le 1. 
\end{align}
For fixed $\rho$, the above maximum is attained, 
$(d/dy)\{\Phi(y) - \Phi(y\sqrt{1-\rho^2})\}=0$, 
when $e^{-y^2/2}=\sqrt{1-\rho^2}e^{-y^2(1-\rho^2)/2}$ or equivalently 
$(1-\rho^2)e^{y^2\rho^2}=1$. Let $y_\rho = \rho^{-1}\sqrt{-\log(1-\rho^2)}$ be the solution. 
Since the equality is attained in (\ref{pf-lm-g-1}) at $\rho=1$, (\ref{pf-lm-g-1}) is a consequence of 
\bel{pf-lm-kernel-1}
&& \frac{d}{d \rho}\frac{\Phi(y_\rho)- \Phi(y_\rho\sqrt{1-\rho^2})}{\rho}
\cr &=& \frac{\varphi(y_\rho\sqrt{1-\rho^2})}{\sqrt{1-\rho^2}} - \frac{\Phi(y_\rho)- \Phi(y_\rho\sqrt{1-\rho^2})}{\rho^2}
\\ \nonumber &\ge& 0. 
\eel
By the monotonicity of the normal density $\varphi(t)$ in $|t|$, 
\bes
\Phi(y_\rho)- \Phi(y_\rho\sqrt{1-\rho^2})\le y_\rho\big(1-\sqrt{1-\rho^2}\big)\varphi(y_\rho\sqrt{1-\rho^2}). 
\ees
Since $y_\rho\rho = \sqrt{-\log(1-\rho^2)}\le \sqrt{\rho^2/(1-\rho^2)}$, (\ref{pf-lm-kernel-1}) follows from 
\bes
y_\rho\big(1-\sqrt{1-\rho^2}\big) = \frac{y_\rho\rho^2}{1+\sqrt{1-\rho^2}}
\le \frac{\rho^2}{\sqrt{1-\rho^2}}. 
\ees
This completes the proof of (\ref{pf-lm-g-1}). 

The joint normal density can be factorized as 
$\varphi_\rho(u,v) = \varphi(u)\varphi_\rho(v|u)$ with the conditional density 
$\varphi_\rho(v|u)\sim N(\rho u,1-\rho^2)$. By (\ref{hbar}), 
\bel{g(x,y)}
g(x,y,\rho) 
&=& \int \sgn(x-u)\varphi(u)\Big\{\int \sgn(y-v)\big\{\varphi_\rho(v|u)-\varphi(v)\big\}dv\Big\}du
\cr &=& \int \sgn(x-u)\varphi(u)\Big\{2\int_{-\infty}^y\big\{\varphi_\rho(v|u)-\varphi(v)\big\}dv\Big\}du
\cr &=& 2\int \sgn(x-u)\varphi(u)\Big\{\Phi((y-\rho u)/\sqrt{1-\rho^2}) - \Phi(y)\Big\}du. 
\eel
This gives the first part of (\ref{lm-kernel-2}) since 
$|\Phi((y-\rho u)/\sqrt{1-\rho^2})-\Phi(y-\rho u)|\le |\rho|/2$ by (\ref{pf-lm-g-1}) 
and $|\Phi(y-\rho u) - \Phi(y)|\le |\rho u|/\sqrt{2\pi}$. 

Similarly, since $\sgn(x-u) = 2I\{u\le x\}-1$, 
\bes
\frac{\pa}{\pa x}g(x,y,\rho) 
&=& \frac{\pa}{\pa x} 4\int_{-\infty}^x \varphi(u)\Big\{\Phi((y-\rho u)/\sqrt{1-\rho^2}) - \Phi(y)\Big\}du
\cr &=& 4\varphi(x)\Big\{\Phi((y-\rho x)/\sqrt{1-\rho^2}) - \Phi(y)\Big\}.
\ees
It follows that 
\bes
\Big|\frac{\pa}{\pa x}g(x,y,\rho)\Big|
&=& 4\varphi(x)\Big|\Phi((y-\rho x)/\sqrt{1-\rho^2}) - \Phi(y)\Big|
\cr &\le& 4\varphi(x)\Big(\frac{|\rho x|}{\sqrt{2\pi}}+\frac{|\rho|}{2}\Big). 
\ees
This gives the second part of (\ref{lm-kernel-2}) due to 
\bes
\max_{x>0}4\varphi(x)(x/\sqrt{2\pi}+1/2)\le 0.987 < 1. 
\ees

For $j\neq k$,  (\ref{pop-rho}) gives 
\bes
\bbE\hbar_0(X_{1j},X_{1k},0)=\bbE\sgn(X_{1j}-X_{2j})\sgn(X_{1k}-X_{3k}) = \rho_{jk}/3.
\ees
Since $U=\Phi(X_1)\sim\ $uniform$(0,1)$, 
$\int \hbar_0^2(x)\varphi(x)dx = 4\Var(U)=1/3$. 
The second  identity of (\ref{lm-kernel-3}) follows. 
\end{proof}

\begin{proof}[Proof of Lemma \ref{lm-kernel-r}] 
We need to include the sample size $n$ in the subscript. 
As in \cite{Hoeff48}, Spearman's rho can be written as 
\begin{equation}
 \hrho_{n,jk} = \dfrac{n-2}{n+1}u_{n,jk}+ \dfrac{3}{n+1}\htau_{n,jk}
 \label{eq:RUT}
\end{equation}
where $u_{n,jk}$ is a U-statistic of order 3 with kernel 
\bel{pf-lm-kernel-a1}
h^*_{jk}(\bx_1,\bx_2,\bx_3) = 3\sgn(x_{1,j}-x_{2j})\sgn(x_{1k}-x_{3k}). 
\eel
For $x\in [0,\pi/2]$, both $\sin x$ and $\sin x - 2\sin(x/3)$ are concave functions with $\sin x - 2\sin(x/3)=0$ at 
the two endpoints, so that $\sin(2x/3) \le 2\sin(x/3) \le \sin x$. Thus, with $x=\pi|\rho_{jk}|/2$, (\ref{pop-eq}) implies that 
\bel{pf-lm-kernel-4}
\sgn(\tau_{jk}) =\sgn(\rho_{jk}),\ (\pi/3)|\rho_{jk}| \le (\pi/2)|\tau_{jk}| \le (\pi/2)|\rho_{jk}|. 
\eel
Since $\bbE u_{jk}=\rho_{jk}$, $|\bbE\hrho_{jk} - \rho_{jk}|= 3|\rho_{jk}-\tau_{jk}|/(n+1)\le |\rho_{jk}|/(n+1)$. 
This gives (\ref{lm-kernel-4}) as $|\rho_{jk}|\le|\Sigma_{jk}|$ by the concavity of $\sin(t)$ in $(0,\pi/6)$.  
Since $u_{n,jk}$ and $\htau_{n,jk}$ are U-statistics with kernel independent of $n$, 
$\hrho_{n,jk}$ is a U-statistic with kernel 
\bel{pf-lm-kernel-a2}
h_{jk}^\rho(\bX_1,\bX_2,\bX_3) 
= \dfrac{n-2}{n+1}u_{3,jk}+ \dfrac{3}{n+1}\htau_{3,jk}. 
\eel
Since $|u_{3,jk}|=|4\hrho_{3,jk}-3\htau_{3,jk}|\le 1$ always holds, 
(\ref{lm-kernel-5}) follows. 

Let $\gbar(x,\rho) = \int \hbar(x,y,\rho)\varphi(y)dy$. It follows from (\ref{pf-lm-kernel-a1}) that 
\bes
\bbE\Big[u_{3,jk}\Big|\bX_1=\bx\Big] = \hbar(x_{j},x_{k},0) + \gbar(x_{j},\Sigma_{jk})+\gbar(x_{k},\Sigma_{jk}). 
\ees
Similarly, 
$\bbE\big[3\htau_{3,jk}\big|\bX_1=\bx\big] = 2\hbar(x_{j},x_{k},\Sigma_{jk})+\tau_{jk}$. Thus, we may take
\bes
\hbar^\rho(x_j,x_k,\Sigma_{jk}) 
&=&  \dfrac{n-2}{n+1}\Big(\hbar(x_{j},x_{k},0) + \gbar(x_{j},\Sigma_{jk})+\gbar(x_{k},\Sigma_{jk})\Big) 
\cr && + \dfrac{2}{n+1}\hbar(x_{j},x_{k},\Sigma_{jk})
\ees
with $c_{jk}=\tau_{jk}/(n+1)$ in (\ref{eq:hjk}). 
Since $\gbar(x,0)=\int \hbar(x,0)\hbar(y,0)\varphi(y)dy=0$, (\ref{lm-kernel-8}) holds. Moreover, 
with $g(x,y,\rho) = \hbar(x,y,\rho) - \hbar(x,y,0)$ as in (\ref{lm-kernel-2}),  
\bes
\hbar^\rho(x,y,\rho)- \hbar^\rho(x,y,0) 
= \dfrac{n-2}{n+1}\Big(\gbar(x,\rho)+\gbar(y,\rho)\Big) + \dfrac{2}{n+1}g(x,y,\rho), 
\ees
so that (\ref{lm-kernel-6}) and (\ref{lm-kernel-7}) are consequences of 
\begin{align}\label{lm-g-2}
\big|\gbar(x,\rho)\big| \le |\rho|\Big(\frac{\sqrt{2}}{\pi}+\frac{1}{2}\Big),\quad 
\Big|\frac{\pa}{\pa x}\gbar(x,\rho)\Big| \le |\rho|, 
\end{align}

Since $\int\sgn(x-u)\varphi(x)dx = - \hbar_0(u)$, (\ref{g(x,y)}) and (\ref{pf-lm-g-1}) yield  
\bes
\Big|\gbar(y,\rho)\Big| 
&=& \Big| 2 \int \hbar_0(u)\varphi(u)\Big\{\Phi((y-\rho u)/\sqrt{1-\rho^2}) - \Phi(y)\Big\}du\Big|
\cr  &\le& 2\int \Big|\hbar_0(u)\big(\rho/2+\rho u/\sqrt{2\pi}\big)\Big|\varphi(u)du 
\ees
Since $\int |\hbar_0(u)|\varphi(u)du = \int_0^1|2x-1|dx =1/2$ and 
\bes
\int |\hbar_0(u)u|\varphi(u)du = -2\int_0^\infty \hbar_0(u)d\varphi(u) = 2\int \varphi^2(u)du = 1/\sqrt{\pi}, 
\ees
we have $|\gbar(y,\rho)|\le |\rho|(1/2+\sqrt{2}/\pi)$. In addition, (\ref{lm-kernel-2}) yields 
\bes
\Big|\frac{\pa}{\pa x}\gbar(x,\rho)\Big|
\le \max_{x,y} \Big|\frac{\pa}{\pa x}g(x,y,\rho)\Big|\le |\rho|. 
\ees
Hence, (\ref{lm-g-2}) holds and the proof is complete. 
\end{proof}

\begin{proof}[Proof of Lemma \ref{lm-exp-sum}] 
By Lemmas \ref{lem:kernel} and \ref{lm-kernel-r}, 
both Kendall's tau and Spearman's rho are U-statistics with kernel bounded by 1, 
so that (\ref{var-Delta-2}) holds. 
By (\ref{lm-kernel-2}) and (\ref{lm-kernel-6}), (\ref{g-bound}) holds, so that 
(\ref{Delta-01-bd}) holds. Since completely degenerate U-statistics of order two or higher 
are orthogonal to U-statistics of order 1, (\ref{var-Delta-2}) and (\ref{Delta-01-bd}) yield
\bes
&&\bbE \norm{(\matU_{n} - \bbE \matU_{n}) - m\bDelzero}{F}^2
\cr &\le& \frac{m(m-1)d(d-1)}{n(n-1)}+m^2\Big(C_1^2\sum_{j\neq k}\frac{\Sigma_{jk}^2}{n}+\frac{4d}{45 n}\Big). 
\ees
Inequality (\ref{lm-exp-sum-1}) 
follows from $C_1^2\ge 2+4/45$ and $\sum_{j\neq k}\Sigma_{jk}^2=\|\bSigma\|_F^2-d$. 
\end{proof}

\begin{proof}[Proof of Lemma \ref{lm-exp-zero}]
Let $N_{\epsa}$ be the largest number of $\eps$-balls one can pack in the $(1+\eps)$-ball centered at the origin 
and $\{\buj, j\le N_{\epsa}\}$ 
be the centers of such $\eps$-balls in one of such configurations. 
From straight forward volume comparison we have 
$N_{\epsa}\eps^d \leq (1+\epsa)^{d}$. 
For each $\bu\in\Sd$, $\|\bu-\buj\|_2\le 2\eps$ for some $j\le N_\eps$, so that 
 \begin{align*}
  \left|\bu^{T}\bDelzero\bu\right| & \leq \left|\buj^{T}\bDelzero\buj\right| + \left|(\bu-\buj)^{T}\bDelzero(\bu+\buj)\right|\\
  & \leq \left|\buj^{T}\bDelzero\buj\right| + 2\epsa(2+2\epsa) \norm{\bDelzero}{S}. 
 \end{align*}
 It follows that 
 \begin{align}
  \norm{\bDelzero}{S} \leq \sup_{j \leq N_{\epsa}} \frac{\big|\buj^{T} \bDelzero \buj\big|}{1-4\epsa(1+\eps)},\quad 
  N_{\epsa} \le (1+1/\eps)^d. 
    \label{eq:specnet}
 \end{align}
 Since $\bX$ has iid $N(0,\bSigma)$ rows, it can be written as 
 $\bX = \bZ\bSigma^{1/2}$ with a standard normal matrix $\bZ\in\Re^{n\times d}$. 
Let $\hbarzero{\matX}$ 
be the $n\times d$ matrix with elements $\hbarzero{X_{ij}}=2\Phi(X_{ij})-1$ and 
\bes
\fu{\bZ} = \norm{\hbarzero{\bZ\bSigma^{1/2}}\bu}{2}/\sqrt{n}. 
\ees 
By (\ref{Delta-zero}), $\bDelzero$ has elements 
$(\bbE_n-\bbE)\hbarzero{x_j}\hbarzero{x_k}$ so that  
\bel{pf-lm-1-1}
\bu^{T} \bDelzero \bu = f_{\bu}^2(\bZ) - \bbE f_{\bu}^2(\bZ). 
\eel
Since $(d/dt)\Phi(t)\le 1/\sqrt{2\pi}$, 
for any $\matV,\matW \in \Re^{n\times d}$ we have 
\begin{align*}
 |\fu{\matV} -\fu{\matW}| \leq \sqrt{\dfrac{2}{n\pi}} \norm{(\matV-\matW)\bSigma^{1/2}}{F}
\leq \sqrt{\dfrac{2\norm{\bSigma}{S}}{n\pi}} \norm{\matV-\matW}{F}\end{align*}
Thus, the Lipschitz norm of $\fu{\cdot}$ is bounded by $\sqrt{2\norm{\bSigma}{S}/(n\pi)}$. 
By the Gaussian concentration inequality \citep{Borell75}, we have 
\begin{align}
 \bbP\Big\{ \Big|f_{\bu}(\bZ) - \bbE f_{\bu}(\bZ)\Big| > t \sqrt{2\|\bSigma\|_{S}/(\pi n)}\Big\} \le 2e^{-t^2/2}. 
 \label{eq:h0lipconc1}
\end{align}
It follows that 
\begin{align*}
\bbE f_{\bu}^2(\bZ) - \Big(\bbE f_{\bu}(\bZ)\Big)^2 & = \Var\Big(f_{\bu}(\matX)\Big) \le \frac{2\|\bSigma\|_S}{\pi n}\int_0^\infty e^{-t^2/2}dt^2 
 = \frac{4\|\bSigma\|_S}{\pi n}. 
\end{align*}
We note that $\bbE f_{\bu}^2(\bZ)=\bu^T\matR\bu/3\le \norm{\matR}{S}/3$ as in (\ref{Delta-zero2}), so that
by (\ref{pf-lm-1-1}) 
\begin{align*}
\big|\bu^T \bDelzero \bu\big| & \leq \Big| f_{\bu}^2(\matX) - \Big(\bbE f_{\bu}(\matX)\Big)^2\Big| + \frac{4\|\bSigma\|_S}{\pi n}\\
& \leq \Big( f_{\bu}(\matX) - \bbE f_{\bu}(\matX)\Big)^2 + 2\Big(\|\matR\|_S/3\Big)^{1/2}\Big| f_{\bu}(\matX) - \bbE f_{\bu}(\matX)\Big| + \frac{4\|\bSigma\|_S}{\pi n}. 
\end{align*}
This inequality and (\ref{eq:specnet}) yield 
\begin{align}
\|\bDelzero\|_{S} \le \frac{\zeta_{n}^2 +2\Big(\|\matR\|_S/3\Big)^{1/2}\zeta_{n}+ 4\|\bSigma\|_S/(\pi n)}
{1-4\epsa(1+\epsa)}
\label{eq:Edelzero2}
\end{align}
with $\zeta_{n} = \max_{j\le (1+1/\eps)^d}\Big| f_{\buj}(\matX) - \bbE f_{\buj}(\matX)\Big|$. 
It follows from (\ref{eq:h0lipconc1}) that 
\bel{pf-lm-1-2}
&& \bbP\Big\{ \zeta_{n} > t \sqrt{2\|\bSigma\|_S/(\pi n)}\Big\}\le 2(1+1/\epsa)^d e^{-t^2/2}. 
\eel
Let $x_{*} = 2(d\log(1+1/\eps)+\log 2)$. We have 
\begin{align*}
 \bbE \zeta_{n}^2  & \leq  \frac{2\|\bSigma\|_S}{\pi n}\int_0^\infty \min\Big\{2(1+1/\eps)^d e^{-t^2/2},1\Big\}dt^2
= \frac{2\|\bSigma\|_S}{\pi n}\big(x_{*}+2\big)
\end{align*}
Taking $\epsa$ satisfying $\epsa(1+\eps)=1/20$, we find 
$1/(1-4\eps(1+\eps))=5/4$ and $\log(1+1/\eps)\le \pi$, 
so that $x_*\le 2(\pi d + \log 2)$ and 
\bes
\bbE \zeta_{n}^2\le 4\|\bSigma\|_S\big(d/n + (1+\log 2)/(\pi n)\big).
\ees
Combining this with (\ref{eq:Edelzero2}), we have 
\bes
\bbE\|\bDelzero\|_{S} 
&\le& (5/4)\big\{\bbE \zeta_{n}^2 +2(\|\matR\|_S/3)^{1/2}\bbE\zeta_{n}+ 4\|\bSigma\|_S/(\pi n)\big\}
\cr &\le& 5\|\bSigma\|_S\big\{d/n + (2+\log 2)/(\pi n)\big\}
\cr && +5\big(\|\bSigma\|_S\|\matR\|_S/3\big)^{1/2}\big(d/n+(1+\log 2)/(\pi n)\big)^{1/2}. 
\ees
This yields (\ref{lm-exp-zero-1}) due to $2+\log 2\le \pi$ and $\|\matR\|_S\le\|\bSigma\|_S$. Moreover, by (\ref{pf-lm-1-2})
\bes
&& \bbP\Big\{ \zeta_{n} > \sqrt{2\pi d+2t^2}\sqrt{2\|\bSigma\|_S/(\pi n)}\Big\}\le 2 e^{\pi d-(2\pi d+2t^2)/2}=2e^{-t^2}
\ees
and outside this event (\ref{eq:Edelzero2}) gives 
\bes
\|\bDelzero\|_{S} &\le & 5\|\bSigma\|_S(d/n+(t^2+1)/(\pi n)) 
\cr && +5\big(\|\bSigma\|_S\|\matR\|_S/3\big)^{1/2}\sqrt{d/n+t^2/(\pi n)}. 
\ees
This completes the proof due to $\|\matR\|_S\le\|\bSigma\|_S$. 
\end{proof}

\begin{proof}[Proof of Lemma \ref{lm5}] 
(i) Let $x=(\pi/2)\tau_{jk}$ and $y =(\pi/2)\Deltajkt$ so that 
$\hSigma_{jk}=\sin(x+y)$ and $\Sigma_{jk}=\sin x$. 
Because $\sin(x+y)-\sin x - y = (\cos x-1)y - \int_0^y (y-t)\sin(x+t) dt$, 
\bes
\Big|\hSigma^{\tau}_{jk} -\Sigjk -(\pi/2)\Deltajkt\Big|
\le \frac{2|xy|}{\pi} + \frac{y^2}{2}
\le \frac{\pi}{2}\Big|\tau_{jk}\Deltajkt\Big| + \frac{\pi^2}{8}\Big|\Deltajkt\Big|^2. 
\ees
Since $\htau_{jk}$ is a U-statistic of order $m=2$ and a sign kernel in (\ref{eq:tauy}), 
the Hoeffding decoupling argument gives 
$\bbE(\Deltajkt)^2\le \bbE\big(2\,\hbox{Bin}(n_2,p_{jk})/n_2-2 p_{jk}\big)^2\le 1/n_2$ and 
\bes
\bbE (\Deltajkt)^4 \le \bbE\Big(2\,\hbox{Bin}(n_2,p_{jk})/n_2-2 p_{jk}\Big)^4 \le 3/n_2^2, 
\ees
where $p_{jk}=(1+\tau_{jk})/2$. 
Since $\sum_{j\neq k}\tau_{jk}^2\le \sum_{j\neq k}\Sigma_{jk}^2=\|\bSigma\|_F^2-d$, we have 
\bes
\sum_{j,k} \bbE \Big|\tau_{jk}\Deltajkt\Big|^2\le \frac{\|\bSigma\|_F^2-d}{n_2},\quad  
\sum_{j,k} \bbE  \Big|\Deltajkt\Big|^4\le \frac{3d^2}{n_2^2}. 
\ees
Consequently, (\ref{lm5-1}) holds. 

(ii) Let $x=(\pi/6)\bbE\hrho_{jk}$, $y =(\pi/6)\Deltajkr$ and $z = (\pi/6)(\bbE\hrho_{jk}-\rho_{jk})$ so that 
$\hSigma_{jk}=2\sin(x+y)$ and $\Sigma_{jk}=2\sin(x-z)$. 
Due to $|z|\le (\pi/6)|\Sigma_{jk}|/(n+1)$ by (\ref{lm-kernel-4}), 
\bes
\left |\hSigjkr -\Sigma_{jk} - \frac{\pi}{3}\Delta^{\rho}_{jk}\right| 
&=& 2\Big|\sin(x+y) - \sin(x-z) - y\Big|
\cr &\leq& \frac{4|xy|}{\pi} + y^2+ 2|z|
\\ \nonumber &\le &\frac{\pi}{9}\Big|\Sigma_{jk}\Deltajkr\Big| + \frac{\pi^{2}}{36}|\Deltajkr|^{2}
+\frac{\pi |\Sigma_{jk}|}{3(n+1)}. 
\ees
Similar to part (i), (\ref{lm5-2}) follows from $\bbE(\Deltajkt)^2\le 1/n_3$ and  
$\bbE(\Deltajkt)^4\le 3/n_3^2$. \end{proof}

\begin{proof}[Proof Of Lemma \ref{lm-exp-Delta-one-zero}] 
We write 
\bes
\bDelone-\bDelzero = (\bbEn-\bbE)\matG = n^{-1}\sum_{i=1}^n \matG(\bX_i)-\bbE \matG(\bX_1)
\ees
with $\matG(\bx) = \big(g_{jk}(\bx)\big)_{d\times d}$, 
where $g_{jk}(\bx) = \hbar^\rho(x_{j},x_{k},\Sigjk) - \hbar^\rho(x_{j},x_{k},0)$ for Kendall's tau and 
$g_{jk}(\bx) = \hbar(x_{j},x_{k},\Sigjk) - \hbar(x_{j},x_{k},0)$ for Spearman's rho. 
It follows from (\ref{lm-kernel-2}) and (\ref{lm-kernel-7}) that 
 \begin{align*}
  |g_{jk}(\by) - g_{jk}(\bx) | \leq |\Sigjk| \left\{|y_{j}-x_{j}| + |y_{k}-x_{k}|\right\}. 
 \end{align*}
This inequality implies that for all $d$-dimensional vectors $\bx$ and $\by$, 
\begin{align*}
 \norm{\matG(\bx)-\matG(\by)}{S} 
 & \leq \max_{\bu:\norm{\bu}{2}=1}\sum_{j=1}^{d}\sum_{k=1}^{d}|u_{j}u_{k}||g_{jk}(\bx)-g_{jk}(\by)|\\
 & \leq  \max_{\bu:\norm{\bu}{2}=1}\sum_{j=1}^{d}\sum_{k=1}^{d}|u_{j}u_{k}\Sigma_{jk}|\left( |x_{j}-y_{j}| + |x_{k}-y_{k}|\right)\\
 & \leq 2 \max_{\bu:\norm{\bu}{2}=1}\sum_{j=1}^{d}\sum_{k=1}^{d}|u_{j}u_{k}\Sigma_{jk}(x_{j}-y_{j})|\\
 & \leq 2 \max_{\bu:\norm{\bu}{2}=1} \sum_{j=1}^{d}|u_{j}(x_{j}-y_{j})|\max_{j}\sum_{k}|u_{k}\Sigma_{jk}|\\
 & \leq 2 \norm{\bSigma}{(2,\infty)}\norm{\bx-\by}{2}. 
\end{align*}
Recall that $\matX = (\bX_{1},\cdots,\bX_{n})^{T} \in \Re^{n\times d}$ with iid $\bX_i\sim N(0,\bSigma)$, 
so that the matrix $\bZ = \bX\bSigma^{-1/2}$ has iid $N(0,1)$ entries. 
Since $\bX_{i}$ are iid vectors, we may write $\matM_{G}= \bbE\matG(\bX_1)$. 
Let $\bZ_i = \bSigma^{-1/2}\bX_i$. Define a function $f:\Re^{n\times d} \rightarrow \Re$ by
\begin{align*}
 f(\matZ) = \norm{(\bbEn-\bbE)\matG}{S} 
 = \Big\| \dfrac{1}{n}\sum^{n}_{i=1}\left\{\matG(\bSigma^{1/2}\bZ_i)-\matM_{G}\right\}\Big\|_{S}. 
\end{align*}
For matrices $\matV=(\bV_{1},\cdots,\bV_{n})^{T}$ and $\matW=(\bW_{1},\cdots,\bW_{n})^{T}$ in $\Re^{n\times d}$, 
we have
\begin{align*} 
\left|f(\matV)-f(\matW)\right|  & =  \left|\ \norm{\dfrac{1}{n}\sum^{n}_{i=1}\matG(\bSigma^{1/2}\bV_{i}) - \matM_{G}}{S} - \norm{\dfrac{1}{n}\sum^{n}_{i=1}\matG(\bSigma^{1/2}\bW_{i}) - \matM_{G}}{S}\right|\\
&  \leq \dfrac{1}{n}\sum^{n}_{i=1}\norm{\matG(\bSigma^{1/2}\bV_{i}) - \matG(\bSigma^{1/2}\bW_{i})}{S} \\
& \leq 2\norm{\bSigma}{(2,\infty)}\dfrac{1}{n}\sum^{n}_{i=1}\norm{\bSigma^{1/2}\bV_{i}-\bSigma^{1/2}\bW_{i}}{2}\\
& \leq 2\dfrac{\norm{\bSigma}{(2,\infty)}\norm{\bSigma}{S}^{1/2}}{\sqrt{n}}\norm{\matV-\matW}{F}. 
\end{align*}
We have here a Lipschitz continuity in $nd$ variables. 
An application of the concentration inequality for Lipschitz continuous functions yields that for any $t >0$
\begin{equation*}
\bbP\left(f(\matZ) - \bbE f(\matZ) > 2\norm{\bSigma}{(2,\infty)}\norm{\bSigma}{S}^{1/2}\dfrac{t}{\sqrt{n}}\right) 
\leq \exp\left\{-t^{2}/2\right\}
\end{equation*}
with $f(\matZ) =\norm{(\bbEn-\bbE)\matG}{S}  = \|\bDelone-\bDelzero\|_{S}$. 
From (\ref{Delta-01-bd}) it follows that
\begin{align*}
\bbE^{2}f(\matZ) \leq \bbE \norm{\bDelone-\bDelzero}{S}^{2} \leq   C_1^2\sum_{j\neq k}\frac{\Sigma_{jk}^2}{n}+\frac{4d}{45 n} \leq \dfrac{C^{2}_{1}\norm{\bSigma}{F}^{2}-2d}{n}, 
\end{align*}
where $C_1= 2/\pi+1\le 2$ for Kendall's tau and $C_1\le 1+\sqrt{8}/\pi\le 2$ for Spearman's rho, 
with $C^{2}_{1}\geq 2+4/45$. 
\end{proof}

\begin{proof}[Proof of Lemma \ref{lm-exp-Delta-two}] 
By Lemmas \ref{lem:kernel} and \ref{lm-kernel-r}, 
$\big(\matU_{n} - \bbE \matU_{n}\big)_{jk}$ are U-statistics of order $m$ 
and their kernels are uniformly bounded by 1, where $m=2$ for Kendall's tau and $m=3$ for Spearman's rho. 
Let $\matD=(D_{jk})_{d\times d}$ with $D_{jk}= \big(\matU_{n} - \bbE \matU_{n}-m\bDelone\big)_{jk}$. 
Since $m\bDelone$ is the first order Hoeffding decomposition of 
$\big(\matU_{n} - \bbE \matU_{n}\big)_{jk}$, 
$D_{jk}$ is second order degenerate. 
Thus, by \cite{AG93}, $\bbP\big\{ \big|D_{jk}\big| > Ct/n\big\} \le 4e^{-t}$ for a certain numerical constant $C$. 
This gives 
$\bbP\Big\{ \norm{\matD}{\max} > Ct/n\Big\} \le 4d^2e^{-t}$. 
Because $\max_{|A|\leq s} \norm{\matD_{A\times A}}{S}\le s\norm{\matD}{\max}$, 
choosing $t = s(2\log 2d+t)$ completes the proof.
\end{proof} 

\begin{proof}[Proof of Lemma \ref{lm8}] 
We prove part (ii) only as part (i) can be found in \cite{WZ13}. 
Let $x=(\pi/6)\bbE\hrho_{jk}$, $y =(\pi/6)\Deltajkr$ and $z = (\pi/6)(\bbE\hrho_{jk}-\rho_{jk})$, 
so that $\hSigma_{jk}=2\sin(x+y)$ and $\Sigma_{jk}=2\sin(x-z)$. 
By (\ref{lm-kernel-4}), 
\bes
&& \left |\hSigjkr -\Sigma_{jk} - \cos((\pi/6)\rho_{jk})(\pi/3)\Deltajkr\right| 
\cr &=& 2\Big|\sin(x+y) - \sin(x-z) - y\cos(x-z)\Big|
\cr &\leq& 2|z| + y^2 
\\ \nonumber &\le & \frac{\pi^{2}}{36}|\Deltajkr|^{2}+\frac{\pi |\Sigma_{jk}|}{3(n+1)}. 
\ees
We have $\|(|\Deltajkr|^{2})_{A\times A}\|_S\le s\big\|\bDelta^\rho\big\|_{\max}^2$ and 
$\|(|\Sigma_{jk}|)_{A\times A}\|_S\le \sqrt{s}\|\bSigma\|_{(2,\infty)}$.  
The tail probability bound for $\big\|\bDelta^\rho\big\|_{\max}$ follows 
by applying the union bound to the \cite{Hoeff63} inequality. 
As in \cite{WZ13}, due to $\cos((\pi/6)\rho_{jk})
=\sqrt{1-\Sigma_{jk}^2/4}$, 
\bes
\Big\|\Big(\cos((\pi/6)\rho_{jk})\Deltajkr\Big)_{A\times A}\Big\|_S
\le \sum_{m=0}^\infty\left|{1/2\choose m}\right|4^{-m}\Big\|\bDeltar\Big\|_S. 
\ees
This completes the proof as $\sum_{m=0}^\infty\left|{1/2\choose m}\right|4^{-m}=2-\sqrt{1-1/4}$. 
\end{proof}

\bibliographystyle{imsart-nameyear}
\bibliography{NonParamCorMat}

\begin{thebibliography}{43}

\bibitem[\protect\citeauthoryear{Anderson}{1958}]{Anderson58}
\begin{bbook}[author]
\bauthor{\bsnm{Anderson},~\bfnm{Theodore~Wilbur}\binits{T.~W.}}
(\byear{1958}).
\btitle{{An introduction to multivariate statistical analysis}}
\bvolume{2}.
\bpublisher{Wiley New York}.
\end{bbook}
\endbibitem

\bibitem[\protect\citeauthoryear{Arcones and Gine}{1993}]{AG93}
\begin{barticle}[author]
\bauthor{\bsnm{Arcones},~\bfnm{Miguel~A}\binits{M.~A.}} \AND
  \bauthor{\bsnm{Gine},~\bfnm{Evarist}\binits{E.}}
(\byear{1993}).
\btitle{Limit theorems for U-processes}.
\bjournal{The Annals of Probability}
\bpages{1494--1542}.
\end{barticle}
\endbibitem

\bibitem[\protect\citeauthoryear{Bickel and Levina}{2008a}]{BL08a}
\begin{barticle}[author]
\bauthor{\bsnm{Bickel},~\bfnm{Peter~J}\binits{P.~J.}} \AND
  \bauthor{\bsnm{Levina},~\bfnm{Elizaveta}\binits{E.}}
(\byear{2008}a).
\btitle{Regularized estimation of large covariance matrices}.
\bjournal{The Annals of Statistics}
\bpages{199--227}.
\end{barticle}
\endbibitem

\bibitem[\protect\citeauthoryear{Bickel and Levina}{2008b}]{BL08b}
\begin{barticle}[author]
\bauthor{\bsnm{Bickel},~\bfnm{Peter~J}\binits{P.~J.}} \AND
  \bauthor{\bsnm{Levina},~\bfnm{Elizaveta}\binits{E.}}
(\byear{2008}b).
\btitle{Covariance regularization by thresholding}.
\bjournal{The Annals of Statistics}
\bvolume{36}
\bpages{2577--2604}.
\end{barticle}
\endbibitem

\bibitem[\protect\citeauthoryear{Bickel et~al.}{1993}]{BKRW93}
\begin{bbook}[author]
\bauthor{\bsnm{Bickel},~\bfnm{Peter~J.}\binits{P.~J.}},
  \bauthor{\bsnm{Klaassen},~\bfnm{J.}\binits{J.}},
  \bauthor{\bsnm{Ritov},~\bfnm{YA'Acov}\binits{Y.}} \AND
  \bauthor{\bsnm{Wellner},~\bfnm{Jon~A.}\binits{J.~A.}}
(\byear{1993}).
\btitle{Efficient and adaptive estimation for semiparametric models}.
\bpublisher{Johns Hopkins University Press Baltimore}.
\end{bbook}
\endbibitem

\bibitem[\protect\citeauthoryear{Borell}{1975}]{Borell75}
\begin{barticle}[author]
\bauthor{\bsnm{Borell},~\bfnm{Christer}\binits{C.}}
(\byear{1975}).
\btitle{The Brunn-Minkowski inequality in gauss space}.
\bjournal{Inventiones Mathematicae}
\bvolume{30}
\bpages{207--216}.
\end{barticle}
\endbibitem

\bibitem[\protect\citeauthoryear{Cai and Liu}{2011}]{cailiu11a}
\begin{barticle}[author]
\bauthor{\bsnm{Cai},~\bfnm{Tony}\binits{T.}} \AND
  \bauthor{\bsnm{Liu},~\bfnm{Weidong}\binits{W.}}
(\byear{2011}).
\btitle{Adaptive thresholding for sparse covariance matrix estimation}.
\bjournal{Journal of the American Statistical Association}
\bvolume{106}.
\end{barticle}
\endbibitem

\bibitem[\protect\citeauthoryear{Cai, Ma and Wu}{2013}]{CMW13a}
\begin{barticle}[author]
\bauthor{\bsnm{Cai},~\bfnm{T.~Tony}\binits{T.~T.}},
  \bauthor{\bsnm{Ma},~\bfnm{Zongming}\binits{Z.}} \AND
  \bauthor{\bsnm{Wu},~\bfnm{Yihong}\binits{Y.}}
(\byear{2013}).
\btitle{{Sparse PCA: Optimal rates and adaptive estimation}}.
\bjournal{The Annals of Statistics}
\bvolume{41}
\bpages{3074--3110}.
\end{barticle}
\endbibitem

\bibitem[\protect\citeauthoryear{Cai and Yuan}{2012}]{CY12}
\begin{barticle}[author]
\bauthor{\bsnm{Cai},~\bfnm{T.~Tony}\binits{T.~T.}} \AND
  \bauthor{\bsnm{Yuan},~\bfnm{Ming}\binits{M.}}
(\byear{2012}).
\btitle{{Adaptive covariance matrix estimation through block thresholding}}.
\bjournal{The Annals of Statistics}
\bvolume{40}
\bpages{2014--2042}.
\end{barticle}
\endbibitem

\bibitem[\protect\citeauthoryear{Cai, Zhang and Zhou}{2010}]{CZZ10}
\begin{barticle}[author]
\bauthor{\bsnm{Cai},~\bfnm{T~Tony}\binits{T.~T.}},
  \bauthor{\bsnm{Zhang},~\bfnm{Cun-Hui}\binits{C.-H.}} \AND
  \bauthor{\bsnm{Zhou},~\bfnm{Harrison~H}\binits{H.~H.}}
(\byear{2010}).
\btitle{Optimal rates of convergence for covariance matrix estimation}.
\bjournal{The Annals of Statistics}
\bvolume{38}
\bpages{2118--2144}.
\end{barticle}
\endbibitem

\bibitem[\protect\citeauthoryear{Cai and Zhou}{2012}]{CaiZhou12}
\begin{barticle}[author]
\bauthor{\bsnm{Cai},~\bfnm{T.~T.}\binits{T.~T.}} \AND
  \bauthor{\bsnm{Zhou},~\bfnm{H.~H.}\binits{H.~H.}}
(\byear{2012}).
\btitle{Minimax estimation of large covariance matrices under $\ell_{1}$ norm}.
\bjournal{Statistica Sinica}
\bvolume{22}
\bpages{1319}.
\end{barticle}
\endbibitem

\bibitem[\protect\citeauthoryear{d'Aspremont et~al.}{2007}]{DALJL07}
\begin{barticle}[author]
\bauthor{\bsnm{d'Aspremont},~\bfnm{Alexandre}\binits{A.}},
  \bauthor{\bsnm{El~Ghaoui},~\bfnm{Laurent}\binits{L.}},
  \bauthor{\bsnm{Jordan},~\bfnm{Michael~I}\binits{M.~I.}} \AND
  \bauthor{\bsnm{Lanckriet},~\bfnm{Gert~RG}\binits{G.~R.}}
(\byear{2007}).
\btitle{A direct formulation for sparse PCA using semidefinite programming}.
\bjournal{SIAM review}
\bvolume{49}
\bpages{434--448}.
\end{barticle}
\endbibitem

\bibitem[\protect\citeauthoryear{Davidson and Szarek}{2001}]{DSz01}
\begin{barticle}[author]
\bauthor{\bsnm{Davidson},~\bfnm{Kenneth~R}\binits{K.~R.}} \AND
  \bauthor{\bsnm{Szarek},~\bfnm{Stanislaw~J}\binits{S.~J.}}
(\byear{2001}).
\btitle{Local operator theory, random matrices and Banach spaces}.
\bjournal{Handbook of the geometry of Banach spaces}
\bvolume{1}
\bpages{317--366}.
\end{barticle}
\endbibitem

\bibitem[\protect\citeauthoryear{Davis and Kahan}{1970}]{DK70}
\begin{barticle}[author]
\bauthor{\bsnm{Davis},~\bfnm{Chandler}\binits{C.}} \AND
  \bauthor{\bsnm{Kahan},~\bfnm{W.~M.}\binits{W.~M.}}
(\byear{1970}).
\btitle{{The Rotation of Eigenvectors by a Perturbation. III}}.
\bjournal{SIAM Journal on Numerical Analysis}
\bvolume{7}
\bpages{1--46}.
\bdoi{10.1137/0707001}
\end{barticle}
\endbibitem

\bibitem[\protect\citeauthoryear{H{\'a}jek}{1968}]{Hajek68}
\begin{barticle}[author]
\bauthor{\bsnm{H{\'a}jek},~\bfnm{Jaroslav}\binits{J.}}
(\byear{1968}).
\btitle{Asymptotic normality of simple linear rank statistics under
  alternatives}.
\bjournal{The Annals of Mathematical Statistics}
\bpages{325--346}.
\end{barticle}
\endbibitem

\bibitem[\protect\citeauthoryear{H{\'a}jek, {\v{S}}id{\'a}k and
  Sen}{1967}]{HSS67}
\begin{bbook}[author]
\bauthor{\bsnm{H{\'a}jek},~\bfnm{Jaroslav}\binits{J.}},
  \bauthor{\bsnm{{\v{S}}id{\'a}k},~\bfnm{Zbyn{\v{e}}k}\binits{Z.}} \AND
  \bauthor{\bsnm{Sen},~\bfnm{Pranab~Kumar}\binits{P.~K.}}
(\byear{1967}).
\btitle{Theory of rank tests}.
\bpublisher{Academic press New York}.
\end{bbook}
\endbibitem

\bibitem[\protect\citeauthoryear{Han and Liu}{2013}]{LH13a}
\begin{barticle}[author]
\bauthor{\bsnm{Han},~\bfnm{Fang}\binits{F.}} \AND
  \bauthor{\bsnm{Liu},~\bfnm{Han}\binits{H.}}
(\byear{2013}).
\btitle{{Optimal Rates of Convergence for Latent Generalized Correlation Matrix
  Estimation in Transelliptical Distribution}}.
\bjournal{arXiv preprint arXiv:1305.6916}
\bpages{34}.
\end{barticle}
\endbibitem

\bibitem[\protect\citeauthoryear{Hoeffding}{1948}]{Hoeff48}
\begin{barticle}[author]
\bauthor{\bsnm{Hoeffding},~\bfnm{Wassily}\binits{W.}}
(\byear{1948}).
\btitle{A class of statistics with asymptotically normal distribution}.
\bjournal{The Annals of Mathematical Statistics}
\bvolume{19}
\bpages{293--325}.
\end{barticle}
\endbibitem

\bibitem[\protect\citeauthoryear{Hoeffding}{1963}]{Hoeff63}
\begin{barticle}[author]
\bauthor{\bsnm{Hoeffding},~\bfnm{Wassily}\binits{W.}}
(\byear{1963}).
\btitle{Probability inequalities for sums of bounded random variables}.
\bjournal{Journal of the American statistical association}
\bvolume{58}
\bpages{13--30}.
\end{barticle}
\endbibitem

\bibitem[\protect\citeauthoryear{Johnstone and Lu}{2009}]{JL09}
\begin{barticle}[author]
\bauthor{\bsnm{Johnstone},~\bfnm{Iain~M}\binits{I.~M.}} \AND
  \bauthor{\bsnm{Lu},~\bfnm{Arthur~Yu}\binits{A.~Y.}}
(\byear{2009}).
\btitle{On consistency and sparsity for principal components analysis in high
  dimensions}.
\bjournal{Journal of the American Statistical Association}
\bvolume{104}.
\end{barticle}
\endbibitem

\bibitem[\protect\citeauthoryear{Jolliffe, Trendafilov and Uddin}{2003}]{JTU03}
\begin{barticle}[author]
\bauthor{\bsnm{Jolliffe},~\bfnm{Ian~T}\binits{I.~T.}},
  \bauthor{\bsnm{Trendafilov},~\bfnm{Nickolay~T}\binits{N.~T.}} \AND
  \bauthor{\bsnm{Uddin},~\bfnm{Mudassir}\binits{M.}}
(\byear{2003}).
\btitle{A modified principal component technique based on the LASSO}.
\bjournal{Journal of Computational and Graphical Statistics}
\bvolume{12}
\bpages{531--547}.
\end{barticle}
\endbibitem

\bibitem[\protect\citeauthoryear{Karoui}{2008}]{Karoui08}
\begin{barticle}[author]
\bauthor{\bsnm{Karoui},~\bfnm{Noureddine~El}\binits{N.~E.}}
(\byear{2008}).
\btitle{Operator norm consistent estimation of large-dimensional sparse
  covariance matrices}.
\bjournal{The Annals of Statistics}
\bpages{2717--2756}.
\end{barticle}
\endbibitem

\bibitem[\protect\citeauthoryear{Kendall}{1938}]{Kendall38}
\begin{barticle}[author]
\bauthor{\bsnm{Kendall},~\bfnm{Maurice~G}\binits{M.~G.}}
(\byear{1938}).
\btitle{A new measure of rank correlation}.
\bjournal{Biometrika}
\bvolume{30}
\bpages{81--93}.
\end{barticle}
\endbibitem

\bibitem[\protect\citeauthoryear{Kendall}{1948}]{Kendall48}
\begin{barticle}[author]
\bauthor{\bsnm{Kendall},~\bfnm{Maurice~George}\binits{M.~G.}}
(\byear{1948}).
\btitle{Rank correlation methods.}
\end{barticle}
\endbibitem

\bibitem[\protect\citeauthoryear{Kruskal}{1958}]{Kruskal58}
\begin{barticle}[author]
\bauthor{\bsnm{Kruskal},~\bfnm{William~H}\binits{W.~H.}}
(\byear{1958}).
\btitle{Ordinal measures of association}.
\bjournal{Journal of the American Statistical Association}
\bvolume{53}
\bpages{814--861}.
\end{barticle}
\endbibitem

\bibitem[\protect\citeauthoryear{Lam and Fan}{2009}]{LamFan09}
\begin{barticle}[author]
\bauthor{\bsnm{Lam},~\bfnm{Clifford}\binits{C.}} \AND
  \bauthor{\bsnm{Fan},~\bfnm{Jianqing}\binits{J.}}
(\byear{2009}).
\btitle{Sparsistency and rates of convergence in large covariance matrix
  estimation}.
\bjournal{Annals of statistics}
\bvolume{37}
\bpages{4254}.
\end{barticle}
\endbibitem

\bibitem[\protect\citeauthoryear{Liu, Han and Zhang}{2012}]{LHZ12}
\begin{binproceedings}[author]
\bauthor{\bsnm{Liu},~\bfnm{Han}\binits{H.}},
  \bauthor{\bsnm{Han},~\bfnm{Fang}\binits{F.}} \AND
  \bauthor{\bsnm{Zhang},~\bfnm{Cun-hui}\binits{C.-h.}}
(\byear{2012}).
\btitle{Transelliptical graphical models}.
In \bbooktitle{Advances in Neural Information Processing Systems}
\bpages{809--817}.
\end{binproceedings}
\endbibitem

\bibitem[\protect\citeauthoryear{Liu, Lafferty and Wasserman}{2009}]{LLW09}
\begin{barticle}[author]
\bauthor{\bsnm{Liu},~\bfnm{Han}\binits{H.}},
  \bauthor{\bsnm{Lafferty},~\bfnm{John}\binits{J.}} \AND
  \bauthor{\bsnm{Wasserman},~\bfnm{Larry}\binits{L.}}
(\byear{2009}).
\btitle{The nonparanormal: Semiparametric estimation of high dimensional
  undirected graphs}.
\bjournal{The Journal of Machine Learning Research}
\bvolume{10}
\bpages{2295--2328}.
\end{barticle}
\endbibitem

\bibitem[\protect\citeauthoryear{Liu et~al.}{2012}]{LHYLW12}
\begin{barticle}[author]
\bauthor{\bsnm{Liu},~\bfnm{H.}\binits{H.}},
  \bauthor{\bsnm{Han},~\bfnm{F.}\binits{F.}},
  \bauthor{\bsnm{Yuan},~\bfnm{M.}\binits{M.}},
  \bauthor{\bsnm{Lafferty},~\bfnm{J.}\binits{J.}} \AND
  \bauthor{\bsnm{Wasserman},~\bfnm{L.}\binits{L.}}
(\byear{2012}).
\btitle{High dimensional semiparametric gaussian copula graphical models}.
\bjournal{arXiv preprint arXiv:1202.2169}.
\end{barticle}
\endbibitem

\bibitem[\protect\citeauthoryear{Ma}{2013}]{Ma13}
\begin{barticle}[author]
\bauthor{\bsnm{Ma},~\bfnm{Zongming}\binits{Z.}}
(\byear{2013}).
\btitle{Sparse principal component analysis and iterative thresholding}.
\bjournal{The Annals of Statistics}
\bvolume{41}
\bpages{772--801}.
\end{barticle}
\endbibitem

\bibitem[\protect\citeauthoryear{Serfling}{2009}]{Serfling09}
\begin{bbook}[author]
\bauthor{\bsnm{Serfling},~\bfnm{Robert~J}\binits{R.~J.}}
(\byear{2009}).
\btitle{Approximation Theorems of Mathematical Statistics}
\bvolume{162}.
\bpublisher{Wiley-Interscience}.
\end{bbook}
\endbibitem

\bibitem[\protect\citeauthoryear{Sklar}{1959}]{Sklar59}
\begin{barticle}[author]
\bauthor{\bsnm{Sklar},~\bfnm{Abe}\binits{A.}}
(\byear{1959}).
\btitle{Fonctions de r\'epartition \`a $n$ dimensions e leurs marges}.
\bjournal{Publications de l'Institut de Statistique de l'Universit\'e de Paris}
\bvolume{8}
\bpages{229--231}.
\end{barticle}
\endbibitem

\bibitem[\protect\citeauthoryear{Tropp}{2011}]{Tropp11b}
\begin{barticle}[author]
\bauthor{\bsnm{Tropp},~\bfnm{J.~A.}\binits{J.~A.}}
(\byear{2011}).
\btitle{User-friendly tail bounds for sums of random matrices}.
\bjournal{Found. Comput. Math. doi:10.1007/s10208-011-9099-z.}
\end{barticle}
\endbibitem

\bibitem[\protect\citeauthoryear{Van~der Vaart}{2000}]{Van00}
\begin{bbook}[author]
\bauthor{\bparticle{Van~der} \bsnm{Vaart},~\bfnm{A.~W.}\binits{A.~W.}}
(\byear{2000}).
\btitle{Asymptotic Statistics}
\bvolume{3}.
\bpublisher{Cambridge university press}.
\end{bbook}
\endbibitem

\bibitem[\protect\citeauthoryear{Vershynin}{2010}]{Ver10}
\begin{barticle}[author]
\bauthor{\bsnm{Vershynin},~\bfnm{Roman}\binits{R.}}
(\byear{2010}).
\btitle{Introduction to the non-asymptotic analysis of random matrices}.
\bjournal{arXiv preprint arXiv:1011.3027}.
\end{barticle}
\endbibitem

\bibitem[\protect\citeauthoryear{Vu and Lei}{2012}]{VL12}
\begin{barticle}[author]
\bauthor{\bsnm{Vu},~\bfnm{Vincent~Q}\binits{V.~Q.}} \AND
  \bauthor{\bsnm{Lei},~\bfnm{Jing}\binits{J.}}
(\byear{2012}).
\btitle{Minimax rates of estimation for sparse pca in high dimensions}.
\bjournal{arXiv preprint arXiv:1202.0786}.
\end{barticle}
\endbibitem

\bibitem[\protect\citeauthoryear{Wang, Han and Liu}{2013}]{WHL13}
\begin{barticle}[author]
\bauthor{\bsnm{Wang},~\bfnm{Zhaoran}\binits{Z.}},
  \bauthor{\bsnm{Han},~\bfnm{Fang}\binits{F.}} \AND
  \bauthor{\bsnm{Liu},~\bfnm{Han}\binits{H.}}
(\byear{2013}).
\btitle{Sparse Principal Component Analysis for High Dimensional Multivariate
  Time Series}.
\bjournal{Journal of Machine Learning Research (AISTATS Track)}.
\end{barticle}
\endbibitem

\bibitem[\protect\citeauthoryear{Wegkamp and Zhao}{2013}]{WZ13}
\begin{barticle}[author]
\bauthor{\bsnm{Wegkamp},~\bfnm{Marten}\binits{M.}} \AND
  \bauthor{\bsnm{Zhao},~\bfnm{Yue}\binits{Y.}}
(\byear{2013}).
\btitle{{Adaptive estimation of the copula correlation matrix for
  semiparametric elliptical copulas}}.
\bjournal{arXiv preprint arXiv:1305.6526}.
\end{barticle}
\endbibitem

\bibitem[\protect\citeauthoryear{Weyl}{1912}]{Weyl12}
\begin{barticle}[author]
\bauthor{\bsnm{Weyl},~\bfnm{H}\binits{H.}}
(\byear{1912}).
\btitle{Das asymptotische Verteilungsgesetz der Eigenwerte linearer partieller
  Differentialgleichungen}.
\bjournal{Mathematische Annalen}
\bvolume{71}
\bpages{441--479}.
\end{barticle}
\endbibitem

\bibitem[\protect\citeauthoryear{Wu and Pourahmadi}{2003}]{WuP03}
\begin{barticle}[author]
\bauthor{\bsnm{Wu},~\bfnm{Wei~Biao}\binits{W.~B.}} \AND
  \bauthor{\bsnm{Pourahmadi},~\bfnm{Mohsen}\binits{M.}}
(\byear{2003}).
\btitle{Nonparametric estimation of large covariance matrices of longitudinal
  data}.
\bjournal{Biometrika}
\bvolume{90}
\bpages{831--844}.
\end{barticle}
\endbibitem

\bibitem[\protect\citeauthoryear{Xue and Zou}{2012a}]{XZ12}
\begin{barticle}[author]
\bauthor{\bsnm{Xue},~\bfnm{Lingzhou}\binits{L.}} \AND
  \bauthor{\bsnm{Zou},~\bfnm{Hui}\binits{H.}}
(\byear{2012}a).
\btitle{Regularized rank-based estimation of high-dimensional nonparanormal
  graphical models}.
\bjournal{The Annals of Statistics}
\bvolume{40}
\bpages{2541--2571}.
\end{barticle}
\endbibitem

\bibitem[\protect\citeauthoryear{Xue and Zou}{2012b}]{XZ12a}
\begin{barticle}[author]
\bauthor{\bsnm{Xue},~\bfnm{Lingzhou}\binits{L.}} \AND
  \bauthor{\bsnm{Zou},~\bfnm{Hui}\binits{H.}}
(\byear{2012}b).
\btitle{Rank-based Tapering Estimation of Bandable Correlation Matrices}.
\end{barticle}
\endbibitem

\bibitem[\protect\citeauthoryear{Zou, Hastie and Tibshirani}{2006}]{ZHT06}
\begin{barticle}[author]
\bauthor{\bsnm{Zou},~\bfnm{Hui}\binits{H.}},
  \bauthor{\bsnm{Hastie},~\bfnm{Trevor}\binits{T.}} \AND
  \bauthor{\bsnm{Tibshirani},~\bfnm{Robert}\binits{R.}}
(\byear{2006}).
\btitle{Sparse principal component analysis}.
\bjournal{Journal of computational and graphical statistics}
\bvolume{15}
\bpages{265--286}.
\end{barticle}
\endbibitem

\end{thebibliography}

\end{document}